\newtheorem{theorem}{Theorem}[section]
\newtheorem{lemma}[theorem]{Lemma}
\newtheorem{proposition}[theorem]{Proposition}
\theoremstyle{definition}
\newtheorem{assumption}{Assumption}[section]
\theoremstyle{remark}
\newtheorem{remark}{\bf Remark}[section]
\definecolor{darkred}{rgb}{.7,0,0}
\definecolor{green}{rgb}{0,0.7,0}
\def\R{\mathbb{R}}
\def\d{\,{\rm d}}
\def\le{\leqslant}
\def\ge{\geqslant}
\def\Omega{\varOmega}
\def\Lambda{\varLambda}
\def\sh{\mathring S_h}
\def\uh{\check{u}_h}
\def\vh{\check{v}_h}
\def\xh{\check{\chi}_h}
\def\K{{\mathscr K}}
\title{
Weak discrete maximum principle of isoparametric\\ finite element methods 
in curvilinear polyhedra}
\author[Buyang Li]{Buyang Li}
\address{Buyang Li. Department of Applied Mathematics,
The Hong Kong Polytechnic University, Hung Hom, Hong Kong.}
\email {\href{mailto:buyang.li@polyu.edu.hk}{buyang.li{\it @}polyu.edu.hk}}
\author[Weifeng Qiu]{Weifeng Qiu}
\address{Weifeng Qiu. Department of Mathematics,
City University of Hong Kong, Hung Hom, Hong Kong.}
\email {\href{mailto:weifeqiu@cityu.edu.hk}{weifeqiu@cityu.edu.hk}}
\author[Yupei Xie]{Yupei Xie}
\address{Yupei Xie. 
\indent Department of Applied Mathematics,
The Hong Kong Polytechnic University, Hung Hom, Hong Kong.}
\email {\href{mailto:yupei.xie@polyu.edu.hk}{yupei.xie@polyu.edu.hk}}
\author[Wenshan Yu]{Wenshan Yu}
\address{Wenshan Yu. Division of Science and Technology, United International College (BNU-HKBU), Zhuhai, 519087, P.R. China.}
\email {\href{mailto:yuwenshan@uic.edu.cn}{yuwenshan@uic.edu.cn}}
\thanks{This work is partially supported by the Research Grants Council of the Hong Kong Special Administrative Region, China (GRF Project No. PolyU15300519, CityU 11302219), and an internal grant at The Hong Kong Polytechnic University (Project ID: P0038843, Work Programme: ZVX7).
}
\begin{document}

\maketitle

\begin{abstract}
\small


The weak maximum principle of the isoparametric finite element method is proved for the Poisson equation under the Dirichlet boundary condition in a (possibly concave) curvilinear polyhedral domain with edge openings smaller than $\pi$, which include smooth domains and smooth deformations of convex polyhedra. The proof relies on the analysis of a dual elliptic problem with a discontinuous coefficient matrix arising from the isoparametric finite elements. Therefore, the standard $H^2$ elliptic regularity which is required in the proof of the weak maximum principle in the literature does not hold for this dual problem. To overcome this difficulty, we have decomposed the solution into a smooth part and a nonsmooth part, and estimated the two parts by $H^2$ and $W^{1,p}$ estimates, respectively. 

As an application of the weak maximum principle, we have proved a maximum-norm best approximation property of the isoparametric finite element method for the Poisson equation in a curvilinear polyhedron. The proof contains non-trivial modifications of Schatz's argument due to the non-conformity of the iso-parametric finite elements, which requires us to construct a globally smooth flow map which maps the curvilinear polyhedron to a perturbed larger domain on which we can establish the $W^{1,\infty}$ regularity estimate of the Poisson equation uniformly with respect to the perturbation.

\end{abstract}

\pagestyle{myheadings}
\markboth{}{}

\section{Introduction}

Let $\Omega$ be a bounded domain in $\R^N$ with $N\in\{2,3\}$ and consider a quasi-uniform triangulation of the domain $\Omega$ with mesh size $h$, denoted by $\K_h$. 
Hence, $\Omega_h = (\bigcup_{K\in\K_h}K)^\circ$ is an approximation of $\Omega$. 
Let $S_h(\Omega_h)$ be a finite element space subject to the triangulation $\K_h$, and denote by $\mathring S_h(\Omega_h)=\{v_h\in S_h(\Omega_h):v_h=0\,\,\mbox{on}\,\,\partial\Omega_h\}$ the finite element subspace under the homogeneous boundary condition. 
A function $u_h\in S_h(\Omega_h)$ is called discrete harmonic if it satisfies
\begin{align}\label{discrete-harmonic}
\int_{\Omega_h}\nabla u_h\cdot \nabla \chi_h=0\quad\forall\,\chi_h\in\mathring S_h(\Omega_h).
\end{align}
For a given mesh and finite element space, if all the discrete harmonic functions satisfy the following inequality: 
\begin{align}\label{max-principle}
\left\|u_{h}\right\|_{L^{\infty}(\Omega_h)} \leqslant \left\|u_{h}\right\|_{L^{\infty}(\partial \Omega_h)} , 
\end{align}
then it is said that the {\it discrete maximum principle} holds.

The discrete maximum principle of finite element methods (FEMs) has attracted much attention from numerical analysts due to its importance for the stability and accuracy of numerical solutions; for example, see \cite{Ciarlet_1970, CiarletRaviart_1973, Ruas Santos_1982, Vanselow_2001, WangZhang_2012}. However, strong restrictions on the geometry of the mesh are required for the discrete maximum principle to hold. 
For example, for piecewise linear finite elements on a two-dimensional triangular mesh, the discrete maximum principle generally requires the angles of the triangles to be less than $\pi/2$; see \cite[\textsection 5]{WangZhang_2012}.
In three dimensions, it is hard to have such discrete maximum principe even for piecewise linear finite elements; see \cite{Brandts_Korotov_Krizek_2009, Korotov_Krizek_2001, Korotov_Krizek_Pekka_2001, Xu_Zikatanov_1999}.

Schatz considered a different approach in \cite{Schatz-1980} by proving the weak maximum principle (also called the Agmon--Miranda maximum principle) ,
\begin{align}\label{weak-max-principle}
\left\|u_{h}\right\|_{L^{\infty}(\Omega_h)} \leqslant C\left\|u_{h}\right\|_{L^{\infty}(\partial \Omega_h)} ,
\end{align} 
for some constant $C$ which is independent of $u_h$ and $h$, 
for a wide class of $H^1$-conforming finite elements on a general quasi-uniform triangulation of a two-dimensional polygonal domain. It was shown in \cite{Schatz-1980} that the weak maximum principle can be used to prove the maximum-norm stability and best approximation results of FEMs in a plane polygonal domain, i.e.,
\begin{equation}\label{Rhu-Linfty}
\|u-R_hu\|_{L^\infty(\Omega)} \le C\ell_h \inf_{v_h\in \mathring S_h}\|u-v_h\|_{L^\infty(\Omega)}
\quad\forall\, u\in H^1_0(\Omega)\cap L^\infty(\Omega) ,
\end{equation}  
where $R_h:H^1_0(\Omega)\rightarrow \mathring S_h$ is the Ritz projection operator, and 
$$
\ell_h=
\left\{\begin{aligned}
&\ln(2+1/h) &&\mbox{for piecewise linear elements},\\
&\,1 &&\mbox{for higher-order finite elements}.
\end{aligned}\right.
$$ 
Such maximum-norm stability and best approximation results have a number of applications in the error estimates of finite element solutions for parabolic problems \cite{2019-Li, 2021-Li, Kashiwabara-Kemmochi-1, Leykekhman_Vexler_2016b},  Stokes systems \cite{Behringer_Leykekhman_Vexler_2019}, nonlinear problems \cite{Frehse_Rannacher_1978, Demlow_2006, Meinder_Vexler_2016b}, optimal control problems \cite{Apel_Rosch_Sirch_2009, Apel_Winkler_Pfefferer_2018}, and so on. 


In three dimensions, the weak maximum principle was extended to convex polyhedral domains in \cite{Leykekhman_Li_2021} and used to prove the $L^\infty$-norm stability and best approximation results of FEMs on convex polyhedral domains, removing an extra logarithmic factor $\ln(2+1/h)$ in the stability constant for quadratic and higher-order elements obtained in other approaches (for example, see \cite{Leykekhman_Vexler_2016}). 
When $\Omega$ is a smooth domain and $\Omega_h=\Omega$ (the triangulation is assumed to match the curved boundary exactly), the weak maximum principle of quadratic or higher-order FEMs is a result of the maximum-norm stability result in \cite{Schatz-Wahlbin-1977, Schatz-Wahlbin-1982}, and the weak maximum principle of linear finite elements can be proved similarly as in \cite{Leykekhman_Li_2021}. In all these articles, the triangulation is assumed to match the boundary of the domain exactly, with $\Omega_h=\Omega$.

%
%

In the practical computation, the curved boundary of a bounded smooth domain, or more generally a curvilinear polygon or polyhedron (which may contain both curved faces, curved edges, and corners), is generally approximated by isoparametric finite elements instead of being matched exactly by the triangulation. In this case, the weak maximum principle of FEMs has not been proved yet. Correspondingly, the best approximation results such as \eqref{Rhu-Linfty} are not known for isoparametric FEMs in a curved domain. 

Some related results have been proved in the case $\Omega_h\neq\Omega$. For the Poisson equation with Dirichlet boundary conditions in convex smooth domains, the piecewise linear finite element space with a zero extension in $\Omega\backslash \Omega_h$ is conforming, i.e., $S_{h}(\Omega_h) \subset H_{0}^{1}(\Omega)$. In this case, pointwise error estimates of FEMs have been established in \cite{Bakaev-Thomee-Wahlbin-2002,Schatz-Wahlbin-1982}. 
For general bounded smooth domains which may be concave, thus the finite element space may be non-conforming, Kashiwabara \& Kemmochi \cite{Kashiwabara-Kemmochi-2} have obtained the following error estimate for piecewise linear finite elements for the Poisson equation under the Neumann boundary condition: 
\begin{align}\label{Error-KK}
\|\tilde{u}-u_{h}\|_{L^{\infty}(\Omega_{h})} 
&\le C h|\log h| \inf_{v_h\in S_h} \|\tilde{u}-v_{h}\|_{W^{1, \infty}(\Omega_{h})}+C h^{2}|\log h|\|u\|_{W^{2, \infty}(\Omega)} , 
\end{align} 
where $\tilde u$ is any extension of $u$ in $W^{2,\infty}(\Omega_\delta)$ and $\Omega_\delta$ is a neighborhood of $\overline\Omega$. 
In the case $u\in W^{2,\infty}(\Omega)$, this error estimate is a consequence of the best approximation result in \eqref{Rhu-Linfty}. More recently, the $W^{1,\infty}$ stability of the Ritz projection was proved in \cite{dorich_23} for isoparametric FEMs on $C^{r+1,1}$-smooth domains based on weighted-norm estimates, where $r$ denotes the degree of finite elements. For curvilinear polyhedra or smooth domains which may be concave, the weak maximum principle and the best approximation results in the $L^\infty$ norm have not been proved. 
 
In this article, we close the gap mentioned above by proving the weak maximum principle in \eqref{weak-max-principle} for isoparametric finite elements of degree $r\ge 1$ in a bounded smooth domain or a curvilinear polyhedron (possibly concave) with edge openings smaller than $\pi$.
As an application of the weak maximum principle, we prove that the finite element solution $u_h\in\mathring S_h(\Omega_h)$ of the Poisson equation 
\begin{align}\label{Poisson}
\left\{
\begin{aligned}
-\Delta u &= f&&\mbox{in}\,\,\,\Omega \\
u &= 0 &&\mbox{on}\,\,\, \partial\Omega 
\end{aligned}\right. 
\end{align}
using isoparametric finite elements of degree $r\ge 1$ has the following optimal-order error bound (for any $p>N$):
\begin{align}\label{error-estimate}
 \|u-u_h\|_{L^\infty(\Omega)}
 \le C\ell_h\|u-\check{I}_hu\|_{L^\infty(\Omega)} + Ch^{r+1}\ell_h\|f\|_{L^p(\Omega)} , 
\end{align}
where $u_h$ is extended to be zero in $\Omega\backslash\Omega_h$, and $\check{I}_hu$ denotes a Lagrange interpolation operator (which will be defined in the next section). 
Inequality \eqref{error-estimate} can be viewed as a variant of the best approximation result in \eqref{Rhu-Linfty} by taking account of the geometry change of the domain, which produces an additional optimal-order term $Ch^{r+1}\|f\|_{L^p(\Omega)} $ independent of the higher regularity of $f$. 
In particular, inequality \eqref{error-estimate} implies the following error estimate: 
\begin{align}\label{error-estimate2}
 \|u-u_h\|_{L^\infty(\Omega)}
 \le C\ell_h h^s \|u\|_{C^s(\overline\Omega)} + Ch^{r+1}\ell_h\|f\|_{L^p(\Omega)} 
\quad \mbox{for}\,\,\, u\in C^s(\overline\Omega),\,\,\, 0\le s\le r+1,
\end{align}
which adapts to the regularity of $u$. 

The weak maximum principle is proved by converting the finite element weak form on $\Omega_h$ to a weak form on $\Omega$ by using a bijective transformation $\Phi_h:\Omega_h\rightarrow\Omega$ which is piecewisely defined on the triangles/tetrahedra. This yields a bilinear form with a discontinuous coefficient matrix. 
The main technical difficulty is that the elliptic partial differential equation associated to this coefficient matrix does not have the $H^2$ regularity estimate, which is required in the proof of weak maximum principle in the literature; see \cite{Leykekhman_Li_2021}. 
We overcome this difficulty by decomposing the finite element solution $v_h$ (of a duality problem) into two parts, 
$v_h=v_{h,1}+v_{h,2}$, with $v_{h,1}$ corresponding to the Poisson equation with $H^2$ regularity, and $v_{h,2}$ corresponding to an elliptic equation with discontinuous coefficients but with a small source term arising from the geometry perturbation, 
and then estimate the two parts separately by using the $H^2$ and $W^{1,p}$ regularity of the respective problems. 

The maximum-norm error estimate is proved by using Schatz argument through estimating the difference between the solutions of the Poisson equations in $\Omega_h$ and $\Omega$. However, in order to avoid using the partial derivatives of $f$ in the proof of \eqref{error-estimate}, we have to estimate the error between the solutions of the Poisson equation in the two domains $\Omega_h$ and $\Omega$ under the Dirichlet boundary conditions, respectively. This is accomplished by perturbing the curvilinear polyhedron through a globally smooth flow map pointing outward the domain and establishing the $W^{1,\infty}$ regularity estimate of the Poisson equation in a slightly larger perturbed domain $\Omega^t$ (uniformly with respect to the perturbation), which contains both $\Omega_h$ and $\Omega$ and satisfies that ${\rm dist}(x,\partial\Omega) \sim h^{r+1}$ for $x\in\partial\Omega^t$.  

The rest of this article is organized as follows. 
In Section \ref{sec:isopara}, we present the main results to be proved in this article, including the weak maximum principle of the isoparametric FEM in a curvilinear polyhedron, and the best approximation result of finite element solutions in the maximum norm. The proofs of the two main results are presented in Sections \ref{section:3} and \ref{sec: stability Ritz}, respectively. The conclusions are presented in Section \ref{section:conclusion}. 


\section{Main results}\label{sec:isopara}
\setcounter{equation}{0}


%

In this article, we assume that $\Omega\subset\mathbb{R}^N$, with $N\in\{2,3\}$, is either a bounded smooth domain or a curvilinear polyhedron (possibly concave) with edge openings smaller than $\pi$. 
More specifically, in the three-dimensional space, this means that for every $x\in \partial\Omega$ there is a neighborhood $U_x$ and a smooth diffeomorphism $\varphi_{x}:U_x\to B_0(\varepsilon_x)$ mapping $x$ to $0$ such that one of the following three conditions holds: 
\begin{enumerate}
	\item $x$ is a smooth point, i.e.,  $\varphi_{x}(U_x\cap \Omega)= B_0(\varepsilon_x)\cap \R^3_+$, where $\R^3_+=\{x\in\R^3:x_3>0\}$ is a half space in $\R^3$.
	\item $x$ is an edge point, i.e., $\varphi_{x}(U_x\cap \Omega)= B_0(\varepsilon_x)\cap K_x$, where $K_x=\R\times\Sigma$, where $\Sigma\subseteq\mathbb{R}^2$ is a sector with angle less than $\pi$.
	\item 
	$x$ is a vertex point, i.e., $\varphi_{x}(U_x\cap \Omega)= B_0(\varepsilon_x)\cap K_x ,$ 
	where $K_x$ is a convex polyhedral cone with a vertex at $0$. Therefore, the boundary of $K_x$ consists of several smooth faces intersecting at some edges which pass through the vertex $0$. 

\end{enumerate}
We refer to \cite[Definition 2.1]{Ludmi} for the definition of general curvilinear polyhedron. 

%
%
%

Let $\K$ be the set of closed simplices in a quasi-uniform triangulation of the domain $\Omega$ with isoparametric finite elements of degree $r\ge 1$ approximating the boundary $\partial\Omega$, as described in \cite{Lenoir_1986}, with flat interior simplices which have at most one vertex on $\partial\Omega$ and possibly curved boundary simplices. Each boundary simplex contains a possibly curved face or edge interpolating $\partial\Omega$ with an accuracy of $O(h^{r+1})$, where $h$ denotes the mesh size of the triangulation. Hence, $\Omega_h=(\bigcup_{K\in\K}K)^\circ$ is an approximation to $\Omega$ such that ${\rm dist}(x,\partial \Omega)=O(h^{r+1})$ for $x\in \partial\Omega_{h}$.  

We prove the following weak maximum principle of the isoparametric FEM.

\begin{theorem}\label{THM1}
For the isoparametric FEM of degree $r\ge 1$ on a quasi-uniform triangulation of $\Omega$, all the discrete harmonic functions $u_h\in S_h(\Omega_h)$ satisfying \eqref{discrete-harmonic} have the following estimate: 
\begin{align}\label{weak-max-principle}
\left\|u_{h}\right\|_{L^{\infty}(\Omega_h)} \leqslant C\left\|u_{h}\right\|_{L^{\infty}(\partial \Omega_h)} ,
\end{align}
where the constant $C$ is independent of $u_h$ and the mesh size $h$.
\end{theorem}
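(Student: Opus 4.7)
The plan is to follow the duality strategy of Schatz--Wahlbin and \cite{Leykekhman_Li_2021} while coping with the fact that our mesh does not match $\partial\Omega$ exactly. Let $x_{0}\in\overline{\Omega_{h}}$ be a point where $|u_{h}|$ attains its maximum; we may assume it is interior (otherwise there is nothing to prove). I introduce a smooth nonnegative bump $\eta_{h}$ concentrated at $x_{0}$ that reproduces polynomials of degree $\le r$, so that $u_{h}(x_{0})=(u_{h},\eta_{h})_{\Omega_{h}}$, together with the regularized Green's function $g\in H^{1}_{0}(\Omega_{h})$ solving $-\Delta g=\eta_{h}$ and its Ritz projection $g_{h}\in\mathring S_{h}(\Omega_{h})$. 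Using discrete harmonicity \eqref{discrete-harmonic} and the subtraction of an affine function that matches $u_{h}$ on a boundary-adjacent element, the weak maximum principle \eqref{weak-max-principle} is reduced to a Schatz-type bound of the form $\|g\|_{W^{1,1}(\Omega_{h})}+\|\nabla(g-g_{h})\|_{L^{1}(\Omega_{h})}\le C$ with $C$ independent of $h$ and $x_{0}$.

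Next, to analyze $g$ and $g_{h}$ on the exact domain I would pull the problem back to $\Omega$ via the piecewise smooth bijection $\Phi_{h}:\Omega_{h}\to\Omega$ of \cite{Lenoir_1986}, which is the identity on all flat interior simplices. Writing $\tilde g=g\circ\Phi_{h}^{-1}$ and $\tilde\eta_{h}=\eta_{h}\circ\Phi_{h}^{-1}$, the Poisson equation transforms into a variable-coefficient elliptic problem $-\nabla\cdot(A_{h}\nabla\tilde g)=\tilde f_{h}$ in $\Omega$ with zero Dirichlet data, where $A_{h}=I+E_{h}$ is piecewise smooth with interelement jumps, $E_{h}$ is supported in the $O(h)$-thick boundary strip, and $\|E_{h}\|_{L^{\infty}(\Omega)}\to 0$ as $h\to 0$. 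The key obstacle is that $-\nabla\cdot(A_{h}\nabla\cdot)$ does \emph{not} possess $H^{2}$ regularity, which is precisely the property on which the convex-polyhedron argument of \cite{Leykekhman_Li_2021} depends.

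To bypass this I propose the splitting $\tilde g=\tilde g_{1}+\tilde g_{2}$ defined by
\begin{align*}
-\Delta \tilde g_{1} &= \tilde f_{h} \;\;\text{in }\Omega, & \tilde g_{1}&=0 \;\;\text{on }\partial\Omega,\\
-\nabla\cdot(A_{h}\nabla \tilde g_{2}) &= \nabla\cdot(E_{h}\nabla \tilde g_{1}) \;\;\text{in }\Omega, & \tilde g_{2}&=0 \;\;\text{on }\partial\Omega,
\end{align*}
together with the induced splitting of the pulled-back Ritz projection $\tilde g_{h}=\tilde g_{h,1}+\tilde g_{h,2}$. The part $\tilde g_{1}$ solves a constant-coefficient Poisson problem on the curvilinear polyhedron with edge openings $<\pi$, so it inherits the $H^{2}$ and weighted-$H^{2}$ regularity required to run the Schatz weighted-duality analysis for $\tilde g_{1}-\tilde g_{h,1}$ almost verbatim from \cite{Leykekhman_Li_2021}, with the only technical modification being that the approximation space now lives on the curved pulled-back mesh and the isoparametric interpolation estimates of order $h^{r+1}$ absorb into the final constant. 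The remainder $\tilde g_{2}$ satisfies a variable-coefficient equation whose right-hand side is $o(1)$ in $H^{-1}$ because $\|E_{h}\|_{L^{\infty}}\to 0$ and $\tilde g_{1}$ is $H^{2}$-bounded, so a C\'ea estimate in $W^{1,p}$ delivers $\|\nabla(\tilde g_{2}-\tilde g_{h,2})\|_{L^{p}(\Omega)}=o(1)$, which cannot destroy the final estimate.

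The step I expect to dominate the work is the uniform-in-$h$ $W^{1,p}$ regularity estimate with some $p>N$ for the variable-coefficient problem defining $\tilde g_{2}$ on the possibly concave curvilinear polyhedron $\Omega$. The hypothesis that all edge openings are smaller than $\pi$ is exactly what guarantees $W^{1,p}$ regularity with $p>N$ for the constant-coefficient Laplacian on $\Omega$, and a Meyers-type perturbation then transfers this regularity to the discontinuous operator $-\nabla\cdot(A_{h}\nabla\cdot)$ with an $h$-independent constant, because $A_{h}-I$ can be made arbitrarily small in $L^{\infty}$. Carrying out this perturbation uniformly in $h$ despite the interelement jumps of $A_{h}$ and the concave corners of $\Omega$ is the heart of the proof; combining it with the Schatz weighted analysis of $\tilde g_{1}-\tilde g_{h,1}$ yields \eqref{weak-max-principle}.
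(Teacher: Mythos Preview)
Your overall architecture---pull back to $\Omega$ via $\Phi_h$, split the dual solution into a constant-coefficient Poisson part plus an $A_h$-perturbation, and handle the perturbation by a Meyers-type $W^{1,p}$ argument---is indeed the route the paper takes. But two steps in your sketch do not work as stated.

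\textbf{The reduction is wrong.} You claim that the weak maximum principle reduces to the whole-domain bound $\|g\|_{W^{1,1}(\Omega_h)}+\|\nabla(g-g_h)\|_{L^1(\Omega_h)}\le C$. It does not. Tracing your own setup: with $g_h\in\mathring S_h(\Omega_h)$ and discrete harmonicity one gets
\[
u_h(x_0)=(\nabla u_h,\nabla(g-g_h))-\int_{\partial\Omega_h}u_h\,\partial_n g .
\]
The boundary term is controlled by $\|u_h\|_{L^\infty(\partial\Omega_h)}$ via $\|\partial_n g\|_{L^1(\partial\Omega_h)}$. For the volume term, the only way to produce the boundary norm of $u_h$ is to subtract a $\chi_h\in\mathring S_h(\Omega_h)$ that agrees with $u_h$ at all interior nodes (not an ``affine function on a boundary-adjacent element''): then $u_h-\chi_h$ is supported in the boundary strip $\Lambda_h$ and, after an inverse inequality,
\[
|(\nabla(u_h-\chi_h),\nabla(g-g_h))|\le Ch^{-1}\|u_h\|_{L^\infty(\partial\Omega_h)}\,\|\nabla(g-g_h)\|_{L^1(\Lambda_h)} .
\]
So the quantity you must bound is $h^{-1}\|\nabla(g-g_h)\|_{L^1(\Lambda_h)}$, localized to the $O(h)$-strip and carrying the factor $h^{-1}$. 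The global $L^1$ bound cannot absorb this $h^{-1}$. This is precisely what the paper proves (in the normalization with an $L^2$-normalized source $\varphi$ the target is $\rho^{-N/2}h^{-1}\|\nabla(v-R_hv)\|_{L^1(\Lambda_h)}\le C$), and it requires the full dyadic decomposition around $x_0$ together with interior energy estimates; see Sections~\ref{sec: start of the proof} and~\ref{sec: main proof}.

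\textbf{The $\tilde g_2$ part is not harmless.} You propose to dispose of $\tilde g_2-\tilde g_{h,2}$ by a C\'ea estimate in $W^{1,p}$ with $p>N$, asserting the result is $o(1)$. But for a regularized delta source, $\|\nabla\tilde g_1\|_{L^p(\Omega)}$ scales like $h^{-N/p'}$, so $\|E_h\nabla\tilde g_1\|_{L^p}\lesssim h^{r}h^{-N/p'}$, which for $r=1$, $N=3$, $p>3$ diverges. More to the point, you need the contribution of $\tilde g_2$ to the \emph{boundary-strip} quantity above, not a global $W^{1,p}$ norm. The paper handles this by working with $p$ close to $2$ (not $p>N$), proving localized $W^{1,p}$ bounds for $v_2$ via Lemma~\ref{v2-weak-cappo}, and feeding both $v_1$ and $v_2$ into the same dyadic summation. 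The $W^{1,p}$ regularity you identify (Lemma~\ref{gradient-lp}) is indeed needed, but for the full range $1<p<\infty$, and it enters through $W^{1,p}$-stability of the Ritz projection $R_h$ (Proposition~\ref{w1,p-stable}) and the duality Lemma~\ref{lp-error-ritz}, not as a single end-game estimate with $p>N$.
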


In the isoparametric finite elements described in \cite{Lenoir_1986}, each curved simplex $K\in \K$ is the image of a map $F_{K}:\hat K\to K$ defined on the reference simplex $\hat K$, which is a polynomial of degree no larger than $r$ and transforms the finite element structure of $\hat K$ to $K$. There is a homeomorphism $\Phi_h:\Omega_h\to \Omega$, which is piecewise smooth on each simplex and globally Lipschitz continuous. If we denote $\Phi_{h,K}:=\Phi_h|_K$ and $\check{K}:=\Phi_h(K)$, then $\Phi_{h,K}:K\rightarrow \check{K}$ is a diffeomorphism which transforms the finite element structure of $K$ to $\check{K}$. Therefore, $\check{\K}=\{\check{K}:K\in\K\}$ is a triangulation of the curved domain $\Omega$, with 
$$
\Omega_h=\bigcup_{K\in\K}K
\quad\mbox{and}\quad
\Omega=\bigcup_{K\in\K}\check{K}. 
$$ 
One can define isoparametric finite element space $S_h(\Omega_{h})$ as
\begin{equation}\label{fem-def1}
	S_h(\Omega_{h})=\{v_h\in H^1(\Omega_{h}): v_h|_K\circ F_K \mbox{ is a polynomial on $\hat K$ of degree\,$\le r\,$ for $K\in \mathscr{K}$}\} . 
\end{equation}
The finite element spaces on $\Omega$ can be defined as 
\begin{equation}\label{fem-def2}
	S_h(\Omega)=\{\check{v}_h\in H^1(\Omega): \check{v}_h\circ\Phi_h \in S_h(\Omega_h) \}
	\quad\mbox{and}\quad
	\mathring S_h(\Omega)=\{ \check{v}_h\in  S_h(\Omega): \check{v}_h=0\,\,\mbox{on}\,\,\partial\Omega \} . 
\end{equation}

For a finite element function $v_h\in S_h(\Omega_h)$, we can associate it with a finite element function $\check{v}_h\in  S_h(\Omega)$ defined by $v_h\circ\Phi_h^{-1} :=\check{v}_h$. 


\begin{remark}
{\it 
By using the notation which link $v_h\in S_h(\Omega_h)$ and $\check{v}_h\in S_h(\Omega)$, the weak maximum principle in \eqref{weak-max-principle} can be equivalently written as 
\begin{align}
\left\|\check{u}_h\right\|_{L^\infty(\Omega)}\leqslant C\left\|\check{u}_h\right\|_{L^\infty(\partial \Omega)} .
\end{align}
}
\end{remark}

For a function $f\in C^0(\overline\Omega_h)$, one can define its local interpolation $I_{h,K}f$ on a simplex $K\in\K$ as the function satisfying 
$$
( I_{h,K}f )\circ F_K := I_{\hat K}(f\circ F_K) ,
$$
where $I_{\hat K}$ is the standard Lagrange interpolation on the reference simplex $\hat K$ (onto the space of polynomials of degree$\,\le r$). The global interpolation $I_hf\in S_h(\Omega_h)$ is defined as 
$$
I_hf |_{K}:= I_{h,K}f \quad\forall\, K\in\K. 
$$
For the analysis of the isoparametric FEM, we also define an interpolation operator $\check{I}_h: C(\overline\Omega)\rightarrow  S_h(\Omega)$ by 
$$
(\check{I}_h v) \circ\Phi_h = I_h(v\circ\Phi_h) \quad\forall\, v\in C(\overline\Omega). 
$$

As an application of the weak maximum principle, we establish an $L^\infty$-norm best approximation result of isoparametric FEM for the Poisson equation in a curvilinear polyehdron. 
We assume that the triangulation can be extended to a bigger domain which contains $\overline\Omega$, as stated below. 

\begin{assumption}\label{Assumption}
The curvilinear polyhedral domain $\Omega$ can be extended to a larger convex polyhedron $\Omega_*$ with piecewise flat boundaries such that $\overline\Omega\subset \Omega_*$ and the triangulation $\K$ can be extended to a quasi-uniform triangulation $\K_*$ on $\Omega_*$
(thus the triangulation in $\Omega_*\backslash\overline\Omega$ is also isoparametric on its boundary $\partial\Omega$). 
\end{assumption}
\begin{remark}
	Here $\Omega_*$ can be chosen as a large cube whose interior contains $\overline{\Omega}$. Note that the triangulation $\mathscr{K}$ is obtained from some triangulation $\widetilde{\mathscr{K}}$ consisting of flat simplexes by the method in Lenoir's paper \cite{Lenoir_1986}. We can first extend $\widetilde{K}$ to a quasi-uniform flat triangulation $\widetilde{\mathscr{K}}_*$ of $\Omega_*$, and then modify those flat simplexes $\widetilde{\mathscr{K}}$ with one of whose edges/faces attaches to the boundary $\partial\Omega$, to isoparametric elements by the method in Lenoir's paper \cite{Lenoir_1986}. This leads to a quasi-uniform triangulation $\mathscr{K}_*$ on $\Omega_*$ which extends $\mathscr{K}$. By our construction, the triangulation on $\Omega_*\setminus\overline{\Omega}$ is also isoparametric on its boundary $\partial\Omega$.
\end{remark}
\begin{theorem}\label{THM2}
For $f\in L^p(\Omega)$ with some $p>N$, we consider the Poisson equation 
\begin{align}\label{Poisson_Eq}
\left\{\begin{aligned}
-\Delta u &= f &&\mbox{in}\,\,\,\Omega\\
u&=0 &&\mbox{on}\,\,\,\partial\Omega\\
\end{aligned}\right.
\end{align}
and the isoparametric FEM of degree $r\ge 1$ for \eqref{Poisson_Eq}{\rm:} Find $u_h\in \mathring S_h(\Omega_h)$ such that  
\begin{align}\label{FEM_u_h}
\int_{\Omega_h}\nabla u_h\cdot \nabla \chi_h \d x =\int_{\Omega_h} \tilde f \chi_h \d x 
\quad\forall\,\chi_h\in\mathring S_h(\Omega_h) , 
\end{align}
where $\tilde f\in L^p(\Omega\cup\Omega_h)$ is any extension of $f\in L^p(\Omega)$ satisfying 
$
\|\tilde f\|_{L^p(\Omega\cup\Omega_h)}
\le C\| f\|_{L^p(\Omega)} .
$ 
Assuming that the triangulation satisfies Assumption {\rm\ref{Assumption}}, there exist positive constants $h_0$ and $C$ {\rm(}independent of $f$, $u$ and $h)$ such that the solutions of \eqref{Poisson_Eq} and \eqref{FEM_u_h} satisfy the following inequality for $h\leq h_0${\rm:} 
\begin{align}
    \|u-u_h\|_{L^\infty(\Omega)}\leq C \ell_h\|u-\check{I}_hu\|_{L^\infty(\Omega)}+Ch^{r+1}\ell_h\|f\|_{L^p(\Omega)} , 
\end{align}
where $u_h$ is extended to be zero on $\Omega\backslash\Omega_h$, and $\ell_h$ is defined as 
$$
\ell_h=
\left\{\begin{aligned}
	&\ln(2+1/h) &&\mbox{for piecewise linear elements},\\
	&\,1 &&\mbox{for higher-order finite elements}.
\end{aligned}\right.
$$ 

\end{theorem}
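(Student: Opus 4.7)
The strategy is to adapt Schatz's duality argument to the isoparametric setting. Since $\Omega\neq\Omega_h$, the true solution $u$ and the finite element solution $u_h$ live on different domains; I would bridge this by introducing an ambient perturbed domain $\Omega^t$ containing both, comparing $u_h$ with the Poisson solution on $\Omega^t$, and invoking the weak maximum principle (Theorem \ref{THM1}) to control the discrete Green's function.

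First, construct a globally smooth flow $X^t:\R^N\to\R^N$ whose velocity is supported near $\partial\Omega$ and points outward (while remaining tangential to edges), and set $\Omega^t:=X^t(\Omega)$. Choosing $t\sim h^{r+1}$, one has $\overline\Omega\cup\overline\Omega_h\subset\Omega^t$ with $\mathrm{dist}(\partial\Omega^t,\partial\Omega)\sim h^{r+1}$. Since edge openings below $\pi$ are preserved by such a flow, $\Omega^t$ admits a uniform $W^{1,\infty}$ elliptic regularity estimate with a constant independent of $t$. Let $u^t\in W_0^{1,\infty}(\Omega^t)$ solve $-\Delta u^t=\tilde f$ on $\Omega^t$; then $\|\nabla u^t\|_{L^\infty(\Omega^t)}\le C\|f\|_{L^p(\Omega)}$, which together with $u^t|_{\partial\Omega^t}=0$ gives
\begin{align*}
\|u-u^t\|_{L^\infty(\Omega)} + \|u^t\|_{L^\infty(\Omega_h\setminus\Omega)} \le Ch^{r+1}\|f\|_{L^p(\Omega)}.
\end{align*}

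Next, apply Schatz duality on $\Omega_h$. Pick $x_0\in\overline\Omega_h$ where $u_h-\check I_h u^t$ attains its $L^\infty$-maximum, let $\delta_h$ be a smooth regularisation of the Dirac mass at $x_0$, and let $g\in H^1_0(\Omega_h)$ solve $-\Delta g=\delta_h$ with Ritz projection $g_h\in\sh(\Omega_h)$. Because $g_h$ vanishes on $\partial\Omega_h$ and $-\Delta u^t=\tilde f$ holds distributionally on $\Omega_h\subset\Omega^t$, we have the cross Galerkin orthogonality
\begin{align*}
\int_{\Omega_h}\nabla u^t\cdot\nabla g_h\,\d x=\int_{\Omega_h}\tilde f\,g_h\,\d x=\int_{\Omega_h}\nabla u_h\cdot\nabla g_h\,\d x,
\end{align*}
so that the duality reduces to
\begin{align*}
(u_h-\check I_h u^t)(x_0)\approx\int_{\Omega_h}\nabla(u^t-\check I_h u^t)\cdot\nabla g_h\,\d x.
\end{align*}
The standard Schatz--Wahlbin weighted estimate $\|\nabla g_h\|_{L^1(\Omega_h)}\le C\ell_h$---proved via dyadic annular decomposition around $x_0$, with Theorem \ref{THM1} controlling the discrete harmonic part on each annulus---then yields $|(u_h-\check I_h u^t)(x_0)| \le C\ell_h\|u^t-\check I_h u^t\|_{L^\infty(\Omega_h)}$.

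To conclude, split $u-u_h=(u-u^t)+(u^t-\check I_h u^t)+(\check I_h u^t-u_h)$ on $\Omega\cap\Omega_h$ and bound each summand by the previous displays; on the sliver $\Omega\setminus\Omega_h$, use $u_h\equiv 0$ together with $|u|\le\|\nabla u\|_{L^\infty}\,\mathrm{dist}(\cdot,\partial\Omega)\le Ch^{r+1}\|f\|_{L^p(\Omega)}$. Writing $\|u^t-\check I_h u^t\|_{L^\infty(\Omega_h)}\le\|u-\check I_h u\|_{L^\infty(\Omega)}+Ch^{r+1}\|f\|_{L^p(\Omega)}$ (using $L^\infty$-stability of $\check I_h$ together with the first display) completes the reduction to the claimed estimate. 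The principal obstacle I foresee is the uniform-in-$t$ $W^{1,\infty}$ regularity on $\Omega^t$: since edge openings of a curvilinear polyhedron just below $\pi$ are borderline for $W^{1,\infty}$ theory, the flow must preserve all dihedral and solid angles up to negligible corrections, and a stable version of the regularity estimate under such deformations must be extracted. A subsidiary difficulty is the careful implementation of the Schatz--Wahlbin weighted estimates on the curved mesh $\K$, where isoparametric boundary simplices interact with the dyadic annuli surrounding $x_0$.
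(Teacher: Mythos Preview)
Your overall strategy is sound and shares the paper's central ingredient: the perturbed domain $\Omega^t\supset\Omega\cup\Omega_h$ together with a uniform-in-$t$ $W^{1,\infty}$ estimate for the Poisson solution $u^t$. However, the paper's execution differs from yours in two substantive ways.

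First, the paper does \emph{not} run Schatz duality directly on $\Omega_h$ with a regularized Green's function. Instead it introduces the continuous Poisson solution $u^{(h)}\in H^1_0(\Omega_h)$ and observes that $u_h$ is exactly its Ritz projection. The comparison $\|u-u^{(h)}\|_{L^\infty}\le Ch^{r+1}\|f\|_{L^p}$ is obtained via $u^t$ and the maximum principle (your first display), while $\|u^{(h)}-u_h\|_{L^\infty(\Omega_h)}$ is handled by invoking Assumption~\ref{Assumption}: extend $u^{(h)}$ by zero to the ambient convex polyhedron $\Omega_*$, take the Ritz projection $\tilde u_h$ on the extended mesh $\K_*$, and use the \emph{already known} $L^\infty$ best-approximation result on convex polyhedra for $\|\tilde u-\tilde u_h\|_{L^\infty(\Omega_*)}$. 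The weak maximum principle (Theorem~\ref{THM1}) is then applied once, globally, to the discrete harmonic function $\tilde u_h-u_h$ on $\Omega_h$. This is where Assumption~\ref{Assumption} is actually used, and it lets the paper avoid reproving any weighted Green's-function estimate in the isoparametric setting.

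Second, your description of Theorem~\ref{THM1} as ``controlling the discrete harmonic part on each annulus'' in a dyadic decomposition is not how the weak maximum principle enters either argument; it is a global boundary-reduction tool, not a local energy estimate. Your proposed bound $\|\nabla g_h\|_{L^1(\Omega_h)}\le C\ell_h$ is plausible but not ``standard'' here: after pulling back via $\Phi_h$ it becomes a Green's-function estimate for the perturbed bilinear form $\check B_h$ with discontinuous coefficient $A_h$, which would require reworking the Schatz--Wahlbin machinery along the lines of Section~\ref{section:3}. The paper's extension trick sidesteps this entirely. A smaller point: your flow need not be ``tangential to edges,'' and the uniform $W^{1,\infty}$ regularity on $\Omega^t$ is obtained not by tracking dihedral angles but by pulling the equation back to $\Omega$ via the smooth diffeomorphism $\Psi_t$ and applying the variable-coefficient $W^{1,\infty}$ estimate \eqref{grad-w-Linfty} on the fixed domain.
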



The proofs of Theorems \ref{THM1} and \ref{THM2} are presented in the next two sections, respectively. For the simplicity of notation, we denote by $C$ a generic positive constant which may be different at different occurrences, possibly depending on the specific domain $\Omega$ and the shape-regularity and quasi-uniformity of the triangulation, and the polynomial degree $r\ge 1$,
but is independent of the mesh size $h$.

\section{Proof of Theorem \ref{THM1}}\label{section:3}
\setcounter{equation}{0}

The proof of Theorem \ref{THM1} is divided into six parts, presented in the following six subsections. 

\subsection{Properties of the isomparametric FEM}

In this subsection, we summarize the basic properties of the isoparametric FEM to be used in the proof of Theorem \ref{THM1}. 

\begin{lemma}[\!\!{\cite[Theorem 1, Theorem 2, Proposition 2, Proposition 3, Proposition 4]{Lenoir_1986}}] \label{Lemma:basic}
Let $\check{\K}$ be the triangulation of $\Omega$ by isoparametric finite elements of degree $r\ge 1$, with the maps $F_K:\hat K\rightarrow K$ and $\Phi_{h,K}:K\rightarrow\check{K}$ described in Section {\rm\ref{sec:isopara}}. 
Let $D^s$ denote the Fr\'echet derivative of order $s$. Then the following results hold{\rm:} 
\begin{enumerate}
    \item[1.] $F_K:\hat K\to K$ is a diffeomorphism such that 
    \begin{align}\label{F_K-property}
        \begin{aligned}
        &\|D^sF_K\|_{L^\infty(\hat K)}\leq Ch^s &&\forall s\in [1,r+1] \\
        &\|D^sF^{-1}_K\|_{L^\infty(K)}\leq Ch^{-s}&&\forall s\in [1,r+1]
        \end{aligned}
    \end{align}
    \item[2.]  $\Phi_{h,K}:K\to \check{K}$ is a diffeomorphism such that 
    \begin{align}
        \begin{aligned}
        &\|D^s(\Phi_{h,K}-{\rm Id})\|_{L^\infty(K)}\leq Ch^{r+1-s} &&\forall s\in [1,r+1]\\
        &\|D^s(\Phi_{h,K}^{-1}-{\rm Id})\|_{L^\infty(\check{K})}\leq Ch^{r+1-s} &&\forall s\in [1,r+1]
        \end{aligned}
    \end{align}
    \item[3.]  
    For $v\in H^m(K)$ and integer $m\in [0,r+1]$, the norms $\|v\|_{H^m(K)}$ and $\|v\circ \Phi^{-1}_{h,K}\|_{H^m(\check{K})}$ are uniformly equivalent with respect to $h$. 
    \item[4.]  
    Each curved simplex $K\in\K$ corresponds to a flat simplex $\widetilde{K}$ 
    (which has the same vertices as $K$), and there is a unique linear bijection $F_{\widetilde{K}}:\hat K\to \widetilde{K}$ which maps the reference simplex $\hat K$ onto $\widetilde{K}$. The map $\widetilde{\Psi}_K:= F_K\circ F^{-1}_{\widetilde{K}}: \widetilde{K}\to K$ is a diffeomorphism satisfying the following estimates: 
    \begin{align}\label{map-Psi_K}
        \begin{aligned}
        &\|D(\widetilde{\Psi}_K-{\rm Id})\|_{L^\infty(\widetilde{K})}\leq Ch, \quad \|D(\widetilde{\Psi}^{-1}_K-{\rm Id})\|_{L^\infty(K)}\leq Ch \\
        &\|D^s\widetilde{\Psi}_K\|_{L^\infty(\widetilde{K})}\leq C, \quad \|D^s\widetilde{\Psi}^{-1}_K\|_{L^\infty(K)}\leq C\quad\forall s\in [1,r+1] . 
        \end{aligned}
    \end{align} 
    \item[5.]  For $v\in H^m(K)$ and integer $m\in [0,r+1]$, the norms $\|v\|_{H^m(K)}$ and $\|v\circ \widetilde{\Psi}_K\|_{H^m(\widetilde{K})}$ are uniformly equivalent with respect to $h$. 
\end{enumerate}
\end{lemma}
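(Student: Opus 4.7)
The plan is to follow Lenoir's explicit construction in \cite{Lenoir_1986}. The central observation is that for each boundary simplex $K$, the isoparametric map $F_K$ is a polynomial of degree $\le r$ on $\hat K$ defined by Lagrange interpolation at the reference nodes: image nodes on the boundary are pinned to $\partial\Omega$, while interior nodes coincide with the affine images. Since $\partial\Omega$ deviates from the flat face of the associated affine simplex $\widetilde K$ by at most $O(h^{r+1})$, the nonaffine part of $F_K$ is correspondingly small.

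For part 1, I would decompose $F_K = F_{\widetilde K} + G_K$, where $F_{\widetilde K}$ is the affine map onto $\widetilde K$ satisfying $|DF_{\widetilde K}|\sim h$ and $D^s F_{\widetilde K}\equiv 0$ for $s\ge 2$, and $G_K$ is a polynomial of degree $\le r$ on $\hat K$ with $\|G_K\|_{L^\infty(\hat K)} = O(h^{r+1})$ by the boundary-node argument above. An inverse estimate on polynomials over the fixed reference simplex gives $\|D^s G_K\|_{L^\infty(\hat K)} \le Ch^{r+1}$ for all $s\ge 0$, so $\|D^s F_K\|_{L^\infty(\hat K)}\le Ch^s$ for $s\in[1,r+1]$. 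The bound on $D^s F_K^{-1}$ follows once one checks that $F_K$ is a diffeomorphism with Jacobian bounded below by $ch^N$ for $h$ small (a perturbation of the nondegenerate affine case), by repeated differentiation of $F_K^{-1}\circ F_K = \mathrm{Id}$ and the Faà di Bruno formula.

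Part 4 is then immediate: $\widetilde\Psi_K = F_K\circ F_{\widetilde K}^{-1}$, so changing variables from $\widetilde K$ to $\hat K$ via $F_{\widetilde K}^{-1}$ (which rescales $s$-th derivatives by $h^{-s}$) and invoking part 1 yields the claimed estimates. For part 2, Lenoir defines $\Phi_h$ as the identity on interior simplices and on each boundary simplex as $\Phi_{h,K} = \Psi_K^{\mathrm{curv}}\circ F_K^{-1}$, where $\Psi_K^{\mathrm{curv}}\colon \hat K \to \check K$ is the exact smooth parametrization of $\check K\subset\Omega$. Both $F_K$ and $\Psi_K^{\mathrm{curv}}$ share the affine part $F_{\widetilde K}$ and agree at the Lagrange nodes up to $O(h^{r+1})$ perturbations, so a Taylor/interpolation argument on $\hat K$ gives $\|\Psi_K^{\mathrm{curv}}-F_K\|_{L^\infty(\hat K)} = O(h^{r+1})$ together with the corresponding derivative bounds. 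Pushing forward to $K$ picks up a factor $h^{-s}$ at the $s$-th derivative, producing $\|D^s(\Phi_{h,K}-\mathrm{Id})\|_{L^\infty(K)}\le Ch^{r+1-s}$; the estimate for the inverse is symmetric.

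Parts 3 and 5 then reduce to change-of-variables formulas combined with the Jacobian bounds from parts 2 and 4 and the chain rule for higher derivatives, with the lower bound on the Jacobian ensuring that each $H^m$-seminorm transforms boundedly in both directions. The main obstacle I anticipate is the endpoint $s=r+1$ in part 2: since $F_K$ is only polynomial of degree $\le r$, its $(r+1)$-th derivative vanishes, so the $O(1)$ bound on $D^{r+1}\Phi_{h,K}$ must be extracted entirely from $\Psi_K^{\mathrm{curv}}$. This requires that the smooth charts of $\partial\Omega$ — and, under the curvilinear polyhedron hypothesis, their extensions across edges and vertices — be uniformly $C^{r+1}$ in the local coordinates used to construct $\Psi_K^{\mathrm{curv}}$, which is exactly where the assumptions on $\Omega$ in Section~\ref{sec:isopara} enter.
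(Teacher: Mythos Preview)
The paper does not prove this lemma at all: it is stated with a citation to \cite[Theorem 1, Theorem 2, Proposition 2, Proposition 3, Proposition 4]{Lenoir_1986} and then used as a black box. Your sketch is a reasonable outline of how Lenoir's construction yields these bounds, but there is no in-paper argument to compare against; the authors simply import the results from the reference.
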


Let $W^{k,p}_h(\Omega)$ be the space of functions on $\Omega$ whose restriction on each $\check{K}\in \check{\K}$ lies in $W^{k,p}(\check{K})$, equipped with the following norm:
$$
\|v\|_{W^{k,p}_h(\Omega)}
:=
\left\{\begin{aligned}
&\bigg(\sum_{K\in\K} \|v\|_{W^{k,p}(K)}^p \bigg)^{\frac1p}
&&\mbox{for}\,\,\,1\le p<\infty ,\\
&\sup_{K\in\K} \|v\|_{W^{k,p}(K)} 
&&\mbox{for}\,\,\, p=\infty .\\
\end{aligned}\right. 
$$ 
In the case $p=2$ we write $H^{l,h}(\Omega)= W^{l,2}_h(\Omega)$. The following local interpolation error estimate was proved in \cite[Lemma 7]{Lenoir_1986}; also see \cite[Theorem 4.3.4]{Ciarlet_2002}. 
Although it was proved only for $p=2$ in \cite[Lemma 7]{Lenoir_1986}, the proof can be extended to $1\le p\le \infty$ straightforwardly. 

\begin{lemma}[Lagrange interpolation]\label{Proposition-A0}
Let $\check{I}_{h,K}:C(\overline\Omega)\rightarrow S_h(\Omega)$ be the interpolation operator defined by 
$$
\check{I}_{h,K}f \circ \Phi_h := I_{h,K}f \quad\forall\, f\in C(\overline\Omega) . 
$$
Then, for $1\leq k\leq r+1$ and $1\le p\le \infty$ such that $W^{k,p}_h(\Omega)\hookrightarrow C(\overline\Omega)$ {\rm(}e.g., $kp>N$ when $p>1$ or $k\geq N$ when $p=1${\rm)}, 
the following error estimate holds: 
$$
|u-\check{I}_{h,K}u|_{W^{i,p}(\check{K})}\leq Ch^{k-i}\|u\|_{W^{k,p}(\check{K})} \quad \forall\, 0\leq i\leq k, 
\,\, \forall\, \check{K}\in \check{\K}, 
\,\, \forall\, u\in C(\widebar{\Omega})\cap W^{k,p}_h(\Omega) .
$$
\end{lemma}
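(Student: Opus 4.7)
The plan is to reduce everything to a standard Bramble--Hilbert estimate on the reference simplex $\hat K$, and then scale back using the Jacobian bounds provided by Lemma \ref{Lemma:basic}. Fix $\check K\in\check\K$ with corresponding curved simplex $K\in\K$, and introduce the composite diffeomorphism
$$
G_K := \Phi_{h,K}\circ F_K : \hat K \longrightarrow \check K .
$$
Since $F_K$ is polynomial with $\|D^s F_K\|_{L^\infty(\hat K)}\le Ch^s$ and $\Phi_{h,K}-{\rm Id}$ is small in the sense $\|D^s(\Phi_{h,K}-{\rm Id})\|_{L^\infty(K)}\le Ch^{r+1-s}$, the chain rule (Fa\`a di Bruno) gives the uniform scaling bounds
$$
\|D^s G_K\|_{L^\infty(\hat K)}\le Ch^s , \qquad \|D^s G_K^{-1}\|_{L^\infty(\check K)}\le Ch^{-s} ,\qquad 1\le s\le r+1,
$$
together with $|\det DG_K|\sim h^N$. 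A standard change of variables combined with the chain rule then yields, for every $0\le j\le r+1$ and $1\le p\le\infty$,
$$
|v\circ G_K|_{W^{j,p}(\hat K)}\le C h^{j-N/p}\|v\|_{W^{j,p}(\check K)},
\qquad
|w\circ G_K^{-1}|_{W^{j,p}(\check K)}\le C h^{N/p-j}\|w\|_{W^{j,p}(\hat K)} ,
$$
with the convention $N/p=0$ when $p=\infty$.

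Next, note that by the very definition of $\check I_{h,K}$ one has $(\check I_{h,K}u)\circ G_K = I_{\hat K}(u\circ G_K)$, where $I_{\hat K}$ is the classical Lagrange interpolant onto polynomials of degree $\le r$ on $\hat K$. Under the embedding assumption $W^{k,p}(\hat K)\hookrightarrow C(\overline{\hat K})$, the operator $I_{\hat K}$ is bounded from $W^{k,p}(\hat K)$ into $W^{i,p}(\hat K)$ and is a projection onto $\mathbb{P}_r\supseteq\mathbb{P}_{k-1}$, so the Bramble--Hilbert lemma gives
$$
|\hat u - I_{\hat K}\hat u|_{W^{i,p}(\hat K)} \le C\,|\hat u|_{W^{k,p}(\hat K)}
\qquad\textrm{for } 0\le i\le k ,
$$
where $\hat u := u\circ G_K$.

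Combining the two scaling inequalities with the Bramble--Hilbert bound yields
$$
|u-\check I_{h,K}u|_{W^{i,p}(\check K)}
\le C h^{N/p-i} |\hat u - I_{\hat K}\hat u|_{W^{i,p}(\hat K)}
\le C h^{N/p-i}\,|\hat u|_{W^{k,p}(\hat K)}
\le C h^{k-i}\|u\|_{W^{k,p}(\check K)} ,
$$
which is the claimed estimate. The main obstacle is bookkeeping in the chain-rule expansion of $D^j(v\circ G_K)$, which is a sum over partitions pairing $D^l v$ (with $l\le j$) against products of $D^{s_1}G_K,\dots,D^{s_m}G_K$ with $s_1+\cdots+s_m=j$. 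One must verify that every such term is controlled by $C h^{j}\|D^l v\|_{L^p}$ (in the appropriate volume-scaled sense); this is where the bound $\|D^s G_K\|_{L^\infty}\le Ch^s$ is essential, because higher-order derivatives of $\Phi_{h,K}$ are genuine perturbations of those of the identity and contribute \emph{higher}, not lower, powers of $h$. Apart from this chain-rule bookkeeping and the trivial modification $N/p=0$ when $p=\infty$, the argument follows Lenoir's $p=2$ proof verbatim.
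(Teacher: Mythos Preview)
Your proposal is correct and is precisely the argument the paper has in mind: the paper does not give its own proof but cites \cite[Lemma~7]{Lenoir_1986} and \cite[Theorem~4.3.4]{Ciarlet_2002}, remarking that the extension from $p=2$ to general $1\le p\le\infty$ is straightforward. Your reduction to the reference element via $G_K=\Phi_{h,K}\circ F_K$, the scaling estimates from Lemma~\ref{Lemma:basic} combined via Fa\`a di Bruno, and the Bramble--Hilbert step on $\hat K$ are exactly Lenoir's proof carried out for general $p$. One cosmetic point: in your final chain of inequalities the intermediate term should carry the full norm $\|\hat u - I_{\hat K}\hat u\|_{W^{i,p}(\hat K)}$ rather than the seminorm, since the backward scaling inequality controls the $W^{i,p}$ seminorm on $\check K$ by the full $W^{i,p}$ norm on $\hat K$; this is harmless because Bramble--Hilbert bounds all lower-order seminorms simultaneously.
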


Since the Lagrange interpolation is defined by using the pointwise values of a function at the Lagrange nodes, its stability in the $W^{k,p}$ norm is valid only when $W^{k,p}(\Omega)\hookrightarrow C(\overline\Omega)$, i.e., in the case ``$kp>N$ and $p>1$'' or ``$k\geq N$ and $p=1$''. 
One can remove this restriction by using the Scott--Zhang interpolation, which can be constructed first in the flat triangulation $\widetilde\K=\{\widetilde{K}:K\in\K\}$ as in \cite[Section 4.8]{Brenner_Scott} and then be transformed to $\K$ via the maps $\widetilde{\Psi}_K$. 
Namely, by denoting $\widetilde\Omega_h=\bigcup_{K\in\K}\widetilde K$ and $\widetilde\Psi_h:\widetilde\Omega_h\rightarrow \Omega_h$, we can define
$$
(\mathcal{I}^hv) \circ \widetilde\Psi_h := \widetilde{\mathcal{I}}^h(v\circ \widetilde\Psi_h) 
\quad\forall\, v\in L^1(\Omega_h) ,
$$
where $\widetilde{\mathcal{I}}^h$ denotes the Scott--Zhang interpolation on the flat triangulation $\widetilde\K$. 
Since the maps $\widetilde{\Psi}_h$ induces norm equivalence on every simplex, as a result of \eqref{map-Psi_K}, 
we have the following result. 

\begin{lemma}[Scott--Zhang interpolation]\label{Scott-Zhang}
There is a global interpolation operator
$$\mathcal{I}^h:L^1(\Omega_h)\to S_h(\Omega_h)$$
such that 
$$
|u-\mathcal{I}^h u|_{W^{i,p}_h(\Omega_h)}\leq Ch^{k-i}\|u\|_{W^{k,p}_h(\Omega_h)} 
\quad \forall \, 0\leq i\leq k, \,\,\forall\, 1\leq k\leq r+1, \,\,\forall\, u\in W^{k,p}_h(\Omega_h) .
$$
\end{lemma}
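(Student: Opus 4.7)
The plan is to define $\mathcal{I}^h$ by pulling back the standard Scott--Zhang operator from the auxiliary flat triangulation $\widetilde{\mathscr K}=\{\widetilde K:K\in\mathscr K\}$ through the piecewise diffeomorphism $\widetilde\Psi_h$, and then to transfer the flat error estimate one element at a time using Lemma~\ref{Lemma:basic} items~4 and~5. All the ingredients are already furnished by the excerpt; only a bookkeeping argument is required.

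First I would set up the flat mesh and the glued diffeomorphism. By Lemma~\ref{Lemma:basic}(4), each curved simplex $K\in\mathscr K$ has a flat companion $\widetilde K$ with the same vertices and a smooth bijection $\widetilde\Psi_K=F_K\circ F_{\widetilde K}^{-1}:\widetilde K\to K$ whose derivatives up to order $r+1$ are uniformly bounded (independently of $h$) together with those of its inverse. In Lenoir's construction only the facet of $K$ that approximates $\partial\Omega$ is curved, so on every interelement facet of $\widetilde{\mathscr K}$ both $F_K$'s coincide with the affine map on that facet, and thus $\widetilde\Psi_K$ equals the identity there. Consequently the elementwise maps glue into a globally continuous bijection $\widetilde\Psi_h:\widetilde\Omega_h\to\Omega_h$, where $\widetilde\Omega_h=\bigcup_{K}\widetilde K$ carries a quasi-uniform flat triangulation $\widetilde{\mathscr K}$ of mesh size $\sim h$.

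Next I would invoke the classical Scott--Zhang operator $\widetilde{\mathcal I}^h$ on the standard piecewise-polynomial space of degree $r$ on $\widetilde{\mathscr K}$, as constructed in \cite{Brenner_Scott}. It satisfies, for $0\le i\le k\le r+1$,
\begin{equation*}
|w-\widetilde{\mathcal I}^h w|_{W^{i,p}_h(\widetilde\Omega_h)}\le Ch^{k-i}\|w\|_{W^{k,p}_h(\widetilde\Omega_h)}\qquad\forall\,w\in W^{k,p}_h(\widetilde\Omega_h).
\end{equation*}
I would then set
\begin{equation*}
\mathcal I^h v:=\bigl[\widetilde{\mathcal I}^h(v\circ\widetilde\Psi_h)\bigr]\circ\widetilde\Psi_h^{-1},\qquad v\in L^1(\Omega_h).
\end{equation*}
Since $(\mathcal I^h v)\circ F_K=\widetilde{\mathcal I}^h(v\circ\widetilde\Psi_h)\circ F_{\widetilde K}^{-1}\circ F_{\widetilde K}\circ F_K^{-1}\circ F_K=\widetilde{\mathcal I}^h(v\circ\widetilde\Psi_h)\circ F_{\widetilde K}^{-1}$ is a polynomial of degree $\le r$ on $\hat K$, we have $\mathcal I^h v\in S_h(\Omega_h)$ by definition~\eqref{fem-def1}. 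Global continuity is preserved because $\widetilde{\mathcal I}^h(v\circ\widetilde\Psi_h)$ is continuous across flat facets and $\widetilde\Psi_h^{-1}$ is a global homeomorphism.

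Finally I would derive the claimed estimate by an elementwise change of variables. Applied on each pair $(\widetilde K,K)$, the uniform bounds $\|D^s\widetilde\Psi_K\|_{L^\infty(\widetilde K)}+\|D^s\widetilde\Psi_K^{-1}\|_{L^\infty(K)}\le C$ for $1\le s\le r+1$ from Lemma~\ref{Lemma:basic}(4), combined with the chain and Faà di Bruno rules, yield the norm equivalence
\begin{equation*}
C^{-1}\|w\|_{W^{m,p}(\widetilde K)}\le\|w\circ\widetilde\Psi_K^{-1}\|_{W^{m,p}(K)}\le C\|w\|_{W^{m,p}(\widetilde K)},\qquad 0\le m\le r+1,
\end{equation*}
i.e.\ Lemma~\ref{Lemma:basic}(5). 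Applying this to $w=(v-\mathcal I^h v)\circ\widetilde\Psi_K$ on each element, using the Scott--Zhang bound on $\widetilde{\mathscr K}$, and summing the $p$-th powers (or taking the supremum when $p=\infty$) over $K\in\mathscr K$ gives
\begin{equation*}
|v-\mathcal I^h v|_{W^{i,p}_h(\Omega_h)}\le Ch^{k-i}\|v\|_{W^{k,p}_h(\Omega_h)},
\end{equation*}
which is the desired inequality. The only conceivable obstacle is verifying that the glued map $\widetilde\Psi_h$ is indeed globally continuous and that the constants produced by the chain rule remain $h$-independent up to order $r+1$; both are immediate consequences of Lenoir's construction and of Lemma~\ref{Lemma:basic}(4), so no essential difficulty arises.
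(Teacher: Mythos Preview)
Your proposal is correct and follows exactly the approach the paper takes: define $\mathcal I^h$ by transporting the classical Scott--Zhang operator from the flat triangulation $\widetilde{\mathscr K}$ via $\widetilde\Psi_h$, then invoke the elementwise norm equivalence furnished by Lemma~\ref{Lemma:basic}(4)--(5) to pull the flat error bound back to $\Omega_h$. The paper's own argument is in fact just the one-sentence sketch preceding the lemma (``Since the maps $\widetilde\Psi_h$ induce norm equivalence on every simplex\dots''), so your write-up is a faithful and more detailed expansion of it.
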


The inverse estimate for isoparametric finite elements follows from Lemma \ref{Lemma:basic}, Part 1.  This is presented in the following lemma. 

\begin{lemma}[Inverse estimate]\label{a00}
For $1\leq k\leq l\leq r+1$ and $1\leq p,q\leq \infty$ the following estimate holds: 
\begin{align}\label{eq-inverse}
    \|\uh\|_{W^{l,p}(\check{K})}\leq C h^{k-l+N/p-N/q}\|\uh\|_{W^{k,q}(\check{K})}
    \quad\forall\, \uh\in  S_h(\Omega) ,\,\,\forall\, \check{K}\in \check{\K} . 
\end{align}
\end{lemma}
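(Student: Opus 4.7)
The plan is to pull back to the reference simplex $\hat K$, where the finite element functions are polynomials of bounded degree and all Sobolev norms are equivalent, and then track the powers of $h$ arising from the change of variables via Lemma~\ref{Lemma:basic}.

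More precisely, for a fixed $\check K = \Phi_{h,K}(K) \in \check{\mathscr K}$, I would introduce the composition $G_K := \Phi_{h,K}\circ F_K : \hat K \to \check K$ and set $\hat v := \check u_h \circ G_K$. By the definition of $S_h(\Omega)$ in \eqref{fem-def2} together with \eqref{fem-def1}, $\hat v$ is a polynomial on $\hat K$ of degree at most $r$, so it belongs to a fixed finite-dimensional space on which all $W^{m,s}(\hat K)$ norms are mutually equivalent. In particular, for the given $k,l,p,q$,
\begin{equation*}
\|\hat v\|_{W^{l,p}(\hat K)} \le C\,\|\hat v\|_{W^{k,q}(\hat K)},
\end{equation*}
with a constant depending only on $r$ and the reference simplex.

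The next step is to translate this reference-domain inequality into the claimed bound on $\check K$. Using Lemma~\ref{Lemma:basic}, Parts~1 and 2, the map $G_K$ satisfies $\|DG_K\|_{L^\infty}\le Ch$ and $\|DG_K^{-1}\|_{L^\infty}\le Ch^{-1}$, and more generally $\|D^s G_K\|_{L^\infty}\le Ch^s$, $\|D^s G_K^{-1}\|_{L^\infty}\le Ch^{-s}$, since $\Phi_{h,K}$ is an $O(h^r)$-perturbation of the identity while $F_K$ realizes the $O(h)$ scaling. The Jacobian determinant satisfies $|\det DG_K|\sim h^N$. Applying the chain rule (Fa\`a di Bruno) to $\check u_h = \hat v \circ G_K^{-1}$ and changing variables on $\check K$, one obtains, for any $m\ge 0$ and any $s\in[1,\infty]$,
\begin{equation*}
\|\check u_h\|_{W^{m,s}(\check K)} \le C\,h^{-m + N/s}\,\|\hat v\|_{W^{m,s}(\hat K)},
\end{equation*}
and symmetrically the reverse inequality $\|\hat v\|_{W^{m,s}(\hat K)}\le C\,h^{m-N/s}\|\check u_h\|_{W^{m,s}(\check K)}$. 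The Leibniz-type lower-order terms that appear in the chain rule expansion are harmless because on the finite-dimensional polynomial space all lower derivative norms of $\hat v$ are controlled by the top-order one.

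Combining these three estimates (reference-to-physical with exponents $(l,p)$, polynomial norm equivalence on $\hat K$, and physical-to-reference with exponents $(k,q)$) yields
\begin{equation*}
\|\check u_h\|_{W^{l,p}(\check K)} \le C\,h^{-l+N/p}\,\|\hat v\|_{W^{l,p}(\hat K)} \le C\,h^{-l+N/p}\,\|\hat v\|_{W^{k,q}(\hat K)} \le C\,h^{k-l+N/p-N/q}\,\|\check u_h\|_{W^{k,q}(\check K)},
\end{equation*}
which is exactly \eqref{eq-inverse}. The main obstacle is purely bookkeeping: carefully verifying the uniform bounds on $G_K$ and $G_K^{-1}$ and ensuring that the Faà di Bruno expansion does not introduce spurious powers of $h$ beyond those captured by the top-order term; everything else reduces to the standard inverse estimate on $\hat K$.
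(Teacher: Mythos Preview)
Your proposal is correct and is essentially the standard argument the paper has in mind: the paper does not spell out a proof but simply states that the inverse estimate ``follows from Lemma~\ref{Lemma:basic}, Part~1,'' i.e., from the scaling properties of the reference-to-element map, which is exactly the mechanism you exploit via $G_K=\Phi_{h,K}\circ F_K$ and norm equivalence on the polynomial space over $\hat K$. The only minor difference is that you (correctly) also invoke Part~2 of Lemma~\ref{Lemma:basic} to handle the additional factor $\Phi_{h,K}$ when passing all the way to $\check K$, whereas the paper's one-line pointer mentions only Part~1; this is a harmless imprecision on the paper's side, and your more careful bookkeeping fills it in.
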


The following lemma says that the $(r+1)$th-order derivative of a finite element function in $S_h(\Omega)$ can be bounded by its lower-order derivatives. This result is often used to prove a super-approximation property which is stated in Lemma \ref{a2} for iso-parametric finite elements. 

\begin{lemma}\label{a'3}
The following result holds for iso-parametric finite element functions in $S_h(\Omega)${\rm:} 
\begin{align}
    |D^{r+1}\check{v}_h|(x)\leq C\sum_{i=1}^r|D^i\check{v}_h|(x)\quad \forall\, x\in \check{K}
   ,\,\, \forall\,\check{K}\in\check{\K},\,\,\forall\, \check{v}_h\in S_h(\Omega) . 
\end{align}
\end{lemma}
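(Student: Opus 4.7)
My plan is to reduce the claim to the fact that on the reference simplex the pulled-back finite element function is a polynomial of degree $\le r$, so its $(r+1)$th derivative vanishes, and then to read off an identity for $D^{r+1}\check v_h$ via Faà di Bruno's chain rule.

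More precisely, for each $\check K\in\check\K$ set $G_K:=\Phi_{h,K}\circ F_K:\hat K\to\check K$. By the definition of $S_h(\Omega)$ together with the definition of $S_h(\Omega_h)$ in \eqref{fem-def1}, the composition $\check v_h\circ G_K$ equals $(\check v_h\circ\Phi_{h,K})\circ F_K=(v_h|_K)\circ F_K$, which is a polynomial of degree at most $r$ on $\hat K$. Therefore $D^{r+1}(\check v_h\circ G_K)\equiv 0$ on $\hat K$.

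Next I apply Faà di Bruno's formula to $D^{r+1}(\check v_h\circ G_K)$: at any $\hat x\in\hat K$, with $y=G_K(\hat x)\in\check K$,
\begin{equation*}
0=D^{r+1}(\check v_h\circ G_K)(\hat x)=\sum_{\pi\in P(r+1)}\bigl(D^{|\pi|}\check v_h\bigr)(y)\cdot B_\pi(\hat x),
\end{equation*}
where $P(r+1)$ is the set of partitions of $\{1,\dots,r+1\}$ and each $B_\pi(\hat x)$ is a product $\prod_{B\in\pi}D^{|B|}G_K(\hat x)$ of derivatives of $G_K$ of total order $r+1$. I separate out the unique partition $\pi_\star$ into $r+1$ singletons, for which $B_{\pi_\star}=(DG_K)^{\otimes(r+1)}$, and solve for $D^{r+1}\check v_h(y)$ as a multilinear form:
\begin{equation*}
\bigl(D^{r+1}\check v_h\bigr)(y)\bigl[DG_K\,\hat e_1,\dots,DG_K\,\hat e_{r+1}\bigr]=-\sum_{\pi\ne\pi_\star}\bigl(D^{|\pi|}\check v_h\bigr)(y)\cdot B_\pi(\hat x)[\hat e_1,\dots,\hat e_{r+1}].
\end{equation*}
Since $DG_K(\hat x)$ is a linear isomorphism, I plug in $\hat e_j=(DG_K)^{-1}e_j$ for arbitrary unit vectors $e_j$ on the physical side to recover $D^{r+1}\check v_h(y)$ itself.

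Finally I insert the scaling bounds from Lemma \ref{Lemma:basic}. Writing $DG_K=(D\Phi_{h,K}\circ F_K)\,DF_K$ and using \eqref{F_K-property} together with $\|D\Phi_{h,K}-{\rm Id}\|_{L^\infty}\lesssim h^{r}$, the chain rule yields $\|D^s G_K\|_{L^\infty(\hat K)}\le C h^s$ for every $1\le s\le r+1$, and $\|(DG_K)^{-1}\|_{L^\infty(\hat K)}\le Ch^{-1}$. For any partition $\pi$ with blocks of sizes $n_1,\dots,n_{|\pi|}$ summing to $r+1$, this gives $|B_\pi|\le Ch^{r+1}$, while each of the $r+1$ applications of $(DG_K)^{-1}$ contributes a factor $h^{-1}$. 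The two factors $h^{r+1}$ and $h^{-(r+1)}$ exactly cancel, leaving the desired pointwise bound
\begin{equation*}
|D^{r+1}\check v_h|(y)\le C\sum_{i=1}^{r}|D^{i}\check v_h|(y),
\end{equation*}
with $C$ independent of $h$ (the sum starts at $i=1$ because $\pi=\{\{1,\dots,r+1\}\}$ produces $|\pi|=1$, and no partition has $|\pi|=0$). The main delicate point, which I would double-check carefully in the full write-up, is the bookkeeping of the multilinear inversion step — ensuring that the operator norm of $(DG_K)^{-1}$ appears exactly $r+1$ times and matches the $h^{r+1}$ coming from Faà di Bruno, so that the estimate is genuinely scale-invariant rather than losing powers of $h$.
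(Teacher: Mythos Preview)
Your proof is correct and follows essentially the same strategy as the paper: both exploit that the pullback of $\check v_h$ to a suitable simplex is a polynomial of degree $\le r$, so its $(r+1)$th derivative vanishes, and then recover $D^{r+1}\check v_h$ via the Fa\`a di Bruno chain rule. The only technical difference is the choice of intermediate simplex: the paper pulls back via $M_K=\Phi_{h,K}\circ\widetilde\Psi_K$ to the \emph{flat} simplex $\widetilde K$ (of diameter $\sim h$), for which all derivatives of $M_K$ and $M_K^{-1}$ are $O(1)$ and no $h$-bookkeeping is needed, whereas you pull back via $G_K=\Phi_{h,K}\circ F_K$ to the \emph{reference} simplex $\hat K$, which forces you to track the powers $h^{r+1}$ from $B_\pi$ against $h^{-(r+1)}$ from the $(DG_K)^{-1}$ factors and verify they cancel---as they do.
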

\begin{proof}
Let $M_K:= \Phi_{h,K}\circ \widetilde{\Psi}_K$, which is a diffeomorpshism between the flat simplex $\widetilde K$ and the curved simplex $\check{K}$ (according to Lemma \ref{Lemma:basic}), satisfying the following estimates:  
$$
\|D^s M_K\|_{L^\infty(\widetilde{K})}\leq C 
\quad\mbox{and}\quad \|D^s M^{-1}_K\|_{L^\infty(\check{K})}\leq C\quad \forall\, 1\leq s\leq r+1 . 
$$
According to the definition of $S_h(\Omega)$, a function $\check{v}_h$ is in $S_h(\Omega)$ if and only if the pull-back function $\check{v}_h\circ M_K$ is a polynomial degree\,$\le r$ on the flat simplex $\widetilde{K}$. Therefore, from the estimate on higher order derivatives of composed functions (see \cite[Lemma 3]{CiarletRaviart_1973}), we have 
\begin{align*}
    |D^{r+1}\check{v}_h|(x)
    =&|D^{r+1}((\vh\circ M_K)\circ M^{-1}_K)|(x)\\
    \leq&C\sum_{l=1}^{r+1}|D^{l}(\vh\circ M_K)(M_K^{-1}(x))|\sum_{i\in I(l,r+1)}|DM_K^{-1}(x)|^{i_1}|D^2M_K^{-1}(x)|^{i_2}...|D^{r+1}M_K^{-1}(x)|^{i_{r+1}}\\
    \leq& C\sum_{l=1}^{r+1}|D^l(\vh\circ M_K)|(M^{-1}_K(x))\\
    =& C\sum_{l=1}^r |D^{l}(\vh\circ M_K)|(M_K^{-1}(x)) ,
\end{align*}
where 
\begin{align*}
	I(l,r+1):=\{i=(i_1,i_2,...,i_{r+1})\in \mathbb{Z}^{r+1}: i_k\geq 0,\sum_{k=1}^{r+1} i_k=l;\sum_{k=1}^{r+1} ki_k=r+1\}.
\end{align*}
We can estimate $|D^l(\vh\circ M_K)|(M_K^{-1}(x))$ using the same estimate on higher order derivatives of composed function 
\begin{align*}
	&|D^l(\vh\circ M_K)|(M_K^{-1}(x))\\
	\leq& C\sum_{k=1}^l|D^k\vh|(x)\sum_{i\in I(k,l)}|DM_K(M_K^{-1}(x))|^{i_1}|D^2M_K(M_K^{-1}(x))|^{i_2}...|D^{l}M_K(M_K^{-1}(x))|^{i_l}\\
	\leq &C\sum_{k=1}^l|D^k\vh|(x)
\end{align*}
The result of Lemma \ref{a'3} is obtained by combining the two estimates above. 
\end{proof}
The result above is the key to the superapproximation results for the isoparametric case. For the standard elements the $r + 1$ derivative just vanishes.

\begin{lemma}[Super-approximation]\label{a2}
Let $\omega\in C^\infty_0(\mathbb{R}^N)$ be a smooth cut-off function such that $0\leq \omega\leq 1$ and $\rm{supp}(\omega) \cap \Omega\subset\Omega_0\subset\Omega$, with 
$\Omega_0(d):=\{x\in \Omega: {\rm dist}(x,\Omega_0)\leq d\}\subset\Omega_1$ for some $d>h$. Then the following estimate holds for $\check{v}_h\in \mathring S_h(\Omega)${\rm:}
\begin{align*}
&\|\omega \check{v}_h - \check{I}_h(\omega \check{v}_h)\|_{H^1(\Omega_1)}\leq Ch \Big(\sum_{j=1}^r h^{j-1}\|\omega\|_{W^{j,\infty}(\mathbb{R}^N)}\Big) \|\check{v}_h\|_{H^1(\Omega_1)}+Ch^r\|\omega\|_{r+1,\infty}\|\check{v}_h\|_{L^2(\Omega_1)} , \\ 
&
 \|\omega \check{v}_h-\check{I}_h(\omega \check{v}_h)\|_{H^1(\Omega_1)}\leq C\Big(\sum_{j=1}^{r+1}h^{j-1}\|\omega\|_{W^{j,\infty}(\mathbb{R}^N)}\Big)  \|\check{v}_h\|_{L^2(\Omega_1)}   . 
\end{align*}
\end{lemma}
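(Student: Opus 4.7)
The plan is to establish both inequalities element by element on each curved simplex $\check K\in\check\K$ with $\check K\subset\Omega_1$, and then sum up. The ingredients are the interpolation error of Lemma \ref{Proposition-A0} with $k=r+1$ and $i=1$, which yields $|u-\check I_{h,K}u|_{H^1(\check K)}\le Ch^r|u|_{H^{r+1}(\check K)}$ applied to $u=\omega\vh$; Lemma \ref{a'3} to handle the fact that $|D^{r+1}\vh|$ no longer vanishes on a curved element; and the inverse estimate of Lemma \ref{a00} to trade higher derivatives of $\vh$ for negative powers of $h$. Note that $\omega\vh$ is continuous, so the nodal interpolation is well-defined, and $(r+1)\cdot 2>N$ for $N\in\{2,3\}$ and $r\ge 1$, so the embedding hypothesis of Lemma \ref{Proposition-A0} is met.

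First I would expand $D^{r+1}(\omega\vh)$ by the Leibniz rule,
\begin{align*}
|D^{r+1}(\omega\vh)|(x)\le C\sum_{k=0}^{r+1}|D^k\omega|(x)\,|D^{r+1-k}\vh|(x).
\end{align*}
For $k=0$ I would invoke Lemma \ref{a'3} to bound $|D^{r+1}\vh|(x)$ by $C\sum_{i=1}^{r}|D^i\vh|(x)$; for $k\ge 1$ the factor $|D^k\omega|$ is controlled by $\|\omega\|_{W^{k,\infty}}$. Integrating over $\check K$, multiplying by $h^r$, and re-indexing $j=r+1-k$ yields
\begin{align*}
h^r|\omega\vh|_{H^{r+1}(\check K)}\le C\sum_{j=1}^{r+1}h^r\|\omega\|_{W^{j,\infty}}\|D^{r+1-j}\vh\|_{L^2(\check K)}+C\|\omega\|_{L^\infty}\sum_{i=1}^{r}h^r\|D^i\vh\|_{L^2(\check K)}.
\end{align*}

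Next I would apply the inverse estimate of Lemma \ref{a00} to reduce the order of derivatives on $\vh$. For the second claimed inequality I would push every $\|D^m\vh\|_{L^2(\check K)}$ all the way down to $\|\vh\|_{L^2(\check K)}$ (using $\|D^m\vh\|_{L^2(\check K)}\le Ch^{-m}\|\vh\|_{L^2(\check K)}$), which collapses the right-hand side to the stated $C\sum_{j=1}^{r+1}h^{j-1}\|\omega\|_{W^{j,\infty}}\|\vh\|_{L^2(\check K)}$. For the first claimed inequality I would instead stop at $s=1$ for $j=1,\ldots,r$ (each term becoming $h\cdot h^{j-1}\|\omega\|_{W^{j,\infty}}\|\nabla\vh\|_{L^2(\check K)}$) and let only the $j=r+1$ term descend to $s=0$, producing $Ch^r\|\omega\|_{W^{r+1,\infty}}\|\vh\|_{L^2(\check K)}$. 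The leftover terms $\|\omega\|_{L^\infty}\sum_{i=1}^{r}h^r\|D^i\vh\|_{L^2(\check K)}$ inherited from Lemma \ref{a'3} are each bounded by $Ch\|\omega\|_{L^\infty}\|\nabla\vh\|_{L^2(\check K)}$ via the inverse estimate and are absorbed into the $j=1$ term on the right-hand side. Summing the element-wise bounds over $\check K\subset\Omega_1$ yields the two stated $H^1(\Omega_1)$ estimates.

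The main obstacle is precisely the failure of the classical identity $D^{r+1}\vh\equiv 0$ for isoparametric elements: in the flat case, the $k=0$ Leibniz term drops out and the super-approximation estimate is immediate, whereas here the chain-rule estimate of Lemma \ref{a'3} is indispensable to handle it. The only nontrivial bookkeeping is verifying that the extra lower-order derivatives of $\vh$ produced by that estimate are absorbed by the weighted structure of the right-hand side, which the inverse estimate confirms as above.
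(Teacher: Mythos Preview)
Your proposal is correct and follows essentially the same route as the paper: localize to elements meeting $\Omega_0$, apply the interpolation bound of Lemma~\ref{Proposition-A0}, expand $D^{r+1}(\omega\vh)$ by Leibniz, invoke Lemma~\ref{a'3} for the $k=0$ term, and finish with the inverse estimate of Lemma~\ref{a00}. One minor slip: Lemma~\ref{Proposition-A0} as stated gives the full norm $\|u\|_{H^{r+1}(\check K)}$ on the right, not the seminorm $|u|_{H^{r+1}(\check K)}$, but the additional lower-order terms $\|D^j(\omega\vh)\|_{L^2(\check K)}$ for $j\le r$ are handled by the same Leibniz-plus-inverse argument and are strictly easier (no $D^{r+1}\vh$ appears).
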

\begin{proof}
Since ${\rm supp}(\omega \check{v}_h)\subset \Omega_0$, it follows that $\check{I}_h(\omega \check{v}_h)$ vanishes on all $\check{K}$ such that $\check{K}\cap \Omega_0=\emptyset$. 
Since  $\Omega_0(d)\subset\Omega_1$, all the simplices $\check{K}$ such that $\check{K}\cap \Omega_0\neq \emptyset$ are contained in $\Omega_1$. Therefore, we have
\begin{align}\label{a2eq0}
\begin{aligned}
\|\omega \check{v}_h -\check{I}_h (\omega \check{v}_h)\|_{H^1(\Omega_1)}^2
&= \sum_{\check{K}\cap \Omega_0\neq\emptyset} \|\omega \check{v}_h -\check{I}_h (\omega \check{v}_h)\|^2_{H^1(\check{K})} \\
&\le \sum_{\check{K}\cap \Omega_0\neq\emptyset} Ch^{2r}\|\omega \check{v}_h \|_{H^{r+1}(\check{K})}^2 \\
&\le \sum_{\check{K}\cap \Omega_0\neq\emptyset} 
    Ch^{2r}\Big( | \check{v}_h |_{H^{r+1}(\check{K})}^2
    +\sum_{i=0}^r\|\omega\|_{W^{r+1-i,\infty}(\R^N)}^2 \|\check{v}_h\|_{H^i(\check{K})}^2\Big) . 
\end{aligned}
\end{align}
The term $| \check{v}_h |_{H^{r+1}(\check{K})}^2$ can be estimated by using Lemma \ref{a'3}, i.e., \begin{align}\label{a2eq1}
   | \check{v}_h |_{H^{r+1}(\check{K})}^2\leq C\sum_{i=1}^{r}| \check{v}_h |_{H^{i}(\check{K})}^2 . 
\end{align}
For $0\le i\le r$, the term $| \check{v}_h |_{H^{i}(\check{K})}$ can be estimated by using the inverse estimate for isoparametric finite element functions (see Lemma \ref{a00}). This yields the first result of Lemma \ref{a2}. 
The second result can be proved similarly. 
\end{proof}

\subsection{The perturbed bilinear form associated to the isoparametric FEM}
\label{sec:B-Ah}


By using the notation $\check{u}_h\circ\Phi_h=u_h$ and $\check{v}_h\circ\Phi_h=v_h$ for $u_h,v_h\in S_h(\Omega_h)$, the following identity holds: 
\begin{align}\label{Ah-uh-vh=0}
\int_{\Omega_h}\nabla u_h\cdot\nabla v_h \d x 
&=
\int_{\Omega_h}\nabla (\check{u}_h\circ\Phi_h)\cdot\nabla (\check{u}_h\circ\Phi_h) \d x \notag\\
&=
\int_{\Omega} A_h\nabla \check{u}_h \cdot \nabla \check{v}_h \d x 
\quad\forall\, \check{v}_h\in \mathring S_h(\Omega) ,
\end{align}
where
$$
A_h= (\nabla\Phi_{h} (\nabla\Phi_{h})^\top J^{-1})\circ \Phi^{-1}_h  
$$
is a piecewise smooth (globally discontinuous) and symmetric matrix-valued function, and $J=\det(\nabla\Phi_h) \in L^\infty(\Omega_h)$ is the Jacobian of the mapping $\Phi_h:\Omega_h\rightarrow\Omega$, piecewisely defined on every simplex $K\in\K$. 
Therefore, a function $u_h\in S_h(\Omega_h)$ is discrete harmonic if and only if 
\begin{align}\label{Ah-uh-vh=0}
\int_{\Omega} A_h\nabla \check{u}_h \cdot \nabla \check{v}_h \d x = 0 
\quad\forall\, \check{v}_h\in \mathring S_h(\Omega) ,
\end{align}
Identity \eqref{Ah-uh-vh=0} will be used frequently in the following proof.

Since the map $\Phi_h:\Omega_h\rightarrow\Omega$ is close to the identity map ${\rm Id}:\R^N\rightarrow\R^N$ (which satisfies ${\rm Id}(x)\equiv x$), it follows that the matrix $A_h$ is close to the identity matrix. In particular, the following results are corollaries of the second statement of Lemma \ref{Lemma:basic}: 
\begin{align}\label{eq:perturb}
 \|\nabla^j(\Phi_h-{\rm Id})\|_{L^\infty(\Omega_h)}\leq Ch^{r+1-j} \quad\mbox{and}\quad 
 \|A_h-I\|_{L^\infty(\Omega)}\leq Ch^r ,\quad\mbox{for}\,\,\, j=0,1 .
\end{align}
Therefore, for sufficiently small mesh size $h$, the perturbed bilinear form $\check{B}_h: H^1(\Omega)\times H^1(\Omega)\rightarrow\R$ defined by 
\begin{align}\label{perturbed-bilinear}
\check{B}_h(v,\chi)=\int_{\Omega}A_h\nabla v\cdot\nabla \chi\d x 
\end{align}
is continuous and coercive on $H^1_0(\Omega)$, i.e.,
\begin{align}\label{B-coercivity}
\begin{aligned}
&\check{B}_h(v,\chi) \le C\|\nabla v\|_{L^2(\Omega)}\|\nabla \chi\|_{L^2(\Omega)}
&&\forall\, v,\chi\in H^1(\Omega) , \\
&\check{B}_h(v,v) \ge C^{-1} \|\nabla v\|_{L^2(\Omega)}^2 \sim \|v\|_{H^1(\Omega)}^2
&&\forall\, v\in H^1_0(\Omega). 
\end{aligned}
\end{align}

More precisely, the difference between $\check{B}_h(u,v)$ and $B(u,v)$ is estimated in the following lemma. 
\begin{lemma}\label{variation-crime}
There exists a positive constant $h_1>0$ such that for $h\le h_1$ the following result holds: 
If $1\leq p,q\leq \infty$, $\frac{1}{p}+\frac{1}{q}=1$, and $u\in W^{1,p}(\Omega)$, $v\in W^{1,q}(\Omega)$, then 
$$\left|\check{B}_h(u,v)-B(u,v)\right|\leq Ch^r\|\nabla u\|_{L^p(\Lambda_h)}\|\nabla v\|_{L^q(\Lambda_h)}$$
where $ \Lambda_h:=\{x\in \Omega:{\rm dist}(x,\partial\Omega)\leq 2h\}$. 
\end{lemma}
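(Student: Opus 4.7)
The proof plan is very short because the main analytic work has already been front-loaded into the estimate $\|A_h - I\|_{L^\infty(\Omega)} \le Ch^r$ from \eqref{eq:perturb}. The plan is to combine this pointwise estimate with a support/locality observation that confines the perturbation to a thin tubular neighborhood of $\partial\Omega$, then apply H\"older.

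First, I would write the difference in a form that isolates $A_h - I$. Since $B(u,v) = \int_\Omega \nabla u \cdot \nabla v \, dx$ and $\check{B}_h(u,v) = \int_\Omega A_h \nabla u \cdot \nabla v \, dx$ by the definition \eqref{perturbed-bilinear}, we have
\begin{equation*}
\check{B}_h(u,v) - B(u,v) = \int_\Omega (A_h - I)\, \nabla u \cdot \nabla v \, dx .
\end{equation*}

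Second, I would argue that the integrand is supported in $\Lambda_h$. Recall from Section~\ref{sec:isopara} that in Lenoir's construction the interior simplices of $\K$ are flat with at most one vertex on $\partial\Omega$, and on each such simplex $\Phi_{h,K}$ is the identity map (the curved correction is performed only on the boundary layer of simplices). Consequently $A_h \equiv I$ on every $\check{K} = \Phi_h(K)$ corresponding to a flat interior simplex, and the set $\{A_h \ne I\}$ is contained in the union of the boundary simplices $\check{K}$. Each such $\check{K}$ has diameter $\le Ch$ and touches $\partial\Omega$, so it lies within ${\rm dist}(\cdot,\partial\Omega) \le Ch$. Taking $h$ small enough that $Ch \le 2h$ (adjusting constants), this union is contained in $\Lambda_h = \{x \in \Omega : {\rm dist}(x,\partial\Omega) \le 2h\}$.

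Third, I would finish with H\"older's inequality and the $L^\infty$ bound from \eqref{eq:perturb}:
\begin{equation*}
\left|\check{B}_h(u,v) - B(u,v)\right| = \left|\int_{\Lambda_h} (A_h - I)\, \nabla u \cdot \nabla v \, dx\right| \le \|A_h - I\|_{L^\infty(\Lambda_h)} \|\nabla u\|_{L^p(\Lambda_h)}\|\nabla v\|_{L^q(\Lambda_h)},
\end{equation*}
and substitute $\|A_h - I\|_{L^\infty(\Omega)} \le Ch^r$ to obtain the claimed bound. The threshold $h_1$ is chosen so that this latter estimate is valid and so that the boundary layer is contained in $\Lambda_h$.

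There is no real obstacle here; the only conceptually important step is the locality claim that $\Phi_h - {\rm Id}$ (and hence $A_h - I$) is supported in the boundary layer of simplices, which is an intrinsic feature of Lenoir's construction and is already baked into item~2 of Lemma~\ref{Lemma:basic}. Everything else is a direct application of H\"older's inequality.
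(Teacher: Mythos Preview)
Your proposal is correct and follows essentially the same approach as the paper: write the difference as $\int_\Omega (A_h-I)\nabla u\cdot\nabla v$, observe that $A_h=I$ away from the boundary layer of simplices (since $\Phi_h={\rm Id}$ on interior simplices), and apply H\"older together with $\|A_h-I\|_{L^\infty}\le Ch^r$. The paper phrases the locality step slightly differently---it defines $D_h=\{x\in\Omega_h:{\rm dist}(x,\partial\Omega_h)\le h\}$ and then shows $\Phi_h(D_h)\subset\Lambda_h$ via $|\Phi_h(x)-x|\le Ch^{r+1}$---but this is just a minor variant of your direct argument that each boundary $\check K$ touches $\partial\Omega$ and has diameter $\le h+Ch^{r+1}\le 2h$ for small $h$.
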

\begin{proof}
Since $\Phi_h={\rm Id}$ at all interior simplices, it follows that $A_h\circ\Phi_h=I$ outside the subdomain $D_h=\{x\in \Omega_h:{\rm dist}(x,\partial\Omega_h)\leq h\}$. Correspondingly, 
$A_h=I$ outside the subdomain $\Phi_h(D_h)$ and therefore,
\begin{align*}
\left|\check{B}_h(u,v)-B(u,v)\right|
&\leq \|A_h-I\|_{L^\infty(\Phi_h(D_h)} \|\nabla u\|_{L^p(\Phi_h(D_h))}\|\nabla v\|_{L^q(\Phi_h(D_h))}\\
&\leq Ch^r\|\nabla u\|_{L^p(\Phi_h(D_h))}\|\nabla v\|_{L^q(\Phi_h(D_h))}.
\end{align*}
If $x\in D_h$, then there exists $x'\in\partial\Omega_h$ such that $|x-x'|={\rm dist}(x,\partial\Omega_h)\le h$ and  
$$
|\Phi_h(x)-\Phi_h(x')|
\le 
|\Phi_h(x) -x |+ |x-x'| + |x'-\Phi_h(x')|
\le Ch^{r+1} + h + Ch^{r+1} ,
$$
which implies that 
$$
{\rm dist}(\Phi_h(x),\partial\Omega) 
\le Ch^{r+1} + h .
$$
For sufficiently small $h$ we obtain ${\rm dist}(\Phi_h(x),\partial\Omega) \le 2h$ and therefore $\Phi_h(D_h)\subset\Lambda_h$. 
\end{proof}

\subsection{Reduction of the problem} \label{sec: start of the proof}

Let $x_0\in\overline\Omega$ be a point satisfying 
$$
|\check{u}_h(x_0)|=\|\check{u}_h\|_{L^\infty(\Omega)}
\quad\mbox{with}\quad d={\rm dist}(x_0,\partial\Omega) .
$$

If $d\ge 2kh$ for some fixed $k\ge 1$, i.e., $x_0$ is relatively far away from the boundary $\partial\Omega$, then we can choose $\Omega_1=\{x_0\}$ and $\Omega_2=S_{d/2}(x_0)$ and use the interior $L^\infty$ estimate established in \cite[Corollary 5.1]{Schatz-Wahlbin-1977}. This yields the following result: 
$$
| \uh(x_{0})|
\le
C d^{-\frac{N}2} \| \uh\|_{L^{2}(S_{d}(x_{0}))} . 
$$

Otherwise, we have $d < 2kh$. In this case, assuming that $x_0\in\check{K}$ for some curved simplex $\check{K}\in\check{\K}$, by the inverse estimate in Lemma \ref{a00} we have
$$
| \uh(x_{0})|
=
\|\uh\|_{L^\infty(\check{K})}
\le
C h^{-\frac{N}2}\|\uh\|_{L^2(\check{K})}
\leq 
C h^{-\frac{N}2}
\| \uh\|_{L^{2}(S_{2kh}(x_{0}))}
.
$$

Overall, for either $d\ge 2kh$ or $d<2kh$, the following estimate holds:
\begin{align}\label{Linfty-1}
| \uh(x_{0})|
\le
C \rho^{-\frac{N}2}
\| \uh\|_{L^{2}(S_{\rho}(x_{0}))},\quad\mbox{with}\quad \rho=d+2kh.
\end{align}


To estimate the term $\| \uh\|_{L^{2}(S_{\rho}(x_{0}))}$ on the right-hand side of \eqref{Linfty-1}, we use the following duality property:
$$
\| \uh\|_{L^{2}(S_{\rho}(x_{0}))}
= \sup_{
\begin{subarray}{c}
{\rm supp}(\varphi)\subset S_{\rho}(x_{0}) \\
\|\varphi\|_{L^{2}(S_{\rho}(x_{0}))}\le 1
\end{subarray}
}|( \uh,\varphi)| ,
$$
where $(\cdot,\cdot)$ denotes the inner product of $L^2(\Omega)$ 
(or $L^2(\Omega)^N$ for vector-valued functions), i.e., 
$$
(u,v):=\int_{\Omega}u\cdot v \d x . 
$$
Hence, there exists a function $\varphi\in C_0^\infty(\Omega)$ with the following properties:
\begin{align}\label{L2-varphi}
&{\rm supp}(\varphi)\subset S_{\rho}(x_{0}),
\quad
\|\varphi\|_{L^{2}(S_{\rho}(x_{0}))}\le 1 ,\\
&\label{L2-Srho-uh}
\| \uh\|_{L^{2}(S_{\rho}(x_{0}))}
\le 2|( \uh,\varphi)| .
\end{align}
For this function $\varphi$, we define $v\in H^1_0(\Omega)$ and $u\in H^1(\Omega)$ to be the solutions of the following elliptic equations (in the weak form): 
\begin{align}\label{weak-PDE-v}
\left\{\begin{aligned}
(A_h \nabla v,\nabla\chi) &=(\varphi,\chi) 
&&\forall\,\chi\in H^1_0(\Omega),\\
v&= 0 &&\mbox{on}\,\,\,\partial\Omega ,
\end{aligned}\right.
\end{align}
and 
\begin{align}\label{weak-form-u}
\hspace{-15pt}
\left\{\begin{aligned}
(A_h\nabla u,\nabla \chi)&=0 &&\forall\, \chi\in H^1_0(\Omega),\\
u&= \uh &&\mbox{on}\,\,\,\partial\Omega , 
\end{aligned}\right.
\end{align}
respectively. 
The maximum principle of the continuous problem \eqref{weak-form-u} implies that 
\begin{align}\label{max-principle-PDE}
\|u\|_{L^{\infty}(\Omega)} \leqslant \| \uh\|_{L^{\infty}(\partial \Omega)}.
\end{align}
Therefore, we have
\begin{align}\label{L2-uh-1}
\| \uh\|_{L^{2}(S_{\rho}(x_{0}))}
&\le
2|( \uh,\varphi)|
&&\mbox{(here we have used \eqref{L2-Srho-uh})} \nonumber \\
&=2|( \uh-u,\varphi) + (u,\varphi)| \nonumber \\
&= 2|(A_h\nabla( \uh-u),\nabla v) + (u,\varphi)|
&&\mbox{(here we have used \eqref{weak-PDE-v})} \nonumber\\
&=2| (A_h\nabla  \uh ,\nabla v) + (u,\varphi)|
&&\mbox{(here we have used \eqref{weak-form-u})} \nonumber\\
&\le 2|(A_h\nabla  \uh ,\nabla v)| + 2\|u\|_{L^\infty(\Omega)}\|\varphi\|_{L^1(S_\rho(x_0))} 
&&\mbox{(since ${\rm supp}(\varphi)\subset S_{\rho}(x_{0})$)}\nonumber\\
&\le 2|(A_h\nabla  \uh ,\nabla v)| + C\rho^{\frac{N}2} \| \uh\|_{L^\infty(\partial\Omega)}\|\varphi\|_{L^2(S_{\rho}(x_{0}))} , \end{align}
where we have used \eqref{max-principle-PDE} and the H\"older inequality in deriving the last inequality. Combing inequalities \eqref{Linfty-1} and \eqref{L2-uh-1}, we have
\begin{align}\label{linfty0}
    \|\uh\|_{L^\infty(\Omega)}=|\check{u}_h(x_0)|\leq C\rho^{-\frac{N}2}|(A_h\nabla\uh,\nabla v)|+C\|\uh\|_{L^\infty(\partial\Omega)}
\end{align}
where we have used the fact that $\|\varphi\|_{L^2(S_\rho(x_0))}\leq 1$. 

It remains to estimate $\rho^{-\frac{N}{2}}|(A_h\nabla \uh,\nabla v)|$. 
To this end, we define $R_h: H^1_0(\Omega)\rightarrow \sh(\Omega)$ to be the Ritz projection associated with the perturbed bilinear form defined in \eqref{perturbed-bilinear}, i.e., 
\begin{align}\label{def-Ritz-proj}
\left(A_h\nabla (v-R_h v),\nabla \check{\chi}_h\right) = 0 
\quad \forall\, \check{\chi}_h\in \mathring S_h(\Omega) ,
\end{align}
which is well defined in view of the coercivity of the bilinear form; see \eqref{B-coercivity}. 
By using identity \eqref{Ah-uh-vh=0} for the discrete harmonic function $u_h$ and the definition of the Ritz projection $R_h$ in \eqref{def-Ritz-proj}, we have 
\begin{align}\label{Ah-graduh-gradv}
    (A_h\nabla \uh,\nabla v)&=(A_h\nabla \uh, \nabla(v-R_hv))\nonumber\\
    &= (A_h\nabla (\uh-\xh), \nabla(v-R_hv))\quad\forall\xh\in\sh(\Omega) . 
\end{align}
In particular, we can choose $\check{\chi}_h=\chi_h\circ\Phi_h^{-1} \in \mathring S_h(\Omega)$ to satisfy $\chi_h=u_h$ on all interior Lagrange nodes while $\chi_h=0$ on all the boundary nodes (which implies $\chi_h=0$ on $\partial\Omega_h$ and therefore $\check{\chi}_h\equiv 0$ on $\partial\Omega$). Then 
\begin{align}\label{xh-lemma}
    \|\xh-\uh\|_{L^\infty(\Omega)}\leq C\|\uh\|_{L^\infty(\partial\Omega)}  .
\end{align}
Let $\Lambda_h=\{x\in \Omega: {\rm dist}(x,\partial\Omega)\le 2h\}$ be a neighborhood of the boundary $\partial\Omega$, when $h$ sufficiently small, $\uh-\xh=0$ outside $\Lambda_h$. Then 
\begin{align}\label{A_h-grad-uh-chih}
    |(A_h\nabla (\uh-\xh), \nabla(v-R_hv))| &\leq C\|\nabla(\xh-\uh)\|_{L^\infty(\Omega)}\|\nabla(v-R_hv)\|_{L^1(\Lambda_h)}\nonumber\\
    &\leq Ch^{-1}\|\uh\|_{L^\infty(\partial\Omega)}\|\nabla(v-R_hv)\|_{L^1(\Lambda_h)} , 
\end{align}
where we have used \eqref{xh-lemma} and the inverse estimate for finite element functions. Substituting \eqref{Ah-graduh-gradv} and \eqref{A_h-grad-uh-chih} into \eqref{linfty0}, we obtain 
\begin{align}\label{Linfty-uh-2}
\| \uh\|_{L^\infty(\Omega)}
\le
C\big(\rho^{-\frac{N}2} h^{-1} \|\nabla (v- R_hv)\|_{L^1(\Lambda_h)}
+1)
\| u_h\|_{L^\infty(\partial\Omega)} .
\end{align}
The proof of Theorem \ref{THM1} will be completed if the following result holds:  
\begin{align}\label{to-prove}
\rho^{-\frac{N}2} h^{-1} \|\nabla (v- R_hv)\|_{L^1(\Lambda_h)}\le C,
\end{align}
which will be proved in the following subsections.

\subsection{Regularity decomposition}
\label{section:Reguliarty}

In order to estimate the left-hand side of \eqref{to-prove}, we need to use a local energy estimate and a duality argument, which is based on the regularity result of the following elliptic equation (in the weak form): Find $v\in H^1_0(\Omega)$ such that 
\begin{align}\label{weak-PDE}
(A_h\nabla v,\nabla\chi)=(f,\chi)
\quad\forall\,\chi\in H^1_0(\Omega) ,
\end{align}
where $A_h$ is a globally discontinuous matrix-valued function defined in Section \ref{sec:B-Ah}.

Due to the discontinuity of the coefficient matrix $A_h$, the standard $H^2$ regularity does not hold for the elliptic equation \eqref{weak-PDE}. We decompose the solution $v\in H^1_0(\Omega)$ of equation \eqref{weak-PDE} into the following two parts: 
\begin{align}\label{v-decomp}
    v=v_1+v_2 , 
\end{align}
where $v_1\in H^1_0(\Omega)$ and $v_2\in H^1_0(\Omega)$ are the weak solutions of the equations 
\begin{align}\label{weak-pde-v1}
&    (\nabla v_1, \nabla\chi)=(f,\chi)&& \forall \chi\in H^1_0(\Omega) ,\\
\label{weak-pde-v2}
&    (A_h\nabla v_2,\nabla\chi)= ((I-A_h)\nabla v_1, \nabla \chi)&&\forall\chi\in H^1_0(\Omega) .
\end{align}
Equation \eqref{weak-pde-v1} has a constant coefficient and therefore the classical $W^{2,q}$ regularity estimate holds for $1<q<2+\varepsilon$, for some $\varepsilon>0$ which depends on the interior angles at the edges and corners of the domain $\Omega$ (see \cite[Corollaries 3.7, 3.9 and 3.12]{1992-Dauge}), i.e., 
\begin{align}\label{eq:r1}
    \|v_1\|_{W^{2,q}(\Omega)}\leq C_q \|f\|_{L^q(\Omega)} \quad\forall\, 1<q<2+\varepsilon. 
\end{align}
Since equation \eqref{weak-pde-v2} has discontinuous coefficients, the $W^{2,q}$ regularity estimate does not hold. We have to estimate $v_2$ by using the $W^{1,p}$ estimate in the following lemma.  

\begin{lemma}\label{gradient-lp}
For every $1<p<\infty$ there exists $h_p>0$ {\rm(}which depends on $p${\rm)}, such that for $h\leq h_p$, the solution $w\in H^1_0(\Omega)$ of the equation 
\begin{align}\label{weak-pde-v}
  (A_h\nabla w, \nabla\chi)=(\vec{g},\nabla\chi) \quad\forall \chi\in H^1_0(\Omega)  
  \quad\mbox{with}\quad \vec{g}\in L^p(\Omega)^N\cap L^2(\Omega)^N,
\end{align}
satisfies $w\in W^{1,p}(\Omega)$ and 
\begin{align}\label{eq:grad-lp}
    \|w\|_{W^{1,p}(\Omega)}\leq C_p\|\vec{g}\|_{L^p(\Omega)},
\end{align}
where $C_p$ is a constant which is independent of $h$ {\rm(}possibly depending on $p${\rm)}.
\end{lemma}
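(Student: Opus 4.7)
The plan is to exploit that $A_h$ is a small $L^\infty$ perturbation of the identity, since \eqref{eq:perturb} gives $\|A_h-I\|_{L^\infty(\Omega)}\le Ch^r$. Rewriting equation \eqref{weak-pde-v} in the equivalent form
\begin{align*}
(\nabla w,\nabla\chi)=(\vec g-(A_h-I)\nabla w,\nabla\chi)\qquad\forall\,\chi\in H^1_0(\Omega),
\end{align*}
shows that $w$ is a weak $H^1_0$ solution of the constant-coefficient Poisson equation $-\Delta w=-\nabla\!\cdot\!\bigl(\vec g-(A_h-I)\nabla w\bigr)$ in divergence form. The idea is therefore to apply the $W^{1,p}$ Calder\'on--Zygmund estimate for the Dirichlet Poisson problem on $\Omega$ and absorb the perturbation into the left-hand side once $h$ is sufficiently small.

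The first step is to invoke, for every $1<p<\infty$, the a priori bound
\begin{align*}
\|\nabla u\|_{L^p(\Omega)}\le \widetilde C_p\|\vec F\|_{L^p(\Omega)}
\end{align*}
for the $H^1_0$-solution of $-\Delta u=-\nabla\!\cdot\!\vec F$. This is classical on smooth domains and, for the class of curvilinear polyhedra considered here (edge openings strictly smaller than $\pi$), follows from the corresponding result on convex polyhedra together with the smooth diffeomorphism that flattens the boundary; I would cite \cite{1992-Dauge} or an analogous reference. Applying this to $\vec F=\vec g-(A_h-I)\nabla w$ and using \eqref{eq:perturb} formally yields
\begin{align*}
\|\nabla w\|_{L^p(\Omega)}\le \widetilde C_p\|\vec g\|_{L^p(\Omega)}+\widetilde C_p\, Ch^r\|\nabla w\|_{L^p(\Omega)}.
\end{align*}
Choosing $h_p>0$ so that $\widetilde C_p Ch^r\le 1/2$ for $h\le h_p$ lets us absorb the last term and conclude, via Poincar\'e's inequality, $\|w\|_{W^{1,p}(\Omega)}\le C_p\|\vec g\|_{L^p(\Omega)}$.

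The only nontrivial point is that the absorption argument above is a priori circular: I need $\nabla w\in L^p(\Omega)$ to begin with. For $1<p\le 2$ this is automatic from the energy estimate since $\Omega$ is bounded. For $p>2$ I would remove the circularity by the method of continuity, setting $A_h^t:=I+t(A_h-I)$ for $t\in[0,1]$ and letting $S\subset[0,1]$ be the set of $t$ for which the $W^{1,p}$ estimate holds for the equation with coefficient $A_h^t$. Then $0\in S$ by the pure Poisson regularity, the perturbation estimate above shows that $S$ is open in $[0,1]$, and a standard uniform-bound/weak-compactness argument shows $S$ is closed, so $S=[0,1]$ and in particular $1\in S$. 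Uniqueness of the limit in $H^1_0$ (guaranteed by coercivity \eqref{B-coercivity}) identifies the produced $W^{1,p}$ function with our $w$, completing the proof.

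The main obstacle, as suggested above, is the availability of the full-range $W^{1,p}$ Calder\'on--Zygmund estimate for the Poisson equation on the curvilinear polyhedral domain $\Omega$; the hypothesis that all edge openings are strictly less than $\pi$ is precisely what is needed for this to hold for every $p\in(1,\infty)$. Once that input is in hand, the remainder of the proof is the straightforward Neumann-series/perturbation bootstrap described above, with the smallness threshold $h_p$ depending on $p$ through the constant $\widetilde C_p$ in the Poisson $W^{1,p}$ estimate.
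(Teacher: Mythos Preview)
Your proof is correct and follows essentially the same approach as the paper: rewrite the equation as a Poisson problem with right-hand side $\vec g-(A_h-I)\nabla w$, invoke the $W^{1,p}$ regularity for the Dirichlet Poisson problem on curvilinear polyhedra with edge openings below $\pi$ (citing \cite{1992-Dauge}), and absorb the perturbation term using $\|A_h-I\|_{L^\infty}\le Ch^r$ for $h$ small. You are in fact more careful than the paper, which applies the absorption inequality without addressing the a priori finiteness of $\|\nabla w\|_{L^p}$ for $p>2$; your method-of-continuity argument closes that gap cleanly.
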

\begin{proof}
We can rewrite equation \eqref{weak-pde-v} into the following form: 
\begin{align*}
  (\nabla w, \nabla\chi)=(\vec{g},\nabla\chi) + ((I-A_h)\nabla w, \nabla\chi) \quad\forall \chi\in H^1_0(\Omega)  , 
\end{align*}
and apply the $W^{1,p}$ regularity estimate for the Poisson equation (which holds in a smooth domain or curvilinear polyhedron with edge openings smaller than $\pi$; see \cite[Corollaries 3.7, 3.9 and 3.12]{1992-Dauge}). 
This yields the following inequality:  
$$
\|w\|_{W^{1,p}(\Omega)}\leq C_p\|\vec{g}\|_{L^p(\Omega)} + C_p\| I-A_h \|_{L^\infty(\Omega)}\|w\|_{W^{1,p}(\Omega)} . 
$$
Since $\|A_h-I\|_{L^\infty}\leq Ch$, for sufficiently small $h$ (depending on $p$) the last term on the right-hand side can be absorbed by the left-hand side. This yields the result of Lemma \ref{gradient-lp}. 
\end{proof}

By combining the $W^{2,q}$ regularity estimate in \eqref{eq:r1} and the $W^{1,p}$ regularity estimate in Lemma \ref{gradient-lp}, we can prove the following result. 

\begin{lemma}\label{general-W1p}
Let $1<p,q<\infty$ be numbers such that $1/q\leq 1/n+1/p$, and assume that $h\leq h_p$, where $h_p$ is given in Lemma \ref{gradient-lp}. 
Let $w\in H^1_0(\Omega)$ be the weak solution of the equation
\begin{align}\label{weak-pde-general}
    (A_h\nabla w,\nabla\chi)=(f,\chi)+(\vec{g},\nabla\chi)\quad\forall \chi\in H^1_0(\Omega)
\end{align}
for some $f\in L^q(\Omega)\cap L^2(\Omega)$ and $\vec{g}\in L^p(\Omega)^N\cap L^2(\Omega)^N$. 
Then $w\in W^{1,p}(\Omega)$ and 
\begin{align}\label{eq:grad-lpq}
    \|w\|_{W^{1,p}(\Omega)}\leq C_p\|f\|_{L^q(\Omega)}+C_p\|\vec{g}\|_{L^p(\Omega)} . 
\end{align}
\end{lemma}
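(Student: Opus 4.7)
The plan is to mirror the decomposition $v=v_1+v_2$ used in \eqref{v-decomp}--\eqref{weak-pde-v2}: I would peel off an auxiliary pure-Poisson part $w_1$ that absorbs the $L^q$ source $f$, leaving a remainder $w_2$ whose governing equation has only a gradient-type right-hand side, to which Lemma \ref{gradient-lp} applies verbatim. The interaction between the two pieces is handled by the smallness $\|I-A_h\|_{L^\infty}\le Ch^r$ from \eqref{eq:perturb}, which absorbs the coupling term into the final constant without iteration.

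Concretely, I would let $w_1\in H^1_0(\Omega)$ be the weak solution of
$(\nabla w_1,\nabla\chi)=(f,\chi)$ for all $\chi\in H^1_0(\Omega)$.
The $W^{2,q}$ regularity \eqref{eq:r1} gives $\|w_1\|_{W^{2,q}(\Omega)}\le C_q\|f\|_{L^q(\Omega)}$, and then the Sobolev embedding $W^{2,q}(\Omega)\hookrightarrow W^{1,p}(\Omega)$, valid precisely under the hypothesis $1/q\le 1/N+1/p$, upgrades this to
$\|w_1\|_{W^{1,p}(\Omega)}\le C_p\|f\|_{L^q(\Omega)}$.
Next, I would set $w_2:=w-w_1\in H^1_0(\Omega)$; subtracting the two defining identities shows
$$(A_h\nabla w_2,\nabla\chi)=((I-A_h)\nabla w_1,\nabla\chi)+(\vec g,\nabla\chi)\quad\forall\,\chi\in H^1_0(\Omega),$$
which has exactly the form \eqref{weak-pde-v} with datum $(I-A_h)\nabla w_1+\vec g\in L^p(\Omega)^N$. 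Applying Lemma \ref{gradient-lp} and the bound $\|I-A_h\|_{L^\infty(\Omega)}\le Ch^r$ yields
$\|w_2\|_{W^{1,p}(\Omega)}\le C_p(\|\nabla w_1\|_{L^p(\Omega)}+\|\vec g\|_{L^p(\Omega)})\le C_p(\|f\|_{L^q(\Omega)}+\|\vec g\|_{L^p(\Omega)})$, and a triangle inequality on $w=w_1+w_2$ closes the argument.

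I do not expect serious obstacles in the estimates themselves: both \eqref{eq:r1} and Lemma \ref{gradient-lp} are already in hand, and the Sobolev embedding is exactly calibrated to the hypothesis on $p,q$. The only delicate point is that \eqref{eq:r1} is stated in a restricted range $1<q<2+\varepsilon$ for a possibly nonsmooth curvilinear polyhedron, so in subsequent applications one must choose the Sobolev pair $(p,q)$ so that $q$ lies in this range; in the smooth or convex cases this restriction disappears entirely.
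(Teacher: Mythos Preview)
Your decomposition $w=w_1+w_2$ and the treatment of $w_2$ via Lemma \ref{gradient-lp} match the paper exactly. The difference lies in how you bound $\|w_1\|_{W^{1,p}(\Omega)}$: you invoke the $W^{2,q}$ estimate \eqref{eq:r1} and then embed $W^{2,q}\hookrightarrow W^{1,p}$, whereas the paper goes directly through the $W^{1,p}$ regularity of the Poisson equation by first showing $\|f\|_{W^{-1,p}(\Omega)}\le C\|f\|_{L^q(\Omega)}$ via the dual embedding $W^{1,p'}\hookrightarrow L^{q'}$.

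The practical consequence is the one you flag yourself: your route inherits the restriction $1<q<2+\varepsilon$ from \eqref{eq:r1}, so it does not prove the lemma in the full generality in which it is stated (arbitrary $1<q<\infty$). The paper's duality argument relies only on $W^{1,p}$ regularity for the Poisson equation on curvilinear polyhedra with edge openings below $\pi$, which holds for all $1<p<\infty$, and therefore covers the full range. For the applications later in Section \ref{sec: main proof} this is harmless---the relevant $q$'s are below $2$---but if you want the lemma as stated, you should swap your $W^{2,q}$ step for the $W^{-1,p}$ duality estimate.
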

\begin{proof}
We consider the decomposition $w=w_1+w_2$ with $w_1,w_2\in H^1_0(\Omega)$ weakly solving
\begin{align*}
\begin{aligned}
& (\nabla w_1, \nabla\chi)=(f,\chi)&& \forall \chi\in H^1_0(\Omega) , \\ 
&(A_h\nabla w_2,\chi)= ((I-A_h)\nabla w_1+\vec{g}, \nabla \chi)&&\forall\chi\in H^1_0(\Omega) .
\end{aligned}
\end{align*}
Note that for $\chi\in W^{1,p'}_0(\Omega)$ where $1/p+1/p'=1$
\begin{align*}
	|(f,\chi)|\leq& \|f\|_{L^q(\Omega)}\|\chi\|_{L^{q'}(\Omega)}\quad\mbox{($1/q+1/q'=1$)}\\
	\leq& C\|f\|_{L^q(\Omega)}\|\chi\|_{W^{1,p'}(\Omega)}\quad\mbox{(embedding $W^{1,p'}\hookrightarrow L^{q'}$ used)},
\end{align*}
therefore we have $\|f\|_{W^{-1,p}(\Omega)}\leq C\|f\|_{L^q(\Omega)}$. By the $W^{1,p}$ regularity estimate for the Poisson equation on curvilinear polyhedron (see \cite[Corollaries 3.7, 3.9 and 3.12]{1992-Dauge}), there holds
\begin{align*}
	\|w_1\|_{W^{1,p}(\Omega)}\leq C_p\|f\|_{W^{-1,p}(\Omega)}\leq C_p\|f\|_{L^q(\Omega)}
\end{align*}
Then we apply the $W^{1,p}$ estimate in Lemma \ref{gradient-lp} to the equation of $w_2$. This yields the following result: 
$$
\|w_2\|_{W^{1,p}(\Omega)}
\leq C_p\|\vec{g}+(I-A_h)\nabla w_1\|_{L^p(\Omega)}
\leq C_p\|\vec{g}\|_{L^p(\Omega)}+C_p\|f\|_{L^q(\Omega)} . 
$$
The result of Lemma \ref{general-W1p} follows from combining the estimates for $w_1$ and $w_2$.
\end{proof}

The following lemma was proved in \cite[Lemma 2.2]{Leykekhman_Li_2021} for polyhedral domains. The proof of this result for smooth domains and curvilinear polyhedron is the same. 

\begin{lemma}\label{W1p0d}
If $\chi\in W^{1,p}_0(\Omega)$ for some $1<p<\infty$ and $x^*\in \partial\Omega$, then
$$
\|\chi\|_{L^{p}(S_{d_*}(x^*))}
\le
Cd_*\|\nabla\chi\|_{L^{p}(\Omega)} , 
$$
where $S_{d_*}(x^*):=\{x\in\Omega:|x-x^*|<d_*\}$.
\end{lemma}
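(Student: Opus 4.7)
The plan is to reduce the inequality to a Poincar\'e-type estimate on the ball $B := \{x \in \R^N : |x - x^*| < d_*\}$. First I would extend $\chi \in W^{1,p}_0(\Omega)$ by zero across $\partial\Omega$ to a function, still denoted $\chi$, in $W^{1,p}(\R^N)$, so that $\chi \equiv 0$ and $\nabla\chi \equiv 0$ almost everywhere on $\R^N \setminus \overline\Omega$. In particular, $\chi$ vanishes on the set $E := B \setminus \overline\Omega$. Since $x^* \in \partial\Omega$ and $\Omega$ is either smooth or a curvilinear polyhedron with edge openings smaller than $\pi$, the complement satisfies a uniform exterior measure density condition at $x^*$: there exist constants $c_0, d_0 > 0$ depending only on $\Omega$ such that $|E| = |B \setminus \overline\Omega| \ge c_0 |B|$ for every $x^* \in \partial\Omega$ and every $d_* \le d_0$.

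With this density input in hand, I would invoke the standard Poincar\'e inequality on $B$: if $u \in W^{1,p}(B)$ vanishes on a measurable subset $E \subset B$ with $|E| \ge c_0 |B|$, then
$$\|u\|_{L^p(B)} \le C d_* \|\nabla u\|_{L^p(B)},$$
where $C$ depends only on $N$, $p$, and $c_0$; this is the standard scaling-plus-compactness consequence of the Rellich--Kondrachov theorem applied on the unit ball. Applying it with $u = \chi|_B$ and $E = B \setminus \overline\Omega$, and restricting the left-hand side to $S_{d_*}(x^*) = B \cap \Omega$, yields
$$\|\chi\|_{L^p(S_{d_*}(x^*))} \le \|\chi\|_{L^p(B)} \le C d_* \|\nabla\chi\|_{L^p(B \cap \Omega)} \le C d_* \|\nabla\chi\|_{L^p(\Omega)},$$
which is the desired estimate. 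For $d_* > d_0$ the inequality is trivial via the global Poincar\'e inequality on $\Omega$, after absorbing the factor $d_0$ into the constant $C$.

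The only step that requires real justification is the uniform exterior density condition, and this is the main technical point. At a smooth boundary point it is immediate from the exterior ball condition. At an edge point with interior opening $\theta < \pi$, the exterior of $\Omega$ locally resembles the product of $\R$ with a planar sector of opening $2\pi - \theta > \pi$, giving a positive two-dimensional density in the normal cross-section and hence positive $N$-dimensional density. At a vertex the exterior of the convex polyhedral cone $K_{x^*}$ occupies a solid region of positive opening, again with positive density. A covering and compactness argument on $\partial\Omega$ then provides constants $c_0, d_0$ that are uniform in $x^*$. No deeper obstacle is expected, since the lemma is essentially a Poincar\'e-type inequality, and the hypothesis that edge openings are smaller than $\pi$ is exactly what guarantees the exterior density required.
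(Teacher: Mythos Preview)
Your argument is correct. The paper itself does not prove this lemma in the body of the text; it simply cites \cite[Lemma~2.2]{Leykekhman_Li_2021} and asserts that the same proof works for curvilinear polyhedra. That argument (a version of which appears commented out in the source) proceeds by extending $\chi$ by zero, passing to spherical coordinates $(\rho,\varphi,\theta)$ centred at $x^*$, and observing that for each fixed $\rho\le d_*$ and $\varphi$ the circle $\theta\mapsto(\rho,\varphi,\theta)$ meets the exterior of $\Omega$; the fundamental theorem of calculus along this arc then bounds $|\widetilde\chi(\rho,\varphi,\theta)|$ by an integral of $|\partial_\theta\widetilde\chi|$, which after H\"older and integration in $(\rho,\varphi,\theta)$ yields the factor $d_*$.

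Your route is genuinely different: you package the geometry into a uniform exterior measure density condition and then invoke the Poincar\'e inequality on $B_{d_*}(x^*)$ for functions vanishing on a subset of fixed positive density. This is cleaner and more modular, separating the geometric input (density of the complement, which follows from the edge-opening hypothesis exactly as you indicate) from the purely analytic Poincar\'e step; it also sidesteps the need to verify that every spherical arc through $x^*$ actually meets $\R^N\setminus\Omega$, which is the implicit geometric content of the cited proof. The spherical-coordinate approach, on the other hand, is slightly more explicit about constants. Both arguments rely on the same geometric fact---that the edge openings being smaller than $\pi$ forces the exterior to be ``thick'' near every boundary point---so neither is materially stronger than the other.
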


\begin{lemma}\label{lemma:w1p-d}
Let $1<p<\infty$ and $h\leq h_p$, where $h_p$ is given in Lemma \ref{gradient-lp}. 
For 
$$f\in L^p(\Omega)\cap L^2(\Omega)\,\,\,\mbox{with}\,\,\, {\rm supp}(f)\subset S_{d_*}(x_0),
\,\,\,\mbox{where\, $x_0\in \overline{\Omega}$\, and\, ${\rm dist}(x_0,\partial\Omega)\leq d_*$},$$ 
the solution $v\in H^1_0(\Omega)$ of equation \eqref{weak-PDE} satisfies
\begin{align}
    \|v\|_{W^{1,p}(\Omega)}\leq C_p d_*\|f\|_{L^p(\Omega)}
\end{align}
\end{lemma}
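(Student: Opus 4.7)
The idea is to combine a duality argument for the Poincaré-type estimate in Lemma \ref{W1p0d} with the decomposition strategy already used in Subsection \ref{section:Reguliarty}. Since ${\rm dist}(x_0,\partial\Omega)\le d_*$, choose $x^*\in\partial\Omega$ with $|x^*-x_0|\le d_*$, so that ${\rm supp}(f)\subset S_{d_*}(x_0)\subset S_{2d_*}(x^*)$. For arbitrary $\chi\in W^{1,p'}_0(\Omega)$ with $1/p+1/p'=1$, Hölder's inequality together with Lemma \ref{W1p0d} applied at $x^*$ yields
\begin{align*}
|(f,\chi)|\le \|f\|_{L^p(\Omega)}\|\chi\|_{L^{p'}(S_{2d_*}(x^*))}\le Cd_*\|f\|_{L^p(\Omega)}\|\nabla\chi\|_{L^{p'}(\Omega)},
\end{align*}
so $f$ lies in $W^{-1,p}(\Omega)$ with $\|f\|_{W^{-1,p}(\Omega)}\le Cd_*\|f\|_{L^p(\Omega)}$.

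Next, decompose $v=v_1+v_2$ with $v_1,v_2\in H^1_0(\Omega)$ defined exactly as in \eqref{weak-pde-v1}--\eqref{weak-pde-v2}. For $v_1$, apply the $W^{1,p}$ regularity for the Poisson equation on smooth domains and on curvilinear polyhedra with edge openings smaller than $\pi$ (Dauge, as cited in the proof of Lemma \ref{general-W1p}) to conclude
\begin{align*}
\|v_1\|_{W^{1,p}(\Omega)}\le C_p\|f\|_{W^{-1,p}(\Omega)}\le C_p d_*\|f\|_{L^p(\Omega)}.
\end{align*}
For $v_2$, invoke Lemma \ref{gradient-lp} (valid for $h\le h_p$) with $\vec g=(I-A_h)\nabla v_1$ and use the uniform bound $\|I-A_h\|_{L^\infty(\Omega)}\le Ch^r$ from \eqref{eq:perturb}, giving
\begin{align*}
\|v_2\|_{W^{1,p}(\Omega)}\le C_p\|(I-A_h)\nabla v_1\|_{L^p(\Omega)}\le C_p h^r\|v_1\|_{W^{1,p}(\Omega)}\le C_p d_*\|f\|_{L^p(\Omega)}.
\end{align*}
Adding these two estimates concludes the proof.

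The only slightly delicate point is the first step: extracting the factor $d_*$ from $(f,\chi)$. This is where the support hypothesis on $f$ and the boundary-nearness of $x_0$ are used in tandem, through the Poincaré-type estimate of Lemma \ref{W1p0d}, which requires $\chi$ to vanish on $\partial\Omega$ and some point of $\partial\Omega$ to sit within distance $d_*$ of ${\rm supp}(f)$. Once $\|f\|_{W^{-1,p}(\Omega)}$ has been bounded by $Cd_*\|f\|_{L^p(\Omega)}$, the remainder of the argument is a direct transcription of the decomposition scheme in Subsection \ref{section:Reguliarty}, so no new analytic ingredient is needed. One should also note that the assumption $f\in L^2(\Omega)$ guarantees that $v,v_1,v_2$ exist as $H^1_0$ weak solutions via Lax--Milgram, so that the $W^{1,p}$ regularity estimates can be applied to already well-defined objects.
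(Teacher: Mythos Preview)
Your proposal is correct and follows essentially the same approach as the paper: you use the same duality estimate via Lemma~\ref{W1p0d} to bound $\|f\|_{W^{-1,p}(\Omega)}$ by $Cd_*\|f\|_{L^p(\Omega)}$, the same decomposition $v=v_1+v_2$, the same $W^{1,p}$ regularity for $v_1$, and the same application of Lemma~\ref{gradient-lp} for $v_2$. The only cosmetic difference is that you invoke the sharper bound $\|I-A_h\|_{L^\infty}\le Ch^r$ from \eqref{eq:perturb} where the paper uses $Ch$, but either suffices.
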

\begin{proof}
We consider the decomposition $v=v_1+v_2$ in \eqref{v-decomp}--\eqref{weak-pde-v2}. If ${\rm dist}(x_0,\partial\Omega)\leq d_*$, then $S_{d_*}(x_0)\subset S_{2d_*}(\bar{x}_0)$ for some $\bar{x}_0\in \partial\Omega$. Note that for $\chi\in W^{1,p'}_0(\Omega)$ where $1/p+1/p'=1$, we have
\begin{align*}
|(f,\chi)|\leq& \|f\|_{L^p(S_{d_*}(x_0))}\|\chi\|_{L^{p'}(S_{d_*}(x_0))}\\
	\leq& \|f\|_{L^p(S_{d_*}(x_0))}\|\chi\|_{L^{p'}(S_{2d_*}(\bar{x}_0))}\\
	\leq& Cd_*\|f\|_{L^p(\Omega)}\|\nabla \chi\|_{L^{p'}(\Omega)},\quad \mbox{(Lemma \ref{W1p0d} used)}
\end{align*}
which implies that $\|f\|_{W^{-1,p}(\Omega)}\leq Cd_*\|f\|_{L^p(\Omega)}$. Thus by the $W^{1,p}$ regularity estimate for the Poisson equation on curvilinear polyhedron (see \cite[Corollaries 3.7, 3.9 and 3.12]{1992-Dauge}), there holds:
$$
\|v_1\|_{W^{1,p}(\Omega)}\leq C_p\|f\|_{W^{-1,p}(\Omega)}\leq C_p d_*\|f\|_{L^p(\Omega)} . 
$$
By applying Lemma \ref{gradient-lp} to equation \eqref{weak-pde-v2}, we obtain 
\begin{align*} 
    \|v_2\|_{W^{1,p}(\Omega)}&\leq C_p\|(I-A_h)\nabla v_1\|_{L^p(\Omega)}
    \leq C_ph\|v_1\|_{W^{1,p}(\Omega)} 
    \leq C_phd_*\|f\|_{L^p(\Omega)} .
\end{align*}
The last two inequalities imply the result of Lemma \ref{lemma:w1p-d}. 
\end{proof}


The next lemma is about the Cacciopoli inequality for harmonic functions which is the same as in \cite[Lemma 8.3]{Schatz-Wahlbin-1978}. 
The result holds for smooth domains and curvilinear polyhedra on which the elliptic $H^2$ regularity result holds for the Poisson equation. 

\begin{lemma}\label{lemma: Caccipoli}
Let $D$ and $D_d$ be two subdomains of $\Omega$ satisfying $D \subset D_d\subset \Omega$, with
$$
D_d=\{x\in \Omega:
dist(x, D)\le d\} ,
$$
where $d$ is a positive constant.
If $v\in H^1_0(\Omega)$ and $v$ is harmonic on $D_d$, i.e.
$$
(\nabla v, \nabla w)=0 \quad \forall w\in H^1_0(D_d) ,
$$
then the following estimates hold:
\begin{subequations}          \label{eq: Caccipoli}
\begin{align}
|v|_{H^2(D)}&\le Cd^{-1}\|v\|_{H^1(D_d)} \label{eq: Caccipoli 1},\\
\| v\|_{H^1(D)}&\le Cd^{-1}\|v\|_{L_2(D_d)}\label{eq: Caccipoli 2}.
\end{align}
\end{subequations}
\end{lemma}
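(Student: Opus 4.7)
\textbf{Proof proposal for Lemma \ref{lemma: Caccipoli}.} The plan is to prove the two estimates in the order \eqref{eq: Caccipoli 2} then \eqref{eq: Caccipoli 1}, since the $H^2$ bound will be obtained by applying the $H^1$ bound a second time to the gradient of $v$.

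First I would prove \eqref{eq: Caccipoli 2} by the classical cutoff-function/Cacciopoli trick. Choose a smooth cutoff $\omega \in C_0^\infty(\mathbb{R}^N)$ with $0 \le \omega \le 1$, $\omega \equiv 1$ on $D$, $\mathrm{supp}(\omega) \subset D_{d/2}$, and $\|\nabla \omega\|_{L^\infty} \le Cd^{-1}$. Since $v$ is harmonic on $D_d$ and $\omega^2 v \in H^1_0(D_d)$, testing with $\omega^2 v$ gives
\begin{equation*}
0 = \int_{D_d}\nabla v \cdot \nabla(\omega^2 v)\,\mathrm dx = \int_{D_d}\omega^2|\nabla v|^2\,\mathrm dx + 2\int_{D_d}\omega v \, \nabla v \cdot \nabla \omega\,\mathrm dx.
\end{equation*}
Applying Cauchy--Schwarz and the weighted Young inequality $2ab \le \tfrac12 a^2 + 2b^2$ to the cross term, and then absorbing $\tfrac12\int \omega^2|\nabla v|^2$ into the left-hand side, I get $\int_{D_d}\omega^2|\nabla v|^2 \le 4\int_{D_d}|\nabla\omega|^2 v^2 \le Cd^{-2}\|v\|_{L^2(D_d)}^2$. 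Since $\omega \equiv 1$ on $D$, this yields $\|\nabla v\|_{L^2(D)} \le Cd^{-1}\|v\|_{L^2(D_d)}$. Combined with the trivial bound $\|v\|_{L^2(D)}\le \|v\|_{L^2(D_d)} \le Cd^{-1}d\|v\|_{L^2(D_d)}$ (noting $d$ can be assumed bounded by the diameter of $\Omega$), this proves \eqref{eq: Caccipoli 2}.

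For \eqref{eq: Caccipoli 1}, I would iterate. Set the intermediate domain $D' = \{x \in \Omega:\mathrm{dist}(x,D)\le d/2\}$, so that $D\subset D'\subset D_d$ and $D'$ is at distance $d/2$ from $\partial D_d$. Because $v$ is harmonic on $D_d$, each partial derivative $\partial_i v$ is also harmonic on $D_d$ (at least on the open set where $v$ is smooth by interior elliptic regularity for $-\Delta v = 0$). Applying the $H^1$ Cacciopoli estimate just proved, but with $D$ replaced by $D$ and $D_d$ replaced by $D'$ and the harmonic function $\partial_i v$ in place of $v$, I obtain
\begin{equation*}
\|\nabla \partial_i v\|_{L^2(D)} \le C(d/2)^{-1}\|\partial_i v\|_{L^2(D')} \le Cd^{-1}\|v\|_{H^1(D_d)}.
\end{equation*}
Summing over $i=1,\dots,N$ yields \eqref{eq: Caccipoli 1}.

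The only mild subtlety is justifying that $\partial_i v$ is actually in $H^1_0$ of a suitable neighborhood so that the first inequality can be applied to it; but this is immediate from interior elliptic regularity applied on the slightly enlarged domain $D_d$ where $v$ is smooth away from $\partial D_d$, so no difficulty arises. The whole argument is essentially the standard interior Cacciopoli iteration; I do not expect any serious obstacle, and it is consistent with the reference to \cite[Lemma 8.3]{Schatz-Wahlbin-1978}.
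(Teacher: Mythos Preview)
Your argument for \eqref{eq: Caccipoli 2} is the standard Cacciopoli trick and is fine. The gap is in your proof of \eqref{eq: Caccipoli 1}. You want to apply the $H^1$ Cacciopoli estimate to $\partial_i v$, but that estimate, as you yourself set it up, needs the test function $\omega^2(\partial_i v)$ to lie in $H^1_0(D')$. When $D$ (and hence $D'$ and $D_d$) meets $\partial\Omega$---which is exactly the situation in which the lemma is applied later, since the annuli $D_j$ are centered at a boundary point $x_0$---the derivative $\partial_i v$ does \emph{not} vanish on $\partial\Omega$, so $\omega^2\partial_i v\notin H^1_0(D')$ and you cannot test with it. Interior elliptic regularity only tells you $v$ is smooth away from $\partial\Omega$; it gives you nothing on $\partial\Omega\cap D'$. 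So the ``mild subtlety'' you flag is in fact a genuine obstruction, not a technicality.

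The route taken in \cite[Lemma 8.3]{Schatz-Wahlbin-1978}, and the reason the paper emphasizes that the result ``holds for smooth domains and curvilinear polyhedra on which the elliptic $H^2$ regularity result holds for the Poisson equation'', is to use global $H^2$ regularity rather than differentiating. With the same cutoff $\omega$ (equal to $1$ on $D$, supported in $D_{d/2}\cap\Omega$ together with a neighbourhood of $\partial\Omega$ so that $\omega v\in H^1_0(\Omega)$), the function $\omega v\in H^1_0(\Omega)$ satisfies
\[
-\Delta(\omega v)=-2\nabla\omega\cdot\nabla v-(\Delta\omega)v\quad\text{in }\Omega,
\]
since $-\Delta v=0$ on $\operatorname{supp}\omega$. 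The $H^2$ regularity of the Poisson equation on $\Omega$ then gives
\[
|v|_{H^2(D)}\le\|\omega v\|_{H^2(\Omega)}\le C\|\Delta(\omega v)\|_{L^2(\Omega)}\le C\big(d^{-1}\|\nabla v\|_{L^2(D_d)}+d^{-2}\|v\|_{L^2(D_d)}\big)\le Cd^{-1}\|v\|_{H^1(D_d)},
\]
using $d\le\operatorname{diam}(\Omega)$ in the last step. This is the argument you should use for \eqref{eq: Caccipoli 1}; your differentiation approach only works for purely interior subdomains.
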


We also need the following interior estimate in the estimation of $v_2$. 

\begin{lemma}\label{weak-cappo}
Let $1<p,q<\infty$ be numbers such that $1/q\leq 1/n+1/p$ and assume that $h\leq h_p$, where $h_p$ is given in Lemma \ref{gradient-lp}. Let $D\subset D_d\subset \Omega$ be subdomains, with 
$
D_d=\{x\in \Omega:
dist(x, D)\le d\}.
$
If $v\in W^{1,p}_0(\Omega)\cap H^1_0(\Omega)$ satisfies equation
\begin{align}\label{eq-quasi-harmonic}
\hspace{12.5pt} (A_h\nabla v,\nabla \chi)=0 \quad\forall \chi\in H^1_0(D_d) , 
\end{align}
or 
\begin{align}\label{eq-harmonic}
    (\nabla v,\nabla \chi)=0 \quad\forall \chi\in H^1_0(D_d).
\end{align}
Then 
\begin{align}
    \|v\|_{W^{1,p}(D)}\leq \frac{C_p}{d} (\|v\|_{L^p(D_d)}+ \|v\|_{W^{1,q}(D_d)}) . 
\end{align}
\end{lemma}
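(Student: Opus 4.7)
The plan is to reduce this to a global $W^{1,p}$ estimate on $\Omega$ via a smooth cutoff, and then invoke Lemma \ref{general-W1p}. Concretely, I would fix a cutoff $\omega\in C^\infty_0(\mathbb{R}^N)$ with $\omega\equiv 1$ on $D$, $\mathrm{supp}(\omega)\subset D_{d/2}$, and $|D^k\omega|\le C d^{-k}$, so that $w:=\omega v$ belongs to $H^1_0(\Omega)$ with support inside $D_{d/2}\subset D_d$. Since $\omega\equiv 1$ on $D$, any bound on $\|w\|_{W^{1,p}(\Omega)}$ transfers to $\|v\|_{W^{1,p}(D)}$.

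The key step is to derive the weak equation satisfied by $w$. For any test function $\chi\in H^1_0(\Omega)$, the product rule gives
\begin{align*}
(A_h\nabla w,\nabla\chi)
&=(A_h\nabla v,\omega\nabla\chi)+(v A_h\nabla\omega,\nabla\chi)\\
&=(A_h\nabla v,\nabla(\omega\chi))-(A_h\nabla v,\chi\nabla\omega)+(vA_h\nabla\omega,\nabla\chi).
\end{align*}
The crucial observation is that $\omega\chi$ has support inside $\mathrm{supp}(\omega)\subset D_d$ and vanishes on $\partial D_d$, so $\omega\chi\in H^1_0(D_d)$, which by \eqref{eq-quasi-harmonic} makes $(A_h\nabla v,\nabla(\omega\chi))=0$. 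Thus $w$ satisfies
\begin{equation*}
(A_h\nabla w,\nabla\chi)=(f,\chi)+(\vec g,\nabla\chi)\quad\forall\chi\in H^1_0(\Omega),
\end{equation*}
with $f:=-A_h\nabla\omega\cdot\nabla v$ and $\vec g:=vA_h\nabla\omega$, both supported in $\mathrm{supp}(\nabla\omega)\subset D_d$.

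I would then apply Lemma \ref{general-W1p} to $w$ with the given exponents $p,q$ (noting that $v\in W^{1,p}_0\cap H^1_0$ together with the boundedness of $A_h$ and $\omega$ guarantees $\vec g\in L^p(\Omega)\cap L^2(\Omega)$ and $f\in L^q(\Omega)\cap L^2(\Omega)$ whenever the right-hand side of the target inequality is finite). This yields
\begin{align*}
\|v\|_{W^{1,p}(D)}\le\|w\|_{W^{1,p}(\Omega)}
&\le C_p\|f\|_{L^q(\Omega)}+C_p\|\vec g\|_{L^p(\Omega)}\\
&\le C_p\|\nabla\omega\|_{L^\infty}\bigl(\|\nabla v\|_{L^q(D_d)}+\|v\|_{L^p(D_d)}\bigr)\\
&\le \tfrac{C_p}{d}\bigl(\|v\|_{W^{1,q}(D_d)}+\|v\|_{L^p(D_d)}\bigr),
\end{align*}
which is exactly the claimed bound. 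The case \eqref{eq-harmonic} is handled identically: the same computation goes through with $A_h$ replaced by $I$, and instead of Lemma \ref{general-W1p} one uses the standard $W^{1,p}$ regularity for the Poisson equation on the curvilinear polyhedron quoted from \cite{1992-Dauge}.

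The only real subtlety is verifying the hypotheses of Lemma \ref{general-W1p}: the relation $\tfrac{1}{q}\le\tfrac{1}{n}+\tfrac{1}{p}$ is precisely what allows $f\in L^q$ (appearing through $\nabla v\in L^q(D_d)$) to be treated as a source in the right-hand side, while the product $vA_h\nabla\omega$ must be handled as a divergence-form datum in $L^p$. Once this is set up, the estimate is immediate, and the factor $d^{-1}$ comes from the bound on $\nabla\omega$.
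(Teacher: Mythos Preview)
Your proof is correct and follows essentially the same approach as the paper: both introduce a smooth cutoff $\omega$ equal to $1$ on $D$ with support (intersected with $\Omega$) in $D_d$, derive the weak equation satisfied by $\omega v$ via the product rule and the local harmonicity \eqref{eq-quasi-harmonic}, and then apply Lemma~\ref{general-W1p} to obtain the bound with the factor $d^{-1}$ coming from $\|\nabla\omega\|_{L^\infty}$. The paper likewise remarks that the case \eqref{eq-harmonic} is handled by the identical argument.
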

\begin{proof}
We focus on the first case: $v$ satisfies equation \eqref{eq-quasi-harmonic}. The proof for the second case is the same and therefore omitted. 

First, we choose a cut-off function $\omega\in C^\infty_0(\mathbb{R}^N)$, $\omega\equiv 1$ on $D$, ${\rm supp}(\omega)\cap \Omega\subset D_d$, with $\|\omega\|_{W^{1,\infty}(\R^N)}\leq Cd^{-1}$. Then $\omega v\in H^1_0(\Omega)$ satisfies the following equation: 
\begin{align*}
    (A_h\nabla (\omega v),\nabla \chi)&= (\omega A_h\nabla v,\nabla\chi)+(A_h\nabla\omega, v\nabla\chi)\\
    &=(A_h\nabla v, \nabla (\omega\chi))- (A_h\nabla v, \chi\nabla\omega)+(A_h\nabla\omega,v\nabla\chi)\\
    &=(A_hv\nabla\omega,\nabla \chi)-(A_h\nabla v\cdot \nabla\omega, \chi)\quad \forall\chi\in H^1_0(\Omega)
\end{align*}
where we have used the identity $(A_h\nabla v, \nabla (\omega\chi))=0$ in the derivation of the last equality, which is a consequence of \eqref{eq-quasi-harmonic} and $\omega\chi\in H^1_0(D_d)$. Then we can apply Lemma \ref{general-W1p} to the above equation satisfied by $\omega v$. This yields the following result:
\begin{align*}
 \|\omega v\|_{W^{1,p}(\Omega)}&\leq C_p\|A_h v\nabla\omega\|_{L^p(\Omega)}+C_p\|A_h\nabla v\cdot\nabla \omega\|_{L^q(\Omega)}\\
    &\leq \frac{C_p}{d}\|v\|_{L^p(D_d)}+\frac{C_p}{d}\|v\|_{W^{1,q}(D_d)}.
\end{align*}
Since $\omega=1$ on $D$, the last inequality implies the result of Lemma \ref{weak-cappo}.
\end{proof}

\begin{lemma}\label{v2-weak-cappo}
Let $1<p,q<\infty$ be numbers such that $1/q\leq 1/n+1/p$ and assume that $h\leq \min \{h_p,h_q\}$, where $h_p,h_q$ are given in Lemma \ref{gradient-lp}. Let $D\subset D_d\subset \Omega$ be subdomains, with $D_d=\{x\in \Omega:dist(x, D)\le d\}$. 
If the source function $f$ has ${\rm supp}(f)\cap D_d=\emptyset$, then the solution $v_2$ of equation \eqref{weak-pde-v2} satisfies the following estimate: 
\begin{align}
    \|v_2\|_{W^{1,p}(D)}\leq \frac{C_{p,q}}{d}h\|v_1\|_{W^{1,q}(\Omega)} . 
\end{align}
\end{lemma}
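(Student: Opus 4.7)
The plan is to localize $v_2$ via a smooth cutoff and reduce the estimate to the global elliptic regularity result in Lemma \ref{general-W1p} combined with the global $W^{1,q}$ bound from Lemma \ref{gradient-lp}. The factor $h$ will come from $\|I-A_h\|_{L^\infty(\Omega)}\le Ch$ (since $r\ge 1$), and the factor $1/d$ will come from differentiating a cutoff supported in a $d/2$-neighborhood of $D$.

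First I would choose $\omega\in C_0^\infty(\mathbb{R}^N)$ with $\omega\equiv 1$ on $D$, $\mathrm{supp}(\omega)\cap\Omega\subset D_{d/2}$, and $\|\nabla\omega\|_{L^\infty(\mathbb{R}^N)}\le C/d$. Testing equation \eqref{weak-pde-v2} with $\omega\chi\in H^1_0(\Omega)$ and expanding $A_h\nabla(\omega v_2)=\omega A_h\nabla v_2+v_2A_h\nabla\omega$, a direct manipulation (completely analogous to the proof of Lemma \ref{weak-cappo}) yields
\begin{align*}
(A_h\nabla(\omega v_2),\nabla\chi)=(\vec{G},\nabla\chi)+(F,\chi)\quad\forall\,\chi\in H^1_0(\Omega),
\end{align*}
with $\vec{G}=\omega(I-A_h)\nabla v_1+v_2A_h\nabla\omega$ and $F=\bigl((I-A_h)\nabla v_1-A_h\nabla v_2\bigr)\cdot\nabla\omega$. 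Lemma \ref{general-W1p} then gives $\|\omega v_2\|_{W^{1,p}(\Omega)}\le C_p\|\vec{G}\|_{L^p(\Omega)}+C_p\|F\|_{L^q(\Omega)}$, and since $\omega\equiv 1$ on $D$ it suffices to bound each of these source norms by $(C_{p,q}h/d)\|v_1\|_{W^{1,q}(\Omega)}$.

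The crucial step is the estimation of $\|\omega(I-A_h)\nabla v_1\|_{L^p(\Omega)}$, since a priori $\nabla v_1$ is only known globally in $L^q$. This is precisely where the hypothesis $\mathrm{supp}(f)\cap D_d=\emptyset$ enters: it forces $v_1$ to satisfy \eqref{eq-harmonic} on $D_d$, so Lemma \ref{weak-cappo} applied to $v_1$ on the nested pair $D_{d/2}\subset D_d$ upgrades the local regularity to
\begin{align*}
\|v_1\|_{W^{1,p}(D_{d/2})}\le \frac{C_p}{d}\bigl(\|v_1\|_{L^p(D_d)}+\|v_1\|_{W^{1,q}(D_d)}\bigr)\le \frac{C_{p,q}}{d}\|v_1\|_{W^{1,q}(\Omega)},
\end{align*}
where the Sobolev embedding $W^{1,q}(\Omega)\hookrightarrow L^p(\Omega)$ under the hypothesis $1/q\le 1/N+1/p$ absorbs the $L^p$ term. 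Combined with $\|I-A_h\|_{L^\infty}\le Ch$, this controls the leading $\vec{G}$-term by $(C_{p,q}h/d)\|v_1\|_{W^{1,q}(\Omega)}$. All the remaining terms in $\vec{G}$ and $F$ carry a factor $\nabla\omega$ (contributing $1/d$) multiplied by either $v_2$ or $\nabla v_2$; these are handled by applying Lemma \ref{gradient-lp} to \eqref{weak-pde-v2} globally to get $\|v_2\|_{W^{1,q}(\Omega)}\le C_qh\|\nabla v_1\|_{L^q(\Omega)}$, with Sobolev embedding providing $\|v_2\|_{L^p(\Omega)}\le Ch\|\nabla v_1\|_{L^q(\Omega)}$ for the remaining $v_2$ contribution.

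The principal obstacle is exactly the point just identified: the source of $v_2$'s equation contains $\nabla v_1$, which globally sits only in $L^q$, so a purely global $W^{1,p}$ estimate on $v_2$ is unavailable. The resolution exploits the harmonicity of $v_1$ on $D_d$ afforded by the support condition on $f$ to promote $\nabla v_1$ to $L^p(D_{d/2})$ where the cutoff lives. A minor bookkeeping matter is verifying that $F\in L^q(\Omega)\cap L^2(\Omega)$ and $\vec{G}\in L^p(\Omega)\cap L^2(\Omega)$ as required by Lemma \ref{general-W1p}; this is automatic because $v_2\in H^1_0(\Omega)\cap W^{1,q}(\Omega)$ by Lemma \ref{gradient-lp}, $\omega$ has compact support in $\Omega$, and the Caccioppoli-type estimate above places $\nabla v_1\in L^p(D_{d/2})$.
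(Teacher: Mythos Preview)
Your proposal is correct and follows essentially the same route as the paper: localize $v_2$ with a cutoff supported in $D_{d/2}$, apply Lemma~\ref{general-W1p} to the resulting equation for $\omega v_2$, use the support condition on $f$ together with Lemma~\ref{weak-cappo} to upgrade $\nabla v_1$ to $L^p(D_{d/2})$, and control the $v_2$-terms via Lemma~\ref{gradient-lp} and the Sobolev embedding $W^{1,q}(\Omega)\hookrightarrow L^p(\Omega)$. Your grouping of the source into $\vec{G}$ and $F$ is exactly the form needed for Lemma~\ref{general-W1p}, and your identification of the ``principal obstacle'' matches the paper's key step.
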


\begin{proof}
We consider a cut-off function $\omega$ such that $\omega\equiv 1$ in $D$ and ${\rm supp}(\omega)\subset D_{d/2}$, with $\|\omega\|_{W^{1,\infty}(\R^N)}\leq Cd^{-1}$. Then the following equation can be written down similarly as in the proof of Lemma \ref{weak-cappo}: 
\begin{align*}
    (A_h\nabla (\omega v_2),\nabla \chi)&= (\omega (I-A_h)\nabla v_1,\nabla\chi)+((I-A_h)\nabla v_1\cdot\nabla\omega, \chi)\\
    & \quad +(v_2 A_h\nabla \omega, \nabla \chi)- (A_h\nabla v_2\cdot \nabla\omega, \chi)\quad \forall\chi\in H^1_0(\Omega).
\end{align*}
By applying Lemma \ref{general-W1p} to the equation above, we obtain
\begin{align}\label{v2-W1p}
    \|v_2\|_{W^{1,p}(D)}
    &\leq 
    C_ph\|v_1\|_{W^{1,p}(D_{d/2})}
    + \frac{C_ph}{d}\|v_1\|_{W^{1,q}(\Omega)}
    +\frac{C_p}{d}\|v_2\|_{L^{p}(\Omega)} +\frac{C_p}{d}\|v_2\|_{W^{1,q}(\Omega)} \notag\\
    &\leq 
    C_ph\|v_1\|_{W^{1,p}(D_{d/2})}
    + \frac{C_ph}{d}\|v_1\|_{W^{1,q}(\Omega)}
    +\frac{C_p}{d}\|v_2\|_{W^{1,q}(\Omega)},
\end{align}
where we have used Sobolev embedding $W^{1,q}(\Omega)\hookrightarrow L^p(\Omega)$. 

Since ${\rm supp}(f)\cap D_d=\emptyset$, it follows that the solution $v_1$ of \eqref{weak-pde-v1} satisfies equation \eqref{eq-harmonic}. Therefore, Lemma \ref{weak-cappo} implies that  
\begin{align*}
    \|v_1\|_{W^{1,p}(D_{d/2})}
    \leq \frac{C_p}{d} (\|v\|_{L^p(D_d)}+ \|v\|_{W^{1,q}(D_d)}) 
    \leq \frac{C_p}{d}\|v_1\|_{W^{1,q}(\Omega)} . 
\end{align*}
By applying Lemma \ref{gradient-lp} to equation \eqref{weak-pde-v2}, we also obtain
\begin{align*}
    \|v_2\|_{W^{1,q}(\Omega)}
    \leq C_{q}\|I-A_h\|_{L^\infty(\Omega)} \|v_1\|_{W^{1,q}(\Omega)} 
    \leq C_{q}h\|v_1\|_{W^{1,q}(\Omega)}. 
\end{align*}
Then, substituting the last two inequalities into \eqref{v2-W1p}, we obtain the result of Lemma \ref{v2-weak-cappo}. 
\end{proof}

\subsection{$W^{1,p}$ stablity of the Ritz projection (with discontinuous coefficients)}
\label{section:W1p}

In \cite{Guzman_Leykekhman_Rossman_Schatz_2009} the $W^{1,\infty}$ stability of the Ritz projection is proved for the Poisson equation in convex polyhedral domains. The proof is based on the following properties of the domain and finite elements:
\begin{enumerate}

\item[(P1)] H\"older estimates of the Green function for the Poisson equation, i.e.,
\begin{align}\label{Holder-Green}
\begin{aligned}
&\frac{\left|\partial_{x_{i}} G(x, \xi)-\partial_{y_{i}} G(y, \xi)\right|}{|x-y|^{\sigma}} 
\leq C\left(|x-\xi|^{-2-\sigma}+|y-\xi|^{-2-\sigma}\right) \\
&\frac{\left|\partial_{x_{i}} \partial_{\xi_{j}} G(x, \xi)-\partial_{y_{i}} \partial_{\xi_{j}} G(y, \xi)\right|}{|x-y|^{\sigma}} 
\leq C\left(|x-\xi|^{-3-\sigma}+|y-\xi|^{-3-\sigma}\right)
\end{aligned}
\end{align}
for $i, j=1,2,3$. 

\item[(P2)] 
Elliptic $H^2$ regularity result for the Poisson equation. 

\item[(P3)] 
Exact triangulation which matches the boundary $\partial\Omega$. 

\item[(P4)] 
Error estimates for the Lagrange interpolation holds as in Lemma \ref{Proposition-A0}. 

\end{enumerate}

Note that the H\"older estimates for the Green function in \eqref{Holder-Green} was proved in \cite{Guzman_Leykekhman_Rossman_Schatz_2009} for general curvilinear polyhedral domains with edge opening smaller than $\pi$, instead of merely classical polyhedral domains. 
If we define a modified Ritz projection $R^*_h$ associated to the Poisson equation (without the discontinuous coefficient $A_h$), i.e., 
\begin{align}\label{Ritz-Poisson}
\int_{\Omega}\nabla (v-R^*_h v)\cdot \nabla\check{\chi}_h \d x 
=0 \quad \forall \check{\chi}_h\in \mathring S_h(\Omega) ,
\end{align}
then all the properties in (P1)--(P4) are possessed by the curvilinear polyhedral domain $\Omega$ and the finite element space $\mathring S_h(\Omega)$. The latter is based on the triangulation $\check{K}$ which matches the boundary $\partial\Omega$ exactly. Therefore, the $W^{1,\infty}$ stability still holds for the modified Ritz projection defined in \eqref{Ritz-Poisson}. The result is stated in the following lemma. 

\begin{lemma}\label{w1,inf-stable}
\begin{align}
    \|R^*_h v\|_{W^{1,\infty}(\Omega)}\leq C\|v\|_{W^{1,\infty}(\Omega)}
    \quad\forall\, v\in H^1_0(\Omega)\cap W^{1,\infty}(\Omega) .
\end{align}
\end{lemma}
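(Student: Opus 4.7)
The plan is to transplant the Green's function and duality argument of Guzm\'an--Leykekhman--Rossmann--Schatz in \cite{Guzman_Leykekhman_Rossman_Schatz_2009} directly to our setting. That paper established the $W^{1,\infty}$ stability of the Ritz projection for the Poisson equation on convex polyhedra using only the four ingredients (P1)--(P4), and each of these ingredients is available on our curvilinear polyhedron with edge openings less than $\pi$: (P1) is precisely the H\"older-type Green's function bound they proved, which was established on exactly this class of domains; (P2) is guaranteed by \cite{1992-Dauge}; (P3) is automatic because the curved triangulation $\check{\mathscr{K}}$ matches $\partial\Omega$ exactly, so $\mathring S_h(\Omega) \subset H^1_0(\Omega)$ is a conforming space and the standard Galerkin orthogonality of $R^*_h$ with respect to the $H^1$-inner product is valid; and (P4) is Lemma \ref{Proposition-A0}. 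I therefore only outline the structure of the proof.

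First, I would reduce the task to pointwise control of $\nabla R^*_h v$. Fix an arbitrary $x_0 \in \Omega$ and a unit direction $\vec e$, and introduce a smooth mollified Dirac mass $\delta^h_{x_0}$ of unit integral supported in a ball of radius $O(h)$ around $x_0$, chosen so that $\partial_{\vec e} w(x_0) = (\partial_{\vec e}\delta^h_{x_0}, w)$ holds up to a negligible super-approximation error whenever $w \in \mathring S_h(\Omega)$. Let $g \in H^1_0(\Omega)$ solve $-\Delta g = \partial_{\vec e}\delta^h_{x_0}$ weakly. Using the Galerkin orthogonality of $R^*_h$ defined in \eqref{Ritz-Poisson},
\[
(\partial_{\vec e}\delta^h_{x_0},\,R^*_h v) = (\nabla R^*_h v,\,\nabla g) = (\nabla v,\,\nabla R^*_h g),
\]
and hence $|\partial_{\vec e}R^*_h v(x_0)| \leq \|\nabla v\|_{L^\infty(\Omega)}\,\|\nabla R^*_h g\|_{L^1(\Omega)} + (\text{lower-order terms})$. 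Everything then reduces to the weighted $L^1$ bound $\|\nabla R^*_h g\|_{L^1(\Omega)} \leq C$ uniformly in $h$ and $x_0$.

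Second, to prove this weighted bound I would use a dyadic decomposition $\Omega = \Omega_* \cup \bigcup_{j\ge 0} A_j$ of concentric shells around $x_0$, with $A_j = \{x : 2^{-j-1}\mathrm{diam}(\Omega) \leq |x-x_0| \leq 2^{-j}\mathrm{diam}(\Omega)\}$, truncated once the shell thickness reaches the mesh scale, and $\Omega_*$ the innermost ball of radius $O(h)$. On each $A_j$, a local energy estimate for $R^*_h$ on a slight enlargement of $A_j$, combined with the super-approximation result of Lemma \ref{a2} and the interpolation bounds of Lemma \ref{Proposition-A0}, controls $\|\nabla R^*_h g\|_{L^2(A_j)}$ by weighted $L^2$-norms of $g - \check{I}_h g$ and of $\nabla g$ on a slight enlargement of $A_j$. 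These are in turn estimated by the pointwise Green's function decay $|\nabla_x G(x,x_0)| \leq C|x-x_0|^{-(N-1)}$ together with the H\"older-type increments supplied by (P1), while the innermost ball $\Omega_*$ is absorbed by Cauchy--Schwarz and the global $H^2$-regularity (P2) of the mollified Green's function. Summing the shell estimates produces the desired $O(1)$ bound on $\|\nabla R^*_h g\|_{L^1(\Omega)}$.

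The main obstacle is the scale-by-scale bookkeeping needed to close the dyadic sum: one must verify that the H\"older exponent produced by (P1), together with the $O(h)$ factor from super-approximation and the geometric decay of the annular widths, yields a summable series in $j$. This is exactly the technical heart of \cite{Guzman_Leykekhman_Rossman_Schatz_2009}; once one observes that the Green's function estimates stated there as (P1) hold on curvilinear polyhedra with edge openings less than $\pi$, their argument carries through unchanged and delivers the claimed stability.
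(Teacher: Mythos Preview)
Your proposal is correct and follows essentially the same approach as the paper: the paper does not give a self-contained proof of this lemma but simply observes that the four ingredients (P1)--(P4) needed in \cite{Guzman_Leykekhman_Rossman_Schatz_2009} are all available for the curvilinear polyhedron $\Omega$ and the conforming space $\mathring S_h(\Omega)$ on the exact triangulation $\check{\mathscr K}$, so the argument there carries over verbatim. Your outline of the regularized Green's function duality and dyadic shell decomposition is in fact more detailed than what the paper provides.
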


By real interpolation between the $H^1$ and $W^{1,\infty}$ stability estimates (see \cite[result in (5.1)]{Milman}), we obtain the $W^{1,p}$ stability of the modified Ritz projection for $2\leq p\leq \infty$. The result can also be extended to $1<p\leq 2$ by a duality argument as in \cite[Section 8.5]{Brenner_Scott}, which requires Poisson equation to have the $W^{1,p'}$ regularity (this is true for a curvilinear polyhedron with edge opening smaller than $\pi$). The result is summarized below. 

\begin{lemma}[$W^{1,p}$ stability of the modified Ritz projection $R_h^*$]
For any $1<p\le\infty$, there exists a positive constant $h_p$ such that for $h\le h_p$ the following result holds:
\label{Lemma:W1p_Rh*}
\begin{align}
    \|R^*_h u\|_{W^{1,p}(\Omega)}\leq C_p\|u\|_{W^{1,p}(\Omega)}\quad\forall u\in W^{1,p}(\Omega)\cap H^1_0(\Omega)  . 
\end{align}
\end{lemma}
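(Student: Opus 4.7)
My plan is to follow exactly the strategy sketched in the paragraph preceding the statement: combine the endpoint $W^{1,\infty}$ estimate of Lemma \ref{w1,inf-stable} with the energy estimate at $p=2$, interpolate to cover $2\le p\le\infty$, and then dualize to cover $1<p<2$.

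\textbf{Step 1: endpoint estimates.} I have $W^{1,\infty}$ stability from Lemma \ref{w1,inf-stable}, i.e.\ $R_h^*$ is bounded as an operator from $W^{1,\infty}(\Omega)\cap H^1_0(\Omega)$ into itself with constant independent of $h$. At the other endpoint, the definition \eqref{Ritz-Poisson} together with Galerkin orthogonality and Cauchy--Schwarz gives the uniform $H^1$-bound $\|\nabla R_h^* u\|_{L^2(\Omega)}\le \|\nabla u\|_{L^2(\Omega)}$, which is the $p=2$ case.

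\textbf{Step 2: interpolation for $2\le p\le\infty$.} View $R_h^*$ as a linear operator on the scale $W^{1,p}_0(\Omega)$ (or more precisely on $W^{1,p}(\Omega)\cap H^1_0(\Omega)$ with the $W^{1,p}$ norm). By the Riesz--Thorin / real interpolation result cited in the paragraph (see \cite[(5.1)]{Milman}), since $R_h^*$ is uniformly bounded at the two endpoints $p=2$ and $p=\infty$, it is uniformly bounded at every intermediate $p\in[2,\infty]$. This produces the estimate $\|R_h^* u\|_{W^{1,p}(\Omega)}\le C_p\|u\|_{W^{1,p}(\Omega)}$ for $2\le p\le\infty$.

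\textbf{Step 3: duality for $1<p<2$.} Fix $1<p<2$ and set $p'=p/(p-1)>2$, so Step~2 already delivers the uniform bound for $R_h^*$ in $W^{1,p'}$. For $u\in W^{1,p}(\Omega)\cap H^1_0(\Omega)$, represent $\|\nabla R_h^* u\|_{L^p(\Omega)}$ by testing against an arbitrary $\vec g\in L^{p'}(\Omega)^N$ with $\|\vec g\|_{L^{p'}}\le 1$, and let $w\in H^1_0(\Omega)$ solve $(\nabla w,\nabla\chi)=(\vec g,\nabla\chi)$ for all $\chi\in H^1_0(\Omega)$. By the $W^{1,p'}$ regularity of the Poisson equation on the curvilinear polyhedron (\cite[Corollaries 3.7, 3.9, 3.12]{1992-Dauge}, applicable since edge openings are smaller than $\pi$), $w\in W^{1,p'}(\Omega)$ with $\|w\|_{W^{1,p'}(\Omega)}\le C_{p'}\|\vec g\|_{L^{p'}(\Omega)}$. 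Using Galerkin orthogonality of the Poisson-type Ritz projection and the $W^{1,p'}$ bound from Step~2 applied to $R_h^* w$,
\begin{align*}
(\nabla R_h^* u,\vec g)
&=(\nabla R_h^* u,\nabla w)
 =(\nabla R_h^* u,\nabla R_h^* w)
 =(\nabla u,\nabla R_h^* w)\\
&\le \|\nabla u\|_{L^p(\Omega)}\|\nabla R_h^* w\|_{L^{p'}(\Omega)}
 \le C_{p'}\|\nabla u\|_{L^p(\Omega)}\|\vec g\|_{L^{p'}(\Omega)}.
\end{align*}
Taking the supremum over $\vec g$ yields $\|\nabla R_h^* u\|_{L^p(\Omega)}\le C_p\|\nabla u\|_{L^p(\Omega)}$. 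The full $W^{1,p}$ bound then follows by Poincar\'e's inequality since $R_h^* u\in H^1_0(\Omega)$.

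\textbf{Main obstacle.} The proof itself is structural, so no single step is genuinely hard once the ingredients are in place. The most delicate point is Step~3, where one must verify that the Poisson $W^{1,p'}$ regularity of Dauge applies at the relevant range of $p'$; this is exactly where the hypothesis that $\Omega$ is smooth or a curvilinear polyhedron with edge openings smaller than $\pi$ is used, and it determines (together with Lemma \ref{w1,inf-stable}) the threshold $h_p$ in the statement.
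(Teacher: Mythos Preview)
Your proposal is correct and follows exactly the route sketched in the paper: endpoint stability at $p=2$ (trivial) and $p=\infty$ (Lemma \ref{w1,inf-stable}), real interpolation \`a la \cite{Milman} to obtain $2\le p\le\infty$, and then the Brenner--Scott duality argument using the $W^{1,p'}$ regularity of the Poisson problem from \cite{1992-Dauge} for $1<p<2$. Your Step~3 is in fact a more explicit write-up of what the paper only cites as ``a duality argument as in \cite[Section 8.5]{Brenner_Scott}''; the chain $(\nabla R_h^*u,\vec g)=(\nabla R_h^*u,\nabla w)=(\nabla R_h^*u,\nabla R_h^*w)=(\nabla u,\nabla R_h^*w)$ is correct and uses Galerkin orthogonality for $R_h^*$ twice.
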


By a ``perturbation" argument, similar as \cite[Section 8.6]{Brenner_Scott}, one can obtain the $W^{1,p}$ stability of the Ritz projection $R_h$. This is stated in the following proposition.

\begin{proposition}[$W^{1,p}$ stability of the Ritz projection $R_h$]\label{w1,p-stable}
For any $1<p<\infty$, there exists a positive constant $h_p$ such that for $h\le h_p$ the following result holds:
\begin{align}
    \|R_h u\|_{W^{1,p}(\Omega)}\leq C_p\|u\|_{W^{1,p}(\Omega)}\quad\forall u\in W^{1.p}(\Omega)\cap H^1_0(\Omega).  
\end{align}

\end{proposition}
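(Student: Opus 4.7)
The plan is to treat $R_h$ as a small perturbation of the standard Ritz projection $R_h^*$ from Lemma \ref{Lemma:W1p_Rh*} and show that the correction $w_h := R_h u - R_h^* u \in \mathring S_h(\Omega)$ is of order $h$ in $W^{1,p}$. Subtracting the defining identity of $R_h^*$ from that of $R_h$ and using the symmetry of $A_h$, one derives the discrete equation
\begin{equation*}
(A_h \nabla w_h, \nabla \check\chi_h) = ((A_h - I)(\nabla u - \nabla R_h^* u), \nabla \check\chi_h)
\quad \forall\,\check\chi_h \in \mathring S_h(\Omega).
\end{equation*}
By \eqref{eq:perturb} and Lemma \ref{Lemma:W1p_Rh*}, the $L^p$-norm of the ``source field'' $(A_h - I)(\nabla u - \nabla R_h^* u)$ on the right is already of order $h\|u\|_{W^{1,p}(\Omega)}$, so the whole issue is to promote this to a $W^{1,p}$ bound on the discrete function $w_h$.

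The next step is a duality argument. Given $\vec\phi \in C_0^\infty(\Omega)^N$ with $\|\vec\phi\|_{L^{p'}(\Omega)} \le 1$, where $1/p + 1/p' = 1$, let $\psi \in H^1_0(\Omega)$ solve $(\nabla \psi, \nabla \chi) = (\vec\phi, \nabla \chi)$ for all $\chi \in H^1_0(\Omega)$; the $W^{1,p'}$ regularity of the Poisson equation on curvilinear polyhedra with edge openings less than $\pi$ (\cite[Corollaries 3.7, 3.9, 3.12]{1992-Dauge}) gives $\|\psi\|_{W^{1,p'}(\Omega)} \le C_{p'}$. Since $w_h \in \mathring S_h(\Omega)$, Galerkin orthogonality of $R_h^*$ gives $(\nabla\psi,\nabla w_h) = (\nabla R_h^*\psi,\nabla w_h)$, and then, using the symmetry of $A_h$ and the discrete equation for $w_h$,
\begin{align*}
(\vec\phi,\nabla w_h)
&= (A_h\nabla R_h^*\psi,\nabla w_h) - ((A_h-I)\nabla R_h^*\psi,\nabla w_h) \\
&= ((A_h-I)(\nabla u - \nabla R_h^* u), \nabla R_h^*\psi) - ((A_h-I)\nabla R_h^*\psi, \nabla w_h).
\end{align*}
Term-by-term H\"older estimates, together with $\|A_h-I\|_{L^\infty(\Omega)}\le Ch$ and the $W^{1,p'}$ stability of $R_h^*$ applied to $\psi$ (Lemma \ref{Lemma:W1p_Rh*} with exponent $p'$), bound the right-hand side by $C_p h\bigl(\|u\|_{W^{1,p}(\Omega)} + \|w_h\|_{W^{1,p}(\Omega)}\bigr) \|\vec\phi\|_{L^{p'}(\Omega)}$.

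Taking the supremum over $\vec\phi$ and invoking Poincar\'e's inequality (since $w_h$ vanishes on $\partial\Omega$) yields $\|w_h\|_{W^{1,p}(\Omega)} \le C_p h \|u\|_{W^{1,p}(\Omega)} + C_p h \|w_h\|_{W^{1,p}(\Omega)}$. For $h \le h_p$ small enough, the last term is absorbed into the left-hand side, giving $\|w_h\|_{W^{1,p}(\Omega)} \le C_p h \|u\|_{W^{1,p}(\Omega)}$, after which the triangle inequality combined with Lemma \ref{Lemma:W1p_Rh*} delivers the claimed bound $\|R_h u\|_{W^{1,p}(\Omega)} \le C_p\|u\|_{W^{1,p}(\Omega)}$. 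The main obstacle I anticipate is the need for $W^{1,p'}$ regularity of the dual Poisson problem for the full range $1 < p < \infty$ on a possibly concave curvilinear polyhedron; this is precisely where the edge-opening hypothesis is used, via the Dauge regularity results, and the argument also relies on the symmetric exploitation of both the $W^{1,p}$ and $W^{1,p'}$ stability of $R_h^*$ to close the loop.
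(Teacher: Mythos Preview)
Your argument is correct, but it differs from the paper's in how the perturbation is handled. The paper introduces an auxiliary \emph{continuous} function $w\in H^1_0(\Omega)$ solving
\[
(\nabla w,\nabla\chi)=-\bigl((I-A_h)\nabla(u-R_hu),\nabla\chi\bigr)\quad\forall\,\chi\in H^1_0(\Omega),
\]
observes the clean identity $R_hu=R_h^*(w+u)$, and then applies Lemma~\ref{Lemma:W1p_Rh*} once, together with the $W^{1,p}$ regularity of the Poisson equation to bound $\|w\|_{W^{1,p}}$; this yields $\|R_hu\|_{W^{1,p}}\le C_ph\|R_hu\|_{W^{1,p}}+C_p\|u\|_{W^{1,p}}$ directly, without duality. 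Your route instead works with the discrete object $w_h=R_hu-R_h^*u$ and estimates $\|\nabla w_h\|_{L^p}$ via a dual Poisson problem for $\psi$, which forces you to invoke Lemma~\ref{Lemma:W1p_Rh*} at \emph{both} exponents $p$ and $p'$ (and hence to take $h\le\min(h_p,h_{p'})$). Both arguments rest on the same ingredients---$\|A_h-I\|_{L^\infty}\le Ch$, the Dauge $W^{1,p}$ regularity on curvilinear polyhedra with edge openings $<\pi$, and the stability of $R_h^*$---so the substance is equivalent; the paper's version is simply shorter because the identity $R_hu=R_h^*(w+u)$ bypasses the duality step entirely.
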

\begin{proof}
For $v\in H^1_0(\Omega)$, its Ritz projection $R_hv\in \mathring S_h(\Omega) $ satisfies the following equation: 
\begin{align*}
\int_{\Omega}\nabla (v-R_h v)\cdot \nabla\check{\chi}_h \d x 
= \int_{\Omega} (I-A_h)\nabla (v-R_h v)\cdot \nabla\check{\chi}_h \d x  \quad \forall \check{\chi}_h\in \mathring S_h(\Omega) .
\end{align*}
If we define $w$ to be the solution of the following elliptic equation (in the weak form):
\begin{align*}
\int_{\Omega}\nabla w\cdot \nabla\check{\chi} \d x 
= -\int_{\Omega} (I-A_h)\nabla (v-R_h v)\cdot \nabla\check{\chi} \d x  \quad \forall \check{\chi}\in H^1_0(\Omega) ,
\end{align*}
then 
\begin{align*}
\int_{\Omega}\nabla (w+v-R_h v)\cdot \nabla\check{\chi}_h \d x 
= 0  \quad \forall \check{\chi}_h\in \mathring S_h(\Omega) ,
\end{align*}
which means that $R_hv=R_h^*(w+v)$. Lemma \ref{Lemma:W1p_Rh*} implies that 
\begin{align*}
    \|R_hv\|_{W^{1,p}(\Omega)}
    =\|R_h^*(w+v)\|_{W^{1,p}(\Omega)}
    &\le C_p\|w+v\|_{W^{1,p}(\Omega)}\\
    &\le C_p\|I-A_h\|_{L^\infty(\Omega)}\|v-R_h v\|_{W^{1,p}(\Omega)}+C_p\|v\|_{W^{1,p}(\Omega)}\\
    &\le C_ph\|R_h v\|_{W^{1,p}(\Omega)}+C_p\|v\|_{W^{1,p}(\Omega)} .
\end{align*}
There exists a constant $h_p$ such that for $h\le h_p$ the first term on the right-hand side can be absorbed by the left-hand side. In this case we obtain the result of Proposition \ref{w1,p-stable}.
\end{proof}

As a result of Proposition \ref{w1,p-stable}, we obtain the following $W^{1,p}$ error estimate for the Ritz projection.

\begin{lemma}\label{Lemma:Ritz-error-W1p}
For any $1<q<2+\varepsilon$, there exists a positive constant $h_q$ such that for $h\le h_q$ the solution of  equation \eqref{weak-PDE} has the following error bound{\rm:} 
\begin{align*}
\|v- R_hv\|_{W^{1,q}(\Omega)}
\le
C_q h \|f\|_{L^q(\Omega)}
\quad\forall\, f\in L^q(\Omega)\cap L^2(\Omega). 
\end{align*}
\end{lemma}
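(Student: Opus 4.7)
The plan is to combine the regularity decomposition $v=v_1+v_2$ introduced in \eqref{v-decomp}--\eqref{weak-pde-v2} with the $W^{1,q}$ stability of $R_h$ established in Proposition \ref{w1,p-stable}, writing the error as $v-R_hv=(v_1-R_hv_1)+(v_2-R_hv_2)$ and estimating each part by a different mechanism. The smooth part $v_1$ carries the regularity ($W^{2,q}$) and will be handled by a quasi-interpolation plus stability argument; the rough part $v_2$, which only has $W^{1,q}$ regularity, is already of size $O(h)$ by construction and will be handled by bare stability.

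For the smooth part, I would first invoke \eqref{eq:r1} to get $\|v_1\|_{W^{2,q}(\Omega)}\le C_q\|f\|_{L^q(\Omega)}$ (which is the reason for the restriction $q<2+\varepsilon$). I would then take the Scott--Zhang interpolant on the flat triangulation provided by Lemma \ref{Scott-Zhang} and transfer it to $\Omega$ via the map $\Phi_h$, producing a boundary-preserving quasi-interpolant $\check{\mathcal{I}}^h v_1\in\mathring S_h(\Omega)$ satisfying
\[
\|v_1-\check{\mathcal{I}}^h v_1\|_{W^{1,q}(\Omega)}\le C_q h\,\|v_1\|_{W^{2,q}(\Omega)}\le C_q h\,\|f\|_{L^q(\Omega)}.
\]
Since $\check{\mathcal{I}}^h v_1\in \mathring S_h(\Omega)$, the identity $R_h\check{\mathcal{I}}^h v_1=\check{\mathcal{I}}^h v_1$ gives $R_hv_1-\check{\mathcal{I}}^h v_1=R_h(v_1-\check{\mathcal{I}}^h v_1)$, and Proposition \ref{w1,p-stable} yields $\|R_h(v_1-\check{\mathcal{I}}^h v_1)\|_{W^{1,q}(\Omega)}\le C_q\|v_1-\check{\mathcal{I}}^h v_1\|_{W^{1,q}(\Omega)}$. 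The triangle inequality then gives $\|v_1-R_hv_1\|_{W^{1,q}(\Omega)}\le C_q h\,\|f\|_{L^q(\Omega)}$.

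For the rough part, I would apply Lemma \ref{gradient-lp} to \eqref{weak-pde-v2} with $\vec{g}=(I-A_h)\nabla v_1$, using the perturbation bound $\|I-A_h\|_{L^\infty(\Omega)}\le Ch^r\le Ch$ from \eqref{eq:perturb} together with the $W^{1,q}$ bound on $v_1$ inherited from \eqref{eq:r1}, to conclude
\[
\|v_2\|_{W^{1,q}(\Omega)}\le C_q\|(I-A_h)\nabla v_1\|_{L^q(\Omega)}\le C_q h\,\|v_1\|_{W^{1,q}(\Omega)}\le C_q h\,\|f\|_{L^q(\Omega)}.
\]
Combining this with the $W^{1,q}$ stability $\|R_hv_2\|_{W^{1,q}(\Omega)}\le C_q\|v_2\|_{W^{1,q}(\Omega)}$ from Proposition \ref{w1,p-stable} and the triangle inequality gives $\|v_2-R_hv_2\|_{W^{1,q}(\Omega)}\le C_q h\,\|f\|_{L^q(\Omega)}$. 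Adding the two bounds yields the claim, provided $h\le h_q$ is small enough for both Lemma \ref{gradient-lp} and Proposition \ref{w1,p-stable} to apply.

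The main technical obstacle is ensuring that the quasi-interpolant of $v_1$ actually lies in $\mathring S_h(\Omega)$ (so that the cancellation $R_h\check{\mathcal{I}}^h v_1=\check{\mathcal{I}}^h v_1$ is legitimate): Lagrange interpolation would be simpler but requires $W^{2,q}_h\hookrightarrow C(\overline\Omega)$, which fails in $\mathbb{R}^3$ for $q\le N/2$, so the Scott--Zhang construction of Lemma \ref{Scott-Zhang} together with boundary-preserving node averaging must be used and then pulled back through $\Phi_h$, relying on the norm equivalence stated in parts~3 and~5 of Lemma \ref{Lemma:basic} to guarantee that the $O(h)$ interpolation error is preserved under this change of variables.
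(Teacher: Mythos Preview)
Your proposal is correct and follows essentially the same route as the paper: split $v=v_1+v_2$ via \eqref{v-decomp}--\eqref{weak-pde-v2}, use the $W^{2,q}$ regularity of $v_1$ together with $W^{1,q}$ stability of $R_h$ (Proposition \ref{w1,p-stable}) to obtain quasi-optimality and hence $\|v_1-R_hv_1\|_{W^{1,q}}\le C_qh\|f\|_{L^q}$, and use Lemma \ref{gradient-lp} plus stability to bound $\|v_2-R_hv_2\|_{W^{1,q}}$ by $C_q\|v_2\|_{W^{1,q}}\le C_qh\|f\|_{L^q}$. Your explicit invocation of a boundary-preserving Scott--Zhang interpolant to cover the range $1<q\le N/2$ in three dimensions is a worthwhile clarification that the paper's proof leaves implicit when it writes $\inf_{\check\chi_h}\|v_1-\check\chi_h\|_{W^{1,q}(\Omega)}\le C_qh\|v_1\|_{W^{2,q}(\Omega)}$.
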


\begin{proof}
We consider the decomposition $v=v_1+v_2$ in \eqref{v-decomp}--\eqref{weak-pde-v2}. The $W^{2,q}$ estimate in \eqref{eq:r1} and the $W^{1,p}$ estimate in Lemma \ref{gradient-lp} imply that $v_1$ and $v_2$ satisfy the following estimates: 
\begin{align*}
    &\|v_1\|_{W^{2,q}(\Omega)}\leq C_q\|f\|_{L^q(\Omega)} \quad\forall\, 1<q<2+\varepsilon , \\
    &\|v_2\|_{W^{1,q}(\Omega)}
    \leq C_qh \|v_1\|_{W^{1,q}(\Omega)} 
    \leq C_qh\|f\|_{L^q(\Omega)}.
\end{align*}
Applying the $W^{1,q}$ stability of the Ritz projection, we obtain the following estimates: 
\begin{align*}
&\|v_1-R_h v_1\|_{W^{1,q}(\Omega)}\leq C_{q}\inf_{\xh\in \sh(\Omega)}\|v_1-\xh\|_{W^{1,q}(\Omega)}
    \leq C_{q} h \|v_1\|_{W^{2,q}(\Omega)} 
    \leq C_{q} h \|f\|_{L^q(\Omega)}, \\[5pt] 
&\|v_2-R_h v_2\|_{W^{1,q}(\Omega)}
    \leq C_{q}\|v_2\|_{W^{1,q}(\Omega)}
    \leq C_{q} h \|f\|_{L^q(\Omega)} . 
\end{align*}
The result of Lemma \ref{Lemma:Ritz-error-W1p} is obtained by combining the two estimates above. 
\end{proof}

Finally, the $L^p$ error estimate for the Ritz projection follows from a standard duality argument, again by using the regularity decomposition as in \eqref{v-decomp}--\eqref{weak-pde-v2} for the dual problem. 
\begin{lemma}\label{lp-error-ritz} 
Then for any $1<q<2+\varepsilon$ there exists a positive constant $h_q$ such that for $h\le h_q$ the following error estimate holds: 
\begin{align}
    \|u-R_hu\|_{L^{q'}(\Omega)}\leq C_q h\|u-R_hu\|_{W^{1,q'}(\Omega)}
    \quad\forall\, u\in H^1_0(\Omega)\cap W^{1,q'}(\Omega),
\end{align}
where $1/q+1/q'=1$.
\end{lemma}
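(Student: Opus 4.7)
The plan is a standard Nitsche-type duality argument built on top of the $W^{1,q}$ Ritz projection error estimate established in Lemma \ref{Lemma:Ritz-error-W1p}. Setting $e := u - R_h u \in H^1_0(\Omega) \cap W^{1,q'}(\Omega)$, I would start from the duality characterization
\[
\|e\|_{L^{q'}(\Omega)} = \sup_{\psi \in L^q(\Omega),\ \|\psi\|_{L^q(\Omega)} \le 1} (e, \psi),
\]
and for each admissible $\psi$ introduce the dual solution $v \in H^1_0(\Omega)$ of
$(A_h \nabla v, \nabla \chi) = (\psi, \chi)$ for every $\chi \in H^1_0(\Omega)$. This problem is well-posed by the coercivity recorded in \eqref{B-coercivity}, valid once $h$ is small enough.

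Exploiting the symmetry of $A_h$ together with the defining Galerkin orthogonality \eqref{def-Ritz-proj} of $R_h$, I would rewrite
\[
(e,\psi) = (A_h \nabla v, \nabla e) = (A_h \nabla e, \nabla(v - R_h v)).
\]
Applying H\"older's inequality and using $\|A_h\|_{L^\infty(\Omega)} \le C$ then gives
$|(e,\psi)| \le C\|\nabla e\|_{L^{q'}(\Omega)} \|\nabla(v - R_h v)\|_{L^q(\Omega)}$. Since $1 < q < 2+\varepsilon$, Lemma \ref{Lemma:Ritz-error-W1p} applies directly to the dual problem with source term $\psi$, yielding $\|v - R_h v\|_{W^{1,q}(\Omega)} \le C_q h \|\psi\|_{L^q(\Omega)} \le C_q h$ for $h \le h_q$. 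Taking the supremum over $\psi$ and bounding $\|\nabla e\|_{L^{q'}(\Omega)} \le \|e\|_{W^{1,q'}(\Omega)}$ then delivers the stated inequality.

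There is no essential obstacle here: all the nontrivial content has already been assembled, namely the regularity decomposition $v = v_1 + v_2$ of \eqref{v-decomp}--\eqref{weak-pde-v2}, the $W^{1,p}$ stability of the Ritz projection in Proposition \ref{w1,p-stable}, and the resulting $W^{1,q}$ error estimate of Lemma \ref{Lemma:Ritz-error-W1p}. The only points to double-check are the well-posedness of the dual problem (immediate from \eqref{B-coercivity} for small $h$) and the consistency of the admissible range $1 < q < 2+\varepsilon$ with the hypothesis under which Lemma \ref{Lemma:Ritz-error-W1p} gives an $O(h)$ bound. The duality argument itself, and the choice of $h_q$ as the minimum of the thresholds inherited from the earlier lemmas, is completely routine.
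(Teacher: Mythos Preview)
Your proposal is correct and follows essentially the same Nitsche-type duality argument as the paper: introduce the dual problem $(A_h\nabla v,\nabla\chi)=(\psi,\chi)$, use the symmetry of $A_h$ and the Galerkin orthogonality \eqref{def-Ritz-proj} to reduce to $(A_h\nabla e,\nabla(v-R_hv))$, and close with H\"older's inequality and Lemma~\ref{Lemma:Ritz-error-W1p}. The only cosmetic difference is that the paper picks a single $\varphi\in C^\infty_0(\Omega)$ realizing half the supremum (which conveniently places the dual source in $L^q\cap L^2$, matching the hypothesis of Lemma~\ref{Lemma:Ritz-error-W1p}), whereas you phrase it as a supremum over $\psi\in L^q$; this is harmless by density.
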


\begin{proof}
By using the duality between $L^q(\Omega)$ and $L^{q'}(\Omega)$, we can express the $L^{q'}$ error of the Ritz projection as 
$$
\|R_hu-u\|_{L^{q'}(\Omega)}=\sup_{
\begin{subarray}{c}
\varphi\in C^\infty_0(\Omega)\\
\|\varphi\|_{L^{q}(\Omega)}\leq 1
\end{subarray}
}
(R_hu-u, \varphi),
$$
In particular, there exists $\varphi\in C^\infty_0(\Omega)$ with $\|\varphi\|_{L^{q}(\Omega)}\leq 1$ such that 
$$
\|R_hu-u\|_{L^{q'}(\Omega)}\leq 2(R_hu-u, \varphi).
$$
Let $v\in H^1_0(\Omega)$ be the weak solution of the following elliptic equation (in the weak form): 
\begin{align*}
  (A_h\nabla v,\nabla\chi)=(\varphi,\chi)
\quad\forall\,\chi\in H^1_0(\Omega).
\end{align*}
Then
\begin{align*}
    (R_hu-u, \varphi)&=(A_h\nabla v, \nabla (R_hu-u))\\
    &=(A_h\nabla(R_hu-u), \nabla v)\\
    &=(A_h\nabla(R_hu-u), \nabla (v-R_h v))\\
    &\leq C\|R_hu-u\|_{W^{1,q'}(\Omega)}\|R_hv-v\|_{W^{1,q}(\Omega)}\\
    &\leq  C_q h\|R_hu-u\|_{W^{1,q'}(\Omega)}\|\varphi\|_{L^{q}(\Omega)} 
    \quad\mbox{(Lemma \ref{Lemma:Ritz-error-W1p} is used here)} \\
    &\leq C_q h\|R_hu-u\|_{W^{1,q'}(\Omega)}.
\end{align*}
This proves the result of Lemma \ref{lp-error-ritz} . 
\end{proof}

\subsection{Estimation of $\rho^{-\frac{N}2} h^{-1} \|\nabla (v- R_hv)\|_{L^1(\Lambda_h)}$}
\label{sec: main proof}

In this subsection, we prove \eqref{to-prove} by utilizing the results established in Sections \ref{section:Reguliarty}--\ref{section:W1p}, where $v$ is the solution of \eqref{weak-PDE-v}. 
This would complete the proof of Theorem \ref{THM1}. 
To this end, we consider a dyadic decomposition of the domain as in the literature; see \cite{Guzman_Leykekhman_Rossman_Schatz_2009,Leykekhman_Li_2021,Schatz-1980}. 

Let $R_0={\rm diam}(\Omega)$ and $d_j=R_02^{-j}$ for $j\ge 0$. We define a sequence of subdomains
$$
D_j=\{x\in\Omega:d_{j+1} \le|x-x_0|\le d_j \} \quad\mbox{for}\,\,\, j\ge0 . 
$$
For each $j$ we denote by $D_j^l$ a subdomain slightly larger than $D_j$, defined by
$$
D_j^l=
D_{j-l}
\cup
\cdots
\cup
D_{j}
\cup
D_{j+1}
\cup
\cdots
\cup
D_{j+l}\quad \mbox{ ($D_i:=\emptyset$ for $i<0$.)}
$$
Let $J=[\ln_2(R_0/2\kappa\rho)]+1$, where $[\ln_2(R_0/2\kappa\rho)]$ denotes the biggest integer not exceeding $\ln_2(R_0/2\kappa\rho)$.  The constant $\kappa>32$ will be determined below, and the generic constant $C$ will be independent on $\kappa$ until it is determined (unless it contains a subscript $\kappa$).
The definition above implies that 
$$
\frac 12\kappa\rho\le d_{J+1}\le \kappa\rho
$$
and
\begin{align}\label{volume-Aj}
{\rm measure}(D_j\cap \Lambda_h) \leq C h d_j^{N-1} .
\end{align}
Note that $v$ is the solution of \eqref{weak-PDE-v}, where $\varphi=0$ outside $S_\rho(x_0)$. Therefore, $\varphi=0$ in $D_j^3$ for $1\le j\le J$. This result will be used below. 

By using the subdomains defined above, we have
\begin{align}\label{remains-1}
&\rho^{-\frac{N}2} h^{-1} \|\nabla (v- R_hv)\|_{L^1(\Lambda_h)}
\nonumber \\
&\le
\rho^{-\frac{N}2} h^{-1}
\bigg(\sum_{j=0}^J
\|\nabla (v- R_hv)\|_{L^1(\Lambda_h\cap D_j)}
+
\|\nabla (v-R_hv )\|_{L^1(\Lambda_h\cap S_{\kappa\rho}(x_0))}
\bigg)
\nonumber \\
&\le
C\rho^{-\frac{N}2} h^{-1} \sum_{j=0}^J h^{\frac12}d_j^{\frac{N-1}{2}}
\|\nabla (v-R_hv )\|_{L^2(\Lambda_h\cap D_j)}
\nonumber \\
&\quad\,
+
C\kappa^{\frac{N-1}{2}}\rho^{-\frac12} h^{-\frac12} \|\nabla (v-R_hv )\|_{L^2(\Lambda_h\cap S_{\kappa\rho}(x_0))},
\end{align}
where the H\"older inequality and \eqref{volume-Aj} are used in the derivation of the last inequality. By choosing $q=2$ in Lemma \ref{Lemma:Ritz-error-W1p} we have 
\begin{align}\label{a-use1}
    \|\nabla(v-R_hv)\|_{L^2(\Omega)}&\leq Ch\|\varphi\|_{L^2(\Omega)}\leq Ch .
\end{align}
Then, substituting \eqref{a-use1} into the last term on the right-hand side of \eqref{remains-1} and using the fact that $\rho\ge h$ (which follows from the definition of $\rho$ in \eqref{Linfty-1}), we obtain
\begin{align}\label{remain-1}
    \rho^{-\frac{N}2} h^{-1} \|\nabla (v- R_hv)\|_{L^1(\Lambda_h)}
    \leq C\rho^{-\frac{N}2} h^{-\frac12} \sum_{j=0}^J d_j^{\frac{N-1}{2}}
\|\nabla (v-R_hv )\|_{L^2(D_j)}+C_\kappa,
\end{align}
where $C_\kappa$ denotes a constant which depends on the parameter $\kappa$.

It remains to estimate $\|\nabla (v-R_hv )\|_{L^2(D_j)}$. To this end, we use the following interior energy estimate for the solution of \eqref{weak-PDE-v}:
\begin{align}\label{eq:interior-energy}
\|v-R_hv \|_{H^1(D_j)}\leq 
C\|v-\check{I}_hv\|_{H^1(D_j^1)}+Cd_j^{-1}\|v-\check{I}_hv\|_{L^2(D_j^1)}+Cd_j^{-1}\|v-R_hv \|_{L^2(D_j^1)} . 
\end{align}
The proof of such interior energy estimate is omitted as it only requires the coefficient matrix $A_h$ to be $L^\infty$ in the perturbed bilinear form in \eqref{perturbed-bilinear}, without additional smoothness, and therefore is the same as the proof for standard finite elements for the Poisson equation.

We use the decomposition $v=v_1+v_2$ in \eqref{v-decomp}--\eqref{weak-pde-v2} with $f=\varphi$ supported in $S_\rho(x_0)$, and consider interpolation error of $v_1$ and $v_2$, respectively. 
First, by applying the result of Lemma \ref{Proposition-A0} and using the fact that $d_j>h$, we have
\begin{align}\label{interior-H2}
\|v_1-\check{I}_hv_1\|_{H^1(D_j^1)}+{d_j}^{-1}\|v_1-\check{I}_hv_1\|_{L^2(D_j^1)}
\le
Ch\|v_1\|_{H^{2}(D_j^2)}
&\le
Ch d_j^{-1+\frac{N}2-\frac{N}{p}}\|v_1\|_{W^{1,p}(\Omega)}
\nonumber \\
\quad\mbox{for}\,\,\, \mbox{$\frac{2N}{N+2}$}<p<2,
\end{align}
where we have used the following inequality in deriving the last inequality:
\begin{align}\label{interior-harmonic}
\|v_1\|_{H^{2}(D_j^2)}\le
Cd_j^{\frac12-\frac{3}{p}}\|v_1\|_{W^{1,p}(\Omega)}
\quad \mbox{for}\,\,\, \mbox{$\frac{2N}{N+2}$}<p<2.
\end{align}
The inequality above follows from Lemma \ref{lemma: Caccipoli} (because $v_1$ is the solution of \eqref{weak-pde-v1} with $f=\varphi=0$ in $D_j^3$), the H\"older inequality and the Sobolev embedding inequality, i.e.,
$$
\begin{aligned}
\|v_1\|_{H^{2}(D_j^2)}
&\le Cd_j^{-2}\|v_1\|_{L^{2}(D_j^3)}\\
&\le Cd_j^{-2+\frac{N}2-\frac{N}{p_*}}\|v_1\|_{L^{p_*}(D_j^3)} &&\mbox{if $p_*>2$}\\
&\le Cd_j^{-1+\frac{N}2-\frac{N}{p}}\|v_1\|_{W^{1,p}(\Omega)}
&&
\mbox{for $\frac{N}{p_*}=\frac{N}p-1$ and $\mbox{$\frac{2N}{N+2}$}<p<2$}\\
&
&&\mbox{so that $p_*>2$ and $W^{1,p}(\Omega)\hookrightarrow L^{p^*}(\Omega)$}
.
\end{aligned}
$$
Here we require $\kappa>32$ to guarantee that $d_{J+5}>\rho$, which is required in the use Lemma \ref{lemma: Caccipoli}. This proves the last inequality in \eqref{interior-H2}.

Next, we consider the interpolation error of $v_2$ by using Lemma \ref{Proposition-A0} and H\"older inequality, i.e.,  
\begin{align}\label{eq:v_2-interpolate}
    \|v_2-\check{I}_hv_2\|_{H^1(D_j^1)}+{d_j}^{-1}\|v_2-\check{I}_hv_2\|_{L^2(D_j^1)}&\leq Cd_j^{\frac{N}{2}-\frac{N}{p_1}}\|v_2\|_{W^{1,p_1}(D_j^2)} \nonumber\\
    &\leq Cd^{\frac{N}{2}-\frac{N}{q_1}}_jh\|v_1\|_{W^{1,q_1}(\Omega)}\nonumber\\
    &\mbox{for some}\,\,\, p_1>N\,\,\,\mbox{and}\,\,\,\frac{N}{q_1}=\frac{N}{p_1}+1 ,
\end{align}
where we have applied Corollary \ref{v2-weak-cappo} in deriving the last inequality. (Here we only need $p_1$ to be slightly bigger than $N$, and therefore the corresponding $q_1$ here can be smaller than $2$, so that we can use H\"older inequality to estimate $\|\varphi\|_{L^{q_1}(S_\rho(x_0))}$ below.) 

By combining \eqref{interior-H2} and \eqref{eq:v_2-interpolate}, we obtain
\begin{align}\label{eq:v-interpolate}
& C\|v-\check{I}_hv\|_{H^1(D_j^1)}+C{d_j}^{-1}\|v-\check{I}_hv\|_{L^2(D_j^1)} \notag\\
&\leq Ch d_j^{-1+\frac{N}2-\frac{N}{p}}\|v_1\|_{W^{1,p}(\Omega)}+Cd^{\frac{N}{2}-\frac{N}{q_1}}_j h\|v_1\|_{W^{1,q_1}(\Omega)}\nonumber\\
 &\leq Ch d_j^{-1+\frac{N}2-\frac{N}{p}}\rho\|\varphi\|_{L^p(S_\rho(x_0))} +Ch d^{\frac{N}{2}-\frac{N}{q_1}}_j \rho\|\varphi\|_{L^{q_1}(S_\rho(x_0))}  \notag\\ 
    &\leq Chd_j^{-1+\frac{N}2-\frac{N}{p}}\rho^{1-\frac{N}{2}+\frac{N}{p}}
    +Chd_j^{\frac{N}{2}-\frac{N}{q_1}} \rho^{1-\frac{N}{2}+\frac{N}{q_1}},
\end{align}
where we have applied Lemma \ref{lemma:w1p-d} to equation \eqref{weak-pde-v1} in the derivation of the second inequality, and used H\"older inequality in the derivation of the last inequality. 

Finally, substituting \eqref{eq:v-interpolate} into \eqref{eq:interior-energy}, we obtain
\begin{align}\label{interior-3}
&d_j^{\frac{N-1}{2}}\|\nabla (v-R_hv )\|_{L^2(D_j)} \notag\\
&\leq
Chd_j^{N-\frac32-\frac{N}{p}}\rho^{1-\frac{N}{2}+\frac{N}{p}}
    +Chd_j^{N-\frac12-\frac{N}{q_1}} \rho^{1-\frac{N}{2}+\frac{N}{q_1}}
+C d_j^{\frac{N-3}{2}}\|v-R_hv \|_{L^2(D_j^1)}\nonumber\\
&\leq Chd_j^{N-\frac32-\frac{N}{p}}\rho^{1-\frac{N}{2}+\frac{N}{p}}
+C d_j^{\frac{N-3}{2}}\|v-R_hv \|_{L^2(D_j^1)},
\end{align}
where we have chosen $p=q_1<2$ and used $d_j\leq C$ in the derivation of the last inequality. 
Here we can make $p$ as close to $2$ as possible so that $p=q'$ satisfies the condition in Lemma \ref{lp-error-ritz} (which will be used in the subsequent analysis). 

Now we substitute \eqref{interior-3} into \eqref{remain-1} and use the result 
$
    \sum_{j=0}^J d_j^{N-\frac32-\frac{N}{p}}\leq C_\kappa \rho^{N-\frac32-\frac{N}{p}} ,
$
we obtain 
\begin{align}\label{sum-dj-grad}
\sum_{j=0}^Jd_j^{\frac{N-1}{2}}\|\nabla (v-R_hv )\|_{L^2(D_j)} 
\le C_\kappa h\rho^{\frac{N-1}{2}} 
+ \sum_{j=0}^JCd_j^{\frac{N-3}{2}}  \|v-R_hv \|_{L^2(D_j^1)},
\end{align}
and therefore
\begin{align}\label{remains-4}
\rho^{-\frac{N}2} h^{-1} \|\nabla (v-R_hv )\|_{L^1(\Lambda_h)}
&\leq C\rho^{-\frac{N}2} h^{-\frac12} \sum_{j=0}^J d_j^{\frac{N-1}{2}}
\|\nabla (v-R_hv )\|_{L^2(D_j)}+C_\kappa  \notag\\
&\leq
C_\kappa+C\rho^{-\frac{N}2} h^{-\frac12}\sum_{j=0}^Jd_j^{\frac{N-3}{2}} \|v-R_hv \|_{L^2(D_j^1)}
. 
\end{align}

It remains to estimate $\sum_{j=0}^Jd_j^{\frac{N-3}{2}}\|v-R_hv \|_{L^2(D_j^1)}$. To this end, we let $\chi$ be a smooth cut-off function satisfying
$$
\chi=1
\,\,\,\mbox{on}\,\,\,
D_j^1
,\quad
\chi=0
\,\,\,\mbox{outside}\,\,\,
D_j^2 
\quad\mbox{and}\quad |\nabla\chi|\le Cd_j^{-1} . 
$$

For $N=2,3$ the following Sobolev interpolation inequality holds:
\begin{align}\label{theta-p-N-1}
\|\chi(v-R_hv) \|_{L^2(\Omega)}
\le
\|\chi(v-R_hv) \|_{L^p(\Omega)}^{1-\theta}
\|\chi(v-R_hv) \|_{H^1(\Omega)}^\theta
\quad\mbox{with}\,\,\,
\frac{1}{2}
=\frac{1-\theta}{p}+\frac{\theta}{p_*} ,
\end{align}
where $p_*=\infty$ for $N=2$ and $p_*=6$ for $N=3$. 
For both $N=2$ and $N=3$, the parameter $\theta$ determined by \eqref{theta-p-N-1}
satisfies the following relation: 
\begin{align}\label{theta-p-N}
\frac{N}{p}-\frac{N}{2} = \frac{\theta}{1-\theta} . 
\end{align}
We can choose $p$ sufficiently close to $2$ as mentioned below \eqref{interior-3}. 
Since  
\begin{align}\label{v-vh-L6}
C\|\chi(v-R_hv )\|_{H^1(\Omega)} 
&\le
C\|\nabla(v-R_hv )\|_{L^2(D_j^2)}
+Cd_j^{-1}\|v-R_hv \|_{L^2(D_j^2)} 
\end{align} 
it follows that 
\begin{align*}
&\|v-R_hv \|_{L^2(D_j^1)} \\
&\le
\|v-R_hv \|_{L^p(D_j^2)}^{1-\theta}
\big(C\|\nabla(v-R_hv )\|_{L^2(D_j^2)}
+Cd_j^{-1}\|v-R_hv \|_{L^2(D_j^2)}\big)^{\theta} \\
&=
(\epsilon^{-\frac{\theta}{1-\theta}}\|v-R_hv \|_{L^p(D_j^2)})^{1-\theta}
\big(C\epsilon\|\nabla(v-R_hv )\|_{L^2(D_j^2)}
+C\epsilon d_j^{-1}\|v-R_hv \|_{L^2(D_j^2)}\big)^{\theta} \\
&\le
C\epsilon^{-\frac{\theta}{1-\theta}}\|v-R_hv \|_{L^p(D_j^2)}
+C\epsilon\|\nabla(v-R_hv )\|_{L^2(D_j^2)}
+C\epsilon d_j^{-1}\|v-R_hv \|_{L^2(D_j^2)} ,
\end{align*}
where $\epsilon$ can be an arbitrary positive number. 

By choosing $\epsilon=d_j(\rho/d_j)^{\sigma}$ with a fixed $\sigma\in(0,1)$, we obtain
\begin{align}\label{Lp-interpl2}
\|v-R_hv \|_{L^2(D_j^1)}
&\le
C\bigg(\frac{\rho}{d_j}\bigg)^{-\frac{\theta\sigma}{1-\theta}}
d_j^{-\frac{\theta}{1-\theta}}\|v-R_hv \|_{L^p(D_j^1)} \\
&\quad\,
+\bigg(\frac{\rho}{d_j}\bigg)^{\sigma}
\big(Cd_j\|\nabla(v-R_hv )\|_{L^2(D_j^2)}
+C\|v-R_hv \|_{L^2(D_j^2)} \big) .\nonumber
\end{align}
Hence,
\begin{align}\label{Lp-interpl3}
&\rho^{-\frac{N}2} h^{-\frac12}\sum_{j=0}^J d_j^{\frac{N-3}{2}} \|v-R_hv \|_{L^2(D_j^1)} \notag \\
&\le
C\rho^{-\frac{N}2} h^{-\frac12}
\sum_{j=0}^J \bigg(\frac{\rho}{d_j}\bigg)^{-\frac{\theta\sigma}{1-\theta}}
d_j^{-\frac{\theta}{1-\theta}+\frac{N-3}{2}}\|v-R_hv \|_{L^p(D_j^2)} \notag \\
&\quad\,
+
C\rho^{-\frac{N}2} h^{-\frac12}\sum_{j=0}^J
\bigg(\frac{\rho}{d_j}\bigg)^{\sigma}
\big(d_j^{\frac{N-1}{2}}\|\nabla(v-R_hv )\|_{L^2(D_j^2)}
+Cd_j^{\frac{N-3}{2}} \|v-R_hv \|_{L^2(D_j^2)} \big) \notag \\
&\le
C\rho^{-\frac{N}2} h^{-\frac12}\sum_{j=0}^J \bigg(\frac{\rho}{d_j}\bigg)^{-\frac{\theta\sigma}{1-\theta}}
d_j^{-\frac{\theta}{1-\theta}+\frac{N-3}{2}}\|v-R_hv \|_{L^p(D_j^2)}\notag \\
&\quad\,
+
C_\kappa 
+
C\rho^{-\frac{N}2} h^{-\frac12}\bigg(\frac{\rho}{d_J}\bigg)^{\sigma}\sum_{j=0}^J d_j^{\frac{N-3}{2}}  \|v-R_hv \|_{L^2(D_j^{3})} ,
\end{align}
where we have used \eqref{sum-dj-grad} and the fact $\frac{\rho}{d_j}\leq \frac{\rho}{d_J}$ in deriving the last inequality.
Note that
\begin{align*}
\sum_{j=0}^J d_j^{\frac{N-3}{2}}\|v-R_hv \|_{L^2(D_j^{3})} 
&\le
C d_J^{\frac{N-3}{2}}\|v-R_hv \|_{L^2(S_{\kappa\rho}(x_0))}
+
3\sum_{j=0}^J d_j^{\frac{N-3}{2}}\|v-R_hv \|_{L^2(D_j^1)} .
\end{align*}
Combining the last two estimates, we obtain
\begin{align*}
\rho^{-\frac{N}2} h^{-\frac12} \sum_{j=0}^Jd_j^{\frac{N-3}{2}}\|v-R_hv \|_{L^2(D_j^1)} 
&\le
C\rho^{-\frac{N}2} h^{-\frac12}\sum_{j=0}^J \bigg(\frac{\rho}{d_j}\bigg)^{-\frac{\theta\sigma}{1-\theta}}
d_j^{-\frac{\theta}{1-\theta}+\frac{N-3}{2}}\|v-R_hv \|_{L^p(D_j^2)}
\notag \\
&\quad\,
+
C_\kappa 
+ C\rho^{-\frac{N}2} h^{-\frac12}\bigg(\frac{\rho}{d_J}\bigg)^{\sigma}d_J^{\frac{N-3}{2}}\|v-R_hv \|_{L^2(S_{\kappa\rho}(x_0))} \\
&\quad\,
+
C\rho^{-\frac{N}2} h^{-\frac12}\bigg(\frac{\rho}{d_J}\bigg)^{\sigma}\sum_{j=0}^J d_j^{\frac{N-3}{2}}\|v-R_hv \|_{L^2(D_j^1)}.\\
\end{align*}
For the fixed $\sigma\in(0,1)$, by choosing a sufficiently large parameter $\kappa$ we have 
$
  \big(\frac{\rho}{d_J}\big)^\sigma\leq \frac{C}{\kappa^\sigma},
$
and therefore the last term of the inequality above can be absorbed by the left-hand side.
From now on we fix the parameter $\kappa$. Then we have
\begin{align}\label{L2-Lp}
\sum_{j=0}^J\rho^{-\frac{N}2} h^{-\frac12}d_j^{\frac{N-3}{2}} \|v-R_hv \|_{L^2(D_j^1)} 
&\le
\sum_{j=0}^J C\rho^{-\frac{N}2} h^{-\frac12}
\bigg(\frac{\rho}{d_j}\bigg)^{-\frac{\theta\sigma}{1-\theta}}
d_j^{-\frac{\theta}{1-\theta}+\frac{N-3}{2}}\|v-R_hv \|_{L^p(D_j^2)} \notag \\
&\quad\,
+
C_\kappa
+ C\rho^{-\frac{N}2} h^{-\frac12}\bigg(\frac{\rho}{d_J}\bigg)^{\sigma}d_J^{\frac{N-3}{2}}\|v-R_hv \|_{L^2(S_{\kappa\rho}(x_0))}. 
\end{align}

It remains to estimate $\|v-R_hv \|_{L^p(D_j^1)}$ and $\|v-R_hv \|_{L^2(S_{\kappa\rho}(x_0))}$. This is done by applying Lemma \ref{lp-error-ritz} (with $q'=p$ therein), Lemma \ref{Lemma:Ritz-error-W1p} (with $q=p$ therein) and H\"older's inequality, i.e., 
\begin{align}\label{Lp-interpl5}
&\|v-R_hv \|_{L^p(\Omega)}
\le
Ch^{2} \|\varphi\|_{L^{p}(\Omega)}
\le Ch^2\rho^{\frac{N}p-\frac{N}2} ,\\
&\|v-R_hv \|_{L^2(\Omega)}
\le
Ch^{2} \quad\mbox{(setting $q'=q=2$ in Lemma \ref{lp-error-ritz}  and Lemma \ref{Lemma:Ritz-error-W1p} )}.
\end{align}
Then, substituting these estimates into \eqref{L2-Lp}, we obtain
\begin{align}\label{L2-Lp-2}
\sum_{j=0}^J\rho^{-\frac{N}2} h^{-\frac12}d_j^{\frac{N-3}{2}} \|v-R_hv \|_{L^2(D_j^1)}
&\le
\sum_{j=0}^J C \bigg(\frac{h}{\rho}\bigg)^{\frac{N}2}\bigg(\frac{h}{d_j}\bigg)^{\frac{3-N}{2}}
\bigg(\frac{\rho}{d_j}\bigg)^{\frac{N}p-\frac{N}2-\frac{\theta\sigma}{1-\theta}}
d_j^{\frac{N}p-\frac{N}2-\frac{\theta}{1-\theta}}\notag \\
&\quad\,
+
C_\kappa + C\bigg(\frac{h}{\rho}\bigg)^{\frac{N}2} \bigg(\frac{h}{d_J}\bigg)^{\frac{3-N}2}\bigg(\frac{\rho}{d_J}\bigg)^{\sigma} 
\end{align} 
By choosing $p<2$ to be sufficiently close to $2$ (so that $q'=p$ satisfies the condition of Lemma \ref{lp-error-ritz}) and using the relation $\frac{N}{p}-\frac{N}{2} = \frac{\theta}{1-\theta}$ as shown in \eqref{theta-p-N}, we obtain
\begin{align}
&\sum_{j=0}^J\rho^{-\frac{N}2} h^{-\frac12}d_j^{\frac{N-3}{2}} \|v-R_hv \|_{L^2(D_j^1)}
\le C.
\end{align}
Then, substituting the last inequality into the right-hand side of \eqref{remains-4}, we obtain 
\begin{align*} 
\rho^{-\frac{N}2} h^{-1} \|\nabla (v- R_hv)\|_{L^1(\Lambda_h)}\le C.
\end{align*}
This proves \eqref{to-prove}  for sufficiently small mesh size, say $h\le h_0$. This condition is required when we use Corollary \ref{v2-weak-cappo}, Lemma \ref{Lemma:Ritz-error-W1p} and  Lemma \ref{lp-error-ritz} in this subsection.

In the case $h\ge h_0$, we denote by $\tilde{g}_h\in S_h(\Omega_h)$ the isoparametric finite element function satisfying $\tilde{g}_h=u_h$ on $\partial\Omega_h$ and  $\tilde{g}_h=0$ at the interior nodes  of the domain $\Omega_h$. Then the following estimate holds: 
$$
\|\tilde{g}_h\|_{L^\infty(\Omega_h)}\le C\|u_h\|_{L^\infty(\partial\Omega_h)} .
$$
Since $\chi_h=u_h-\tilde{g}_h\in \mathring S_h(\Omega_h)$, 
it follows from \eqref{discrete-harmonic} that 
$$
 0 = \int_{\Omega_h}\nabla u_h\cdot \nabla (u_h-\tilde{g}_h) 
    = \|\nabla (u_h-\tilde{g}_h)\|^2_{L^2(\Omega_h)} + \int_{\Omega_h}\nabla \tilde g_h\cdot \nabla (u_h-\tilde{g}_h) ,
$$
and therefore 
$$
\begin{aligned}
\|\nabla (u_h-\tilde{g}_h)\|^2_{L^2(\Omega_h)}
= - \int_{\Omega_h}\nabla \tilde g_h\cdot \nabla (u_h-\tilde{g}_h) 
&\le
C\|\nabla\tilde{g}_h\|_{L^2(\Omega_h)}\|\nabla(u_h-\tilde{g}_h)\|_{L^2(\Omega_h)}.
\end{aligned}
$$
Thus, by using the inverse inequality and the condition $h\ge h_0$, we have
$$
\begin{aligned}
\|\nabla (u_h-\tilde{g}_h)\|_{L^2(\Omega_h)}
\le
C\|\nabla \tilde{g}_h\|_{L^2(\Omega_h)}
\le
Ch^{-1}\|\tilde{g}_h\|_{L^2(\Omega_h)}
&\le
Ch_0^{-1}\|\tilde{g}_h\|_{L^\infty(\Omega_h)} \\
&\le Ch_0^{-1}\|u_h\|_{L^\infty(\partial\Omega_h)}.
\end{aligned}
$$
By using the inverse inequality again, we obtain
\begin{align*}
\|u_h-\tilde{g}_h\|_{L^\infty(\Omega_h)}
&\le
Ch^{-\frac{N}2}\|u_h-\tilde{g}_h\|_{L^2(\Omega_h)} \\
&\le
Ch^{-\frac{N}2}\|\nabla (u_h-\tilde{g}_h)\|_{L^2(\Omega_h)} \\
&\le Ch_0^{-\frac{N}2-1}\|u_h\|_{L^\infty(\partial\Omega_h)}
.
\end{align*}
By the triangle inequality, this proves
 $$
 \|u_h\|_{L^\infty(\Omega_h)} \le  \|\tilde{g}_h\|_{L^\infty(\Omega_h)}+\|u_h-\tilde{g}_h\|_{L^\infty(\Omega_h)}\le C \|u_h\|_{L^\infty(\partial\Omega_h)}
 $$
for $h\ge h_0$.

Combining the two cases $h\le h_0$ and $h\ge h_0$, we obtain the result of Theorem \ref{THM1}.
\qed

\section{Proof of Theorem \ref{THM2}}\label{sec: stability Ritz}
\setcounter{equation}{0}

In this section, we adapt Schatz's argument in \cite{Schatz-1980} to the proof of maximum-norm stability of isoparametric finite element solutions of the Poisson equation in the curvilinear polyhedron considered here. The argument is based on the weak maximum principle established in Theorem \ref{THM1} and the following technical result, which asserts that the $W^{1,\infty}$ regularity estimate of the Poisson equation can hold in a family of larger perturbed domains $\Omega^t$, $t\in[0,\delta]$, such that ${\rm dist}(\partial\Omega^t,\partial\Omega)\sim t$ and the $W^{1,\infty}$ estimate is uniformly with respect to $t\in[0,\delta]$.

\begin{remark}
Here we make a remark on the idea of our proof. To prove Theorem \ref{THM2}, we observe that the numerical solution $u_h$ is in fact the Ritz projection of $u^{(h)}\in H^1_0(\Omega_h)$ which is the exact solution of the Poisson equation on $\Omega_h$:
\begin{align*}
	-\Delta u^{(h)} = f \quad\mbox{in}\,\,\,\Omega_h\quad\mbox{($f$ is extended by zero outside $\Omega$)},
\end{align*}
in the sense that
\begin{align*}
	R_h(u^{(h)}\circ \Phi_h^{-1})=u_h\circ \Phi_h^{-1}.
\end{align*}
Using the weak maximum principle established in Theorem \ref{THM1}, one can imitate the proof of \cite[Theorem 5.1]{Leykekhman_Li_2021} to show that there holds $L^\infty$ stability for our Ritz projection $R_h$. It follows that
\begin{align*}
	\|u^{(h)}-u_h\|_{L^\infty(\Omega_h)}\leq C\|u^{(h)}-I_hu^{(h)}\|_{L^\infty(\Omega_h)}.
\end{align*} 
Now we can obtain the result of Theorem \ref{THM2} as long as we establish the estimate
\begin{align*}
	\|u-u^{(h)}\|_{L^\infty(\Omega_h)}\leq Ch^{r+1}\|f\|_{L^p(\Omega)} \quad \mbox{($p>N$)},
\end{align*}
where we have extended $u$ by zero outside $\Omega$. To this end, we consider employing the maximum principle of harmonic functions since $\Delta(u^{(h)}-u)=0$ in $\Omega\cap\Omega_h$. Here technically we introduce larger perturbed domain $\Omega^t$ and solution $u^t$
\begin{align*}
	 -\Delta u^t = f \quad\mbox{in}\,\,\,\Omega^t,
\end{align*}
in the larger perturbed domain $\Omega^t$. Then using maximum principle, we compare $u$ and $u^{(h)}$ with $u^t$ respectively, for example we have
\begin{align*}
	\|u-u^t\|_{L^\infty(\Omega)}\leq \|u^t\|_{L^\infty(\partial\Omega)}\leq Ch^{r+1}\|u^t\|_{W^{1,\infty}(\Omega^t)}.
\end{align*}
This explains the motivation of establishing Proposition \ref{tabular-neighbor}.
\end{remark}

\begin{proposition}\label{tabular-neighbor}
Let $\Omega$ be a curvilinear polyhedron with edge openings smaller than $\pi$, and define   
$$
\Omega(\varepsilon):=\{x\in \mathbb{R}^N\;:\; {\rm dist}(x,\Omega)<\varepsilon\} , 
$$
which is an $\varepsilon$ neighborhood of $\Omega$. 
Then there exist constants $\delta>0$ and $\lambda>0$ and a family of larger bounded domains $\Omega^t$ satisfying 
$$\Omega(\lambda t)\subseteq\Omega^t \subseteq \Omega(\lambda^{-1} t)\quad\forall t\in[0,\delta] , $$
such that the weak solution $u^t\in H^1_0(\Omega^t)$ of the Poisson equation  
\begin{align}\label{eq:u-delta}
    -\Delta u^t = f \quad\mbox{in}\,\,\,\Omega^t, \quad\mbox{with}\,\,\,  f\in L^p(\Omega^t) \,\,\,\mbox{for some $p>N$}, 
\end{align}
satisfies the following estimate: 
\begin{align}\label{uniform-w2p}
    \|u^t\|_{W^{1,\infty}(\Omega^t)}\leq C_p\|f\|_{L^p(\Omega^t)} \quad\mbox{for}\,\,\, t\in[0,\delta] ,  
\end{align}
where $C_p$ is some constant which is independent of $t\in[0,\delta]$. 
%
\end{proposition}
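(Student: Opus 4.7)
The plan is to construct $\Omega^t$ as the image of $\Omega$ under a globally smooth outward flow, show that the resulting domains form a uniformly nice family of curvilinear polyhedra, and then transport the $W^{1,\infty}$ regularity estimate from a single reference equation on $\Omega$ via a change of variables.

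\textbf{Step 1: Construction of an outward flow.} Because every edge opening of $\Omega$ is strictly less than $\pi$, at each boundary point $x\in\partial\Omega$ the set of outward directions forms a nonempty open convex cone $\mathcal{C}(x)$ (a half-space at smooth points, the outward cone of a dihedral angle $<\pi$ at edge points, and the outward cone of a convex polyhedral cone at vertex points). Using a locally finite partition of unity subordinate to the standard coordinate charts $\varphi_x$ of Section \ref{sec:isopara}, I would splice together local vector fields $V_x$ with $V_x(y)\in\mathcal{C}(y)$ and $|V_x|\ge c>0$ on the support of the cutoff, obtaining a globally smooth vector field $V\in C^\infty_c(\mathbb R^N;\mathbb R^N)$ such that $V(y)$ lies in $\mathcal{C}(y)$ for every $y\in\partial\Omega$, with $|V|\ge c_0$ on $\partial\Omega$. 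Let $\Psi^t$ be the flow of $V$, and set $\Omega^t:=\Psi^t(\Omega)$.

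\textbf{Step 2: Geometric properties of $\Omega^t$.} For small $t\in[0,\delta]$, $\Psi^t$ is a smooth diffeomorphism of $\mathbb R^N$ close to the identity, with $\|D^s(\Psi^t-{\rm Id})\|_{L^\infty}\le C t$ for all fixed $s\ge 1$. The combinatorial structure (faces, edges, vertices) of $\Omega^t$ is in bijection with that of $\Omega$ via $\Psi^t$, and the local charts $\varphi_x\circ(\Psi^t)^{-1}$ show that $\Omega^t$ is again a curvilinear polyhedron; since $\Psi^t$ is close to the identity in $C^1$, the edge openings of $\Omega^t$ differ from those of $\Omega$ by $O(t)$ and are therefore uniformly bounded away from $\pi$ for $t\in[0,\delta]$ with $\delta$ small. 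The lower bound $|V|\ge c_0$ together with the outward condition yields $\Omega(\lambda t)\subseteq\Omega^t$, and the bound $|V|\le C$ yields $\Omega^t\subseteq\Omega(\lambda^{-1}t)$, for some $\lambda>0$ independent of $t$.

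\textbf{Step 3: Uniform $W^{1,\infty}$ estimate.} For $u^t$ solving \eqref{eq:u-delta}, introduce $w^t:=u^t\circ\Psi^t\in H^1_0(\Omega)$, which satisfies
\begin{equation*}
-\nabla\!\cdot\!\bigl(A^t\nabla w^t\bigr)=g^t\quad\mbox{in}\,\,\Omega,\qquad w^t=0\,\,\mbox{on}\,\,\partial\Omega,
\end{equation*}
where $A^t=(\det\nabla\Psi^t)\,(\nabla\Psi^t)^{-1}(\nabla\Psi^t)^{-\top}$ and $g^t=(f\circ\Psi^t)\det\nabla\Psi^t$. Since $\|A^t-I\|_{C^k(\overline\Omega)}=O(t)$ uniformly on $[0,\delta]$ and $\|g^t\|_{L^p(\Omega)}\le C\|f\|_{L^p(\Omega^t)}$, it suffices to establish the uniform estimate $\|w^t\|_{W^{1,\infty}(\Omega)}\le C_p\|g^t\|_{L^p(\Omega)}$ for this perturbed equation on the fixed curvilinear polyhedron $\Omega$. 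On $\Omega$ itself (and with constant coefficient $A^0=I$), the $W^{1,\infty}$ estimate for the Poisson equation with $L^p$ right-hand side, $p>N$, follows from the pointwise Green function bounds of Guzm\'an--Leykekhman--Ro\ss mann--Schatz cited in Section \ref{section:W1p} (inequality \eqref{Holder-Green}). I would then run a perturbation/freezing-coefficient argument: writing the equation as $-\Delta w^t=g^t+\nabla\!\cdot\!((A^t-I)\nabla w^t)$, applying the $W^{1,\infty}$ estimate on $\Omega$, and absorbing the term $\|(A^t-I)\nabla w^t\|_{L^\infty}\le Ct\|w^t\|_{W^{1,\infty}}$ into the left-hand side for $t$ small. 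Transporting back, $\|u^t\|_{W^{1,\infty}(\Omega^t)}\le C\|w^t\|_{W^{1,\infty}(\Omega)}\le C_p\|f\|_{L^p(\Omega^t)}$.

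\textbf{Main obstacle.} The delicate point is Step 1 combined with the uniformity claim in Step 2: one has to produce a \emph{single} smooth vector field whose direction lies in the outward cone at \emph{every} boundary point — smooth, edge, and vertex — while preserving enough regularity so that edge openings of $\Omega^t$ stay uniformly below $\pi$. Concavity of the polyhedron is not an obstruction (the edge condition is the only local convexity needed), but the cone condition at vertices must be handled via the explicit polyhedral cone $K_x$ in the chart $\varphi_x$, and the partition-of-unity construction must respect the nested face/edge/vertex structure so that outwardness is not destroyed by mixing. Once such a $V$ is built, the remaining perturbation arguments are routine.
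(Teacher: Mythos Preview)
Your Steps~1--2 match the paper's construction almost exactly: the paper also builds a compactly supported smooth vector field $X$ with $\langle X(x),N_x\rangle\ge c>0$ at every smooth boundary point (via constant vectors in each chart and a partition of unity), takes $\Omega^t=\Psi_t(\Omega)$ for the induced flow, and proves the sandwich $\Omega(\lambda t)\subseteq\Omega^t\subseteq\Omega(\lambda^{-1}t)$. Your identification of the ``main obstacle'' is also where the paper spends its effort (their Lemmas~\ref{flow-diffeo} and \ref{vector-cond}).

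The gap is in Step~3. Your perturbation argument writes $-\Delta w^t=g^t+\nabla\!\cdot\!((A^t-I)\nabla w^t)$ and proposes to absorb $\|(A^t-I)\nabla w^t\|_{L^\infty}\le Ct\|w^t\|_{W^{1,\infty}}$. But the operator $F\mapsto \nabla(-\Delta)^{-1}\nabla\!\cdot\!F$ is a Calder\'on--Zygmund singular integral and is \emph{not} bounded on $L^\infty$; the Green-function bound $|\nabla_x\nabla_\xi G(x,\xi)|\le C|x-\xi|^{-N}$ from \eqref{Holder-Green} is not integrable, so there is no estimate of the form $\|\nabla w\|_{L^\infty}\le C\|F\|_{L^\infty}$ for $-\Delta w=\nabla\!\cdot\!F$. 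Thus the absorption step fails as written. (A H\"older or BMO framework could rescue a version of this, but then you need a priori H\"older control of $\nabla w^t$, which is exactly what you are trying to prove.)

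The paper sidesteps this by \emph{not} perturbing: it invokes a $W^{1,\infty}$ estimate valid directly for variable-coefficient divergence-form equations $\nabla\!\cdot(a\nabla w)=f$ on convex polyhedra with $a\in W^{1,q}$, $q>N$ (cited from \cite{Li-Sun-MCOM}), transfers it to the curvilinear polyhedron $\Omega$ by localisation, and applies it to the pulled-back equation with $a=A^t$. Since $\|A^t\|_{W^{1,q}}$ is bounded uniformly in $t$, the constant is uniform. If you want to keep your structure, replace the freezing-coefficient step by such a direct variable-coefficient $W^{1,\infty}$ result.
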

\begin{proof}
In a standard convex polyhedron $\hat\Omega$, the following estimate holds for $p>N$ (cf. \cite[Lemma 2.1]{Li-Sun-MCOM}):
\begin{align}
	\|\nabla w\|_{L^{\infty}(\hat\Omega)} \leq C_{p}\|\nabla \cdot(a \nabla w)\|_{L^{p}(\hat\Omega)}
	\quad\forall\, w\in H^1_0(\hat\Omega) \,\,\,\mbox{such that}\,\,\, \nabla \cdot(a \nabla w)\in L^2(\hat\Omega) .
\end{align}
where $a=(a_{ij})$ is any symmetric positive definite matrix in $W^{1,q}(\hat\Omega)$ with $q>N$, satisfying the following estimate:
\begin{align}\label{coercivity-a}
C^{-1}|\xi|^2\le a\xi\cdot\xi \le C|\xi|^2 .
\end{align}

On the curvilinear polyhedron $\Omega$ considered in this article, by using a partition of unity we can reduce the problem to an open subset of $\Omega$ which is diffeomorphic to a convex polyhedral cone. Therefore, the following result still holds for $p>N$:
\begin{align}\label{grad-w-Linfty}
	\|\nabla w\|_{L^{\infty}(\Omega)} \leq C_{p}\|\nabla \cdot(a \nabla w)\|_{L^{p}(\Omega)}
	\quad\forall\, w\in H^1_0(\Omega)\,\,\,\mbox{such that}\,\,\, \nabla \cdot(a \nabla w)\in L^2(\Omega) .
\end{align} 

If there exists a smooth diffeomorphism $\Psi_t: \Omega \rightarrow \Omega^t$ (smooth uniformly with respect to $t\in [0,\delta]$), then we can pull the Poisson equation on $\Omega^t=\Psi_t(\Omega)$ back to the curvilinear polyhedron $\Omega$ as an elliptic equation with some coefficient matrix $a$ satisfying \eqref{coercivity-a}, and then use the result in \eqref{grad-w-Linfty}. This would prove \eqref{uniform-w2p}.  
If the partial derivatives of the diffeomorphism from $\Omega$ to $\Omega^t$ can be uniformly bounded with respect to $t\in[0,\delta]$, then the constant in  \eqref{uniform-w2p} is independent of $t\in [0,\delta]$. 

It remains to prove the existence of a smooth diffeomorphism $\Psi_t:\Omega\rightarrow \Omega^t=\Psi_t(\Omega)$. 
This is presented in the following lemma. 
\end{proof}

\begin{lemma}\label{flow-diffeo}
Let $\Omega$ be a curvilinear polyherdon. Then there exist constants $\delta>0$ and $\lambda>0$ (which only depend on $\Omega$), and a family of diffeomorphisms $\Psi_t:\mathbb{R}^N\rightarrow \mathbb{R}^N$ for $t\in [0,\delta]$, such that 
\begin{enumerate}
\item
$\Omega(\lambda t)\subseteq \Psi_t(\Omega)\subseteq  \Omega(\lambda^{-1} t)$ for $t\in [0,\delta]$ and some constant $\lambda>0$. 

\item
The partial derivatives of $\Psi_t$ are bounded uniformly with respect to $t\in[0,\delta]$, i.e., 
$$
|\nabla^k\Psi_t(x)|\le C_k\quad \forall x\in \mathbb{R}^N , \,\,\,\forall k\ge 1, 
\,\,\,\mbox{where $C_k$ is independent of $t\in[0,\delta]$.} 
$$

\end{enumerate}
\end{lemma}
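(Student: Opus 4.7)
The plan is to construct a smooth, compactly supported vector field $V$ on $\mathbb{R}^N$ that is \emph{uniformly outward-pointing} at every boundary point, and then define $\Psi_t$ as the time-$t$ flow of $V$. Smoothness of $\Psi_t$ and uniform bounds on $\nabla^k\Psi_t$ follow immediately from standard ODE theory, so the whole lemma reduces to producing $V$ together with the quantitative inclusion $\Omega(\lambda t)\subseteq\Psi_t(\Omega)\subseteq\Omega(\lambda^{-1}t)$.

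At a boundary point $y\in\partial\Omega$, let $\nu_1(y),\ldots,\nu_{m(y)}(y)$ denote the outward unit normals to all the smooth faces of $\partial\Omega$ meeting at $y$, and define the open cone of outward directions $\mathcal{O}_y:=\{v\in\mathbb{R}^N:v\cdot\nu_j(y)>0\ \text{for all}\ j\}$. Being an intersection of open half-spaces, $\mathcal{O}_y$ is convex. The hypotheses on $\Omega$ are exactly what makes $\mathcal{O}_y\neq\emptyset$ everywhere: at a smooth point $\mathcal{O}_y$ is a half-space; at an edge point the opening angle $<\pi$ ensures the two face normals admit a common strictly positive direction; at a vertex the convexity of the polyhedral tangent cone $K_y$ means by definition that the normals $\nu_j$ can be simultaneously strictly separated, i.e., $\mathcal{O}_y\neq\emptyset$. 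For each $x_0\in\partial\Omega$ fix $v_{x_0}\in\mathcal{O}_{x_0}$ with $v_{x_0}\cdot\nu_j(x_0)\geq c_{x_0}>0$ for all $j$. Because the face structure at a point $y$ near $x_0$ is coarser than at $x_0$ (nearby boundary points lie on a subset of the faces meeting at $x_0$, whose normals vary continuously), we can shrink the neighborhood $U_{x_0}$ so that $v_{x_0}\cdot\nu_j(y)\ge c_{x_0}/2$ for every $y\in U_{x_0}\cap\partial\Omega$ and every face normal $\nu_j(y)$ there. Extract a finite subcover $\{U_{x_i}\}_{i=1}^M$ of $\partial\Omega$ by compactness, take a subordinate smooth partition of unity $\{\chi_i\}$ on a neighborhood of $\partial\Omega$, and set $V:=\phi\sum_{i=1}^M\chi_i\,v_{x_i}$ with $\phi\in C_c^\infty(\mathbb{R}^N)$ equal to $1$ near $\overline\Omega$. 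Since $\mathcal{O}_y$ is \emph{convex}, the convex combination $V(y)=\sum_i\chi_i(y)v_{x_i}$ remains uniformly outward: $V(y)\cdot\nu_j(y)\ge c/2$ with $c=\min_i c_{x_i}>0$ for every $y\in\partial\Omega$ and every normal $\nu_j(y)$.

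Define $\Psi_t$ by $\partial_t\Psi_t(x)=V(\Psi_t(x))$, $\Psi_0=\mathrm{Id}$. Since $V\in C_c^\infty(\mathbb{R}^N;\mathbb{R}^N)$, $\Psi_t$ is a family of $C^\infty$ diffeomorphisms with $|\nabla^k\Psi_t|\le C_k$ uniformly in $t\in[0,\delta]$ for any fixed $\delta>0$. The upper inclusion $\Psi_t(\Omega)\subseteq\Omega(\lambda^{-1}t)$ is immediate from $|\Psi_t(x)-x|\le t\|V\|_{L^\infty}$, with $\lambda^{-1}:=\|V\|_{L^\infty}$. For the lower inclusion, one shows that for $\delta$ sufficiently small the uniform transversality $V\cdot\nu\ge c/4$ persists on a tubular neighborhood $\{x:\mathrm{dist}(x,\partial\Omega)\le\delta\|V\|_{L^\infty}\}$; then for any $z\in\Omega(\lambda t)\setminus\Omega$ the backward integral curve $s\mapsto\Psi_{-s}(z)$ decreases the signed distance to $\partial\Omega$ at rate at least $c/4$, so choosing $\lambda=c/8$ gives $\Psi_{-t}(z)\in\Omega$, i.e., $z\in\Psi_t(\Omega)$.

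The main obstacle is the convex-combination step: the set of outward directions at an edge or vertex need \emph{not} be convex without our geometric hypotheses (e.g., a reentrant edge with opening $>\pi$ produces a nonconvex outward cone, and averaging via a partition of unity could yield a vector pointing into $\Omega$). The assumption of edge openings $<\pi$ and convex vertex tangent cones is exactly what is needed to make $\mathcal{O}_y$ a nonempty open convex cone at every boundary point, allowing a globally smooth outward field to be built by convex combination. The remaining technicalities---continuity of face normals, uniform transversality across the overlaps of the cover, and the standard Grönwall bounds for the flow---are routine.
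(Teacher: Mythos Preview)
Your approach is essentially the same as the paper's: build a smooth compactly supported vector field that is uniformly outward at every boundary point via a partition of unity, and take $\Psi_t$ to be its flow. The paper packages the verification of the inclusions into a separate criterion (its Lemma~\ref{vector-cond}), which additionally asks that $X$ vanish on some $\Omega'\subset\subset\Omega$; this is used only for a short connectivity argument at the end and is automatic for a partition of unity supported near $\partial\Omega$.

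The one place where your argument diverges in execution is the lower inclusion $\Omega(\lambda t)\subseteq\Psi_t(\Omega)$. You argue directly with the signed distance function, claiming the backward flow decreases it at a uniform rate. This is morally right, but the signed distance to a curvilinear polyhedron is not differentiable at points whose nearest boundary point lies on an edge or vertex, so the chain-rule computation $\frac{d}{ds}d(\Psi_{-s}(z))=-V\cdot\nabla d$ needs justification there. (It can be salvaged: where it exists, $\nabla d$ lies in the normal cone, hence is a nonnegative combination $\sum\alpha_j\nu_j$ with $\sum\alpha_j\ge 1$, giving $V\cdot\nabla d\ge c\sum\alpha_j\ge c$; and one can use upper Dini derivatives in place of the classical derivative.) The paper sidesteps this entirely by pushing the flow through the local charts $\varphi_{x_\ell}$ to the flat polyhedral cones $K_{x_\ell}$, where the distance computation is elementary and the nonsmoothness is explicit and harmless. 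Your route is slightly more direct; the paper's is slightly cleaner at the singular set.
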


\begin{proof}
It is known that any given smooth and compactly supported vector field $X$ on $\mathbb{R}$ induces a flow map 
$$
\Psi:\mathbb{R}\times\mathbb{R}^N\to \mathbb{R}^N\quad (t,x)\mapsto \Phi(t,x),
$$
such that each $\Psi_t=\Psi(t,\cdot):\mathbb{R}^N\to \mathbb{R}^N$ is a diffeomorphism of $\mathbb{R}^N$ for sufficiently small $t$, say $|t|\le\delta$. Moreover, $\Psi_0={\rm Id}$, $\Psi_{t+s}=\Psi_t\circ\Psi_s$ for $t,s\in \mathbb{R}$, and the partial derivatives of $\Psi_t$ are uniformly bounded by constants which only depend on $X$ and $\delta$ (independent of $t$). 

Therefore, in order to prove Lemma \ref{flow-diffeo}, it suffices to construct a compactly supported smooth vector field $X$, such that the flow map induced by $X$ satisfies $\Omega(\lambda t)\subseteq \Psi_t(\Omega)\subseteq  \Omega(\lambda^{-1} t)$ for $t\in [0,\delta]$ (with some constants $\lambda>0$ and $\delta>0$). 
This can be proved by utilizing the following result, which provides a criteria for the construction of such a vector field. 

\begin{lemma}\label{vector-cond}
Let $\Omega$ be a curvilinear polyhedron, and let $X$ be a smooth and compactly supported vector field on $\mathbb{R}^N$ satisfying the following conditions:
\begin{enumerate}
	\item $X|_{\Omega'}\equiv 0$ for some nonempty open subset $\Omega'\subset\subset \Omega$.
	\item $	\langle X(x), N_x\rangle\ge c$ at all smooth points $ x\in \partial\Omega$, 
	where $N_x$ denotes the unit outward normal vector at $x\in\partial\Omega$ and $c>0$ is some constant. 
	
	\item $|X(x)|\le 1\quad\forall x\in \mathbb{R}^N$ 
\end{enumerate}
Then there are constants $\lambda>0$ and $\delta>0$, which only depend on $X$ and $\Omega$, such that the flow map $\Psi_t$ induced by the vector field $X$ has the following property: 
$$\Omega(\lambda t)\subseteq \Psi_t(\Omega)\subseteq  \Omega(\lambda^{-1} t) \quad\! \mbox{for}\,\,\, t\in [0,\delta].$$
\end{lemma}

Let us temporarily assume that Lemma \ref{vector-cond} holds, and use it to prove Lemma \ref{flow-diffeo}. To this end, it suffices to construct a vector field which satisfies the conditions in Lemma \ref{vector-cond}. 

From the definition of the curvilinear polyhedron we know that for every $x\in \partial\Omega$ there exists a map $\varphi_x:U_x\to B_{\varepsilon_x}(0)$ which is a diffeomorphism from a neighborhood $U_x$ of $x$ in $\R^N$ to a ball centered at $0$ with radius $\varepsilon_x$, such that $\varphi_x(x)=0$ and $\varphi_x(U_x\cap \Omega)= K_x\cap B_0(\varepsilon_x)$, where $K_x=\{y\in \R^3:y/|y|\in \Theta\}$ is a cone corresponding to a spherical region $\Theta\subset {\mathbb S}^2$ which is contained in an open half sphere, say ${\mathbb S}^2_+=\{x\in\R^3:|x|=1,\,\,x_3>0\}$. We shall use the following terminology:

\begin{enumerate}
	
	\item By composing $\varphi_x$ with an additional linear transformation if necessary, we can assume that $ \nabla\varphi_{x}(x)=I$ (which holds only at the point $x$ in $U_x$). 
	
	\item 
	If $p$ is a smooth point on $\partial K_x$ (not on the edges or vertex of $\partial K_x$), then we denote by $N_{x,p}$ the unit outward normal vector of $\partial K_x$ at $p$, and define 
	$$
	\hat N_x=\big\{N_{x,p}: \mbox{$p$ in some smooth piece of $\partial K_x$}\big\} 
	$$
	to be the set of all outward unit normal vectors on the smooth faces of $\partial K_x$. 
	When $x$ is a smooth point of $\partial\Omega$, $\hat N_x$ consists of only one vector, i.e., the usual unit normal vector $N_x$. Therefore, the set $\hat N_x$ can be viewed as generalization of normal vector at $x$ when $x$ is not a smooth point. 
	\item 
	Let $y$ be an interior point in the polyhedral cone $K_x$. Then the unit vector $V_x=-y/|y|$ satisfies that $\langle V_x,N_{x,p} \rangle>0$ for all $N_{x,p}\in \hat N_x$. 
%
\end{enumerate}

We will construct a smooth vector field $X$ on $\R^N$ as follows, by using a partition of unity. By the three properties above and the compactness of $\partial\Omega$, there is constant $c>0$ only dependent on $\Omega$ such that for each $x\in \partial\Omega$, there is a unit vector $V_x\in \mathbb{R}^N$ such that 
$$
\langle V_x,N_{x,p} \rangle\ge 2c \quad\forall N_{x,p}\in \hat N_x .
$$
Since the normal vector at a smooth point of $\partial\Omega$ changes continuously in a smooth piece of $\partial\Omega$, one can shrink the neighborhood $U_x$ of $x\in\partial\Omega$ so that   
$$
\langle V_x, N_y \rangle\ge c\,\,\,\mbox{for all smooth points $y\in \partial\Omega \cap U_x$},
$$
where $N_y$ denotes the unit outward normal vector at $y\in \partial\Omega \cap U_x$. 
We define a smooth vector field $X_x$ on $U_x$ by
$$
X_x(y)=V_x \quad\forall y\in U_x , 
$$
and choose a finite covering $\{U_{x_\ell}\}_{1\le \ell\le L}$ of $\partial\Omega$ from these $U_x$, $x\in \partial\Omega$, and a family of smooth cut-off functions $\{\chi_\ell\}_{1\le \ell\le L}$ such that $0\leq \chi_\ell\leq 1$ and 
$$
{\rm supp} (\chi_\ell)\subseteq U_{x_\ell} \;\mbox{ and}\;\sum_{1\leq \ell\leq L}\chi_\ell(x)=1, \quad \forall x\in \partial\Omega . 
$$
Then we denote by $X_{x_\ell}$ the above-mentioned vector field defined on $U_{x_\ell}$, and define 
$$X=\sum_{\ell=1}^L\chi_\ell X_{x_\ell} ,$$  
so that $X$ is a compactly supported smooth vector field such that 
$$
\langle X(y), N_y\rangle=\sum_{\chi_\ell(y)\neq 0}\chi_\ell(y)\langle X_{x_\ell},  N_y\rangle\geq c,
\quad
\mbox{for all smooth point $y\in \partial\Omega$} . 
$$
and clearly $|X(x)|\leq 1,\;\forall x\in \mathbb{R}^N$. 
This proves the existence of a desired vector field $X$, and therefore completes the proof of Proposition \ref{tabular-neighbor}. 
\end{proof}

\begin{proof}[Proof of Lemma \ref{vector-cond}]
For each $x\in \partial\Omega$, let $\varphi_x:U_x\to B_{\varepsilon_x}(0)$ be the map as in the definition of the curvilinear polyhedron. Here we do not require $\varphi_x(U_x)$ to be a ball so that we can assume $U_x$ to be convex. 

By composing $\varphi_x$ with an additional linear transformation if necessary, we can assume that $ \nabla\varphi_{x}(x)=I$ (as in the proof of Lemma \ref{flow-diffeo}). Since $ c\le \langle X(x), N_x \rangle\le 1$ (as a the condition in Lemma \ref{vector-cond}), we can shrink the neighborhood $U_x$ small enough so that 
\begin{align}\label{pre-YN}
\frac{c}{2}\le \langle(\nabla\varphi_{x}(y))^\top X(y), N_{x,p} \rangle\le 2 \quad \forall y\in U_x,\,\,\, p\in \varphi_x(U_x\cap\partial\Omega) = \varphi_x(U_x)\cap \partial K_x,\notag \\
\mbox{$p$ is a smooth point}.
\end{align}
Moreover, since $(\nabla\varphi_{x})^\top=I$ at $x$, we can shrink $U_x$ so that the following equivalence relation holds: 
$$
d(y_1, y_2) \sim d(\varphi_x(y_1),\varphi_x(y_2)) \quad\forall y_1,y_2\in U_x,
$$ 
where $d(\cdot,\cdot)$ denotes the Euclidean distance in $\mathbb{R}^N$. As a result, 
$$
d(y, U_x\cap \Omega) \sim d(\varphi_x(y), \varphi_x(U_x\cap\Omega))\quad\forall y\in U_x . 
$$ 
We can choose a finite covering $\{U_{x_\ell}\}_{1\le \ell\le L}$ of $\partial\Omega$ from these $U_x$. Then there exists a sufficiently small $\delta>0$ such that for any $x\in \partial\Omega$ there exists $1\le \ell\le L$ such that for all $t\in[0,\delta]$, 
$$
\Psi_t(x)\in U_{x_\ell}\,\,\,\mbox{for some $1\le \ell\le L$}  .
$$ 
Moreover, 
\begin{align}\label{equiv-dist1}
d(\Psi_t(x),\Omega)=d(\Psi_t(x),U_\ell\cap \Omega)  
\end{align}
and 
\begin{align}\label{equiv-dist2}
d(\Psi_t(x),\Omega) \sim d(\varphi_{x_\ell}(\Psi_t(x)),\varphi_{x_\ell}(U_\ell\cap \Omega)) . 
\end{align}
Let $Y_\ell=(\nabla\varphi_{x_\ell})^\top X|_{U_\ell} $ be the pushforward vector field under $\varphi_{x_\ell}$, then $\varphi_{x_\ell}(\Psi_t(x))$ is the integral curve of vector field $Y_\ell$, with initial value point $\varphi_{x_\ell}(x)$. From \eqref{pre-YN} we know that 
$$
\frac{c}{2}\le \langle Y_\ell(z), N_{x_\ell,p}\rangle\le 2\quad \forall z\in \varphi_{x_\ell}(U_{x_\ell}), \,\,\,\forall p\in \varphi_{x_\ell}(U_{x_\ell}\cap\partial\Omega) =\varphi_{x_\ell}(U_{x_\ell})\cap\partial K_{x_\ell} ,
$$
which implies that the integral curve $\varphi_{x_\ell}(\Psi_t(x))$ is flowing outside $\varphi_{x_\ell}(U_{x_\ell}\cap\Omega)$, i.e., 
$$
\frac{ct}{2} \le d(\varphi_{x_\ell}(\Psi_t(x)),\varphi_{x_\ell}(U_{x_\ell}\cap\Omega)) \le 2t .
$$
Then, from the equivalence of distance as shown in \eqref{equiv-dist1}--\eqref{equiv-dist2}, we conclude that there exists a constant $\lambda>0$ such that 
$$
2\lambda t\le d(\Psi_t(x),\Omega) \le \frac12\lambda^{-1} t \quad\forall t\in [0,\delta],\,\,\,\forall x\in\partial\Omega . 
$$

We consider the domain $\Omega(\lambda t):=\{x\in \mathbb{R}^N\;:\; {\rm dist}(x,\Omega)<\lambda t\} \supset\Omega$. 
On the one hand, since $X|_{\Omega'}=0$ for some subdomain $\Omega'\subset\subset\Omega$ it follows that $\Psi_t(\Omega)\cap\Omega(\lambda t)\neq \emptyset$. On the other hand, since $d(\Psi_t(x),\Omega)>\lambda t$ for all $x\in \partial\Omega$, the boundaries of $\Psi_t(\Omega)$ and $\Omega(\lambda t)$ are disjoint. It follows that $\Omega(\lambda t)\subseteq \Psi_t(\Omega)$ for $t\in [0,\delta]$. Similarly, one can prove that $\Omega(\lambda^{-1} t) \supset \Psi_t(\Omega)$. This completes the proof of Lemma \ref{vector-cond}. 
\end{proof}

\begin{lemma}\label{Linfty-auxi}
Let $\Omega^t$ be the domain in Proposition \ref{tabular-neighbor}, satisfying $\Omega(\lambda t)\subseteq\Omega^t \subseteq \Omega(\lambda^{-1} t)$ for $ t\in[0,\delta] $, with $\Omega(\lambda t)=\{x\in \mathbb{R}^N\;:\; {\rm dist}(x,\Omega)<\lambda t\} $. 
Suppose that $f\in L^p(\Omega^t)$ for some $p>N$, and $\Omega_h\subset \Omega^t$ for some $t=O(h^{r+1})$ and $h\le h_1$, where $h_1>0$ is some constant. Let $u\in H^1_0(\Omega)$ and $u^{(h)}\in H^1_0(\Omega_h)$ be the weak solutions of the following PDE problems{\rm:}
\begin{align*}
    & -\Delta u = f \quad\mbox{in}\,\,\,\Omega , \\
    &-\Delta u^{(h)} = f \quad\mbox{in}\,\,\,\Omega_h ,
\end{align*}
and extend $u$ and $ u^{(h)}$ by zero to the larger domain $\Omega^t$. 
Then there exists $h_2>0$ such that for $h\le h_2$ the following estimate holds: 
\begin{align}\label{ea:Linfty-auxi}
    \|u-u^{(h)}\|_{L^\infty(\Omega^t)}\leq Ch^{r+1}\|f\|_{L^p(\Omega^t)}
\end{align}
\end{lemma}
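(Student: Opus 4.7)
The plan is to introduce the auxiliary solution $u^t \in H^1_0(\Omega^t)$ of $-\Delta u^t = f$ on the enlarged domain $\Omega^t$, and pivot both $u$ and $u^{(h)}$ against it via the continuous maximum principle. Proposition~\ref{tabular-neighbor} provides the uniform regularity $\|u^t\|_{W^{1,\infty}(\Omega^t)} \le C_p\|f\|_{L^p(\Omega^t)}$ with constant independent of $t \in [0,\delta]$. Since $\Omega^t$ is a Lipschitz domain and $u^t$ vanishes on $\partial\Omega^t$, extension by zero produces a globally Lipschitz function on $\mathbb{R}^N$ with the same constant, giving the pointwise bound $|u^t(x)| \le C\|f\|_{L^p(\Omega^t)}\operatorname{dist}(x,\partial\Omega^t)$ for every $x \in \mathbb{R}^N$.

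The first step is the geometric observation that both $\partial\Omega$ and $\partial\Omega_h$ lie within $O(h^{r+1})$ of $\partial\Omega^t$. The inclusion $\Omega^t \subseteq \Omega(\lambda^{-1}t)$ forces $\operatorname{dist}(y,\partial\Omega^t) \le \lambda^{-1}t$ for every $y \in \partial\Omega$ (moving outward along any direction in the exterior cone at $y$, one must exit $\Omega(\lambda^{-1}t)$, and hence $\Omega^t$, within distance $\lambda^{-1}t$). Combined with the standard isoparametric bound $\operatorname{dist}(\partial\Omega_h,\partial\Omega) = O(h^{r+1})$ and the choice $t = O(h^{r+1})$, this yields
\begin{align*}
\|u^t\|_{L^\infty(\partial\Omega)} + \|u^t\|_{L^\infty(\partial\Omega_h)} \le Ch^{r+1}\|f\|_{L^p(\Omega^t)}.
\end{align*}

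The second step applies the classical maximum principle to the harmonic differences $u - u^t$ on $\Omega$ and $u^{(h)} - u^t$ on $\Omega_h$, both of which are continuous on the closures of their respective Lipschitz domains, giving
\begin{align*}
\|u - u^t\|_{L^\infty(\Omega)} + \|u^{(h)} - u^t\|_{L^\infty(\Omega_h)} \le Ch^{r+1}\|f\|_{L^p(\Omega^t)}.
\end{align*}

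The last step combines these estimates on the three regions $\Omega \cap \Omega_h$, $\Omega \triangle \Omega_h$, and $\Omega^t \setminus (\Omega \cup \Omega_h)$ of $\Omega^t$. On the intersection a triangle inequality against $u^t$ gives the claim directly; on the third region both extended functions vanish identically. On the symmetric difference, one of $u, u^{(h)}$ vanishes by the zero extension while the surviving term is controlled by its $u^t$-comparison plus $|u^t(x)|$; here the crucial point is that any $x \in \Omega \triangle \Omega_h$ sits in an $O(h^{r+1})$ tubular neighborhood of $\partial\Omega$, hence within $O(h^{r+1})$ of $\partial\Omega^t$, so the Lipschitz bound on $u^t$ yields $|u^t(x)| \le Ch^{r+1}\|f\|_{L^p(\Omega^t)}$. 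The main subtlety is establishing this distance control uniformly on $\Omega \triangle \Omega_h$, which rests precisely on the two-sided sandwich $\Omega(\lambda t) \subseteq \Omega^t \subseteq \Omega(\lambda^{-1}t)$ supplied by Proposition~\ref{tabular-neighbor}; everything else is a routine assembly.
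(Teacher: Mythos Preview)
Your proposal is correct and follows essentially the same route as the paper: introduce the auxiliary solution $u^t\in H^1_0(\Omega^t)$, invoke the uniform bound $\|u^t\|_{W^{1,\infty}(\Omega^t)}\le C_p\|f\|_{L^p(\Omega^t)}$ from Proposition~\ref{tabular-neighbor}, apply the continuous maximum principle to the harmonic differences $u-u^t$ and $u^{(h)}-u^t$, and close with the geometric fact that $\partial\Omega$, $\partial\Omega_h$, and $\Omega\triangle\Omega_h$ all sit within $O(h^{r+1})$ of $\partial\Omega^t$. The only cosmetic difference is bookkeeping: the paper bounds $\|u^t-u\|_{L^\infty(\Omega^t)}$ and $\|u^t-u^{(h)}\|_{L^\infty(\Omega^t)}$ separately and then triangulates, while you partition $\Omega^t$ into three regions; these are equivalent.
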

\begin{proof}
Since $\max\limits_{x\in\Omega_h}|\Phi_h(x)-x|\leq C_0h^{r+1}$ for some constant $C_0$, it follows that $\Omega_h\subset\Omega(C_0h^{r+1}) \subset\Omega^t$ for $t=C_0 \lambda^{-1}h^{r+1}$. When $h$ is sufficiently small we have $t=C_0 \lambda^{-1}h^{r+1}\le\delta$ and therefore $\Omega^t$ is well defined.  
Let $u^t\in H^1_0(\Omega^t)$ be a weak solution of the Poisson equation
\begin{align*}
    -\Delta u^t = f \quad\mbox{in}\,\,\,\Omega^t.
\end{align*}
Proposition \ref{tabular-neighbor} implies that  
\begin{align}
    \|u^t\|_{W^{1,\infty}(\Omega^t)}\leq C\|f\|_{L^p(\Omega^t)}  . 
\end{align}
Since $u^t-u$ is harmonic in $\Omega\subset\Omega^t$ and $u^t-u^{(h)}$ is harmonic in $\Omega_h\subset\Omega^t$, the maximum principle of the continuous problem implies that 
\begin{align}
    \|u^t-u^{(h)}\|_{L^\infty(\Omega_h)}&\leq \|u^t-u^{(h)}\|_{L^\infty(\partial\Omega_h)}\nonumber\\
    &=\|u^t\|_{L^\infty(\partial\Omega_h)} &&\mbox{(since $u^{(h)}=0$ on $\partial\Omega_h$)} \nonumber\\
    &\leq Ch^{r+1}\|u^t\|_{W^{1,\infty}(\Omega^t)}\nonumber\\
    &
    \leq Ch^{r+1}\|f\|_{L^p(\Omega^t)},
\end{align}
where we have used the fact that ${\rm dist}(x,\partial\Omega^t)\leq 2C_0h^{r+1}$ for $x\in \partial\Omega_h$. 
Therefore,
\begin{align}\label{u_delta_h-u(h)}
     \|u^t-u^{(h)}\|_{L^\infty(\Omega^t)}
     &\leq  \|u^t-u^{(h)}\|_{L^\infty(\Omega_h)}+ \|u^t\|_{L^\infty(\Omega^t\setminus \Omega_h)}\nonumber\\
     &\leq Ch^{r+1}\|f\|_{L^p(\Omega^t)}  + Ch^{r+1}\|u^t\|_{W^{1,\infty}(\Omega^t)} \notag \\
     &\leq Ch^{r+1}\|f\|_{L^p(\Omega^t)} .
\end{align}
The following result can be proved in the same way: 
\begin{align}\label{u_delta_h-u}
    \|u^t-u\|_{L^\infty(\Omega^t)}\leq Ch^{r+1}\|f\|_{L^p(\Omega^t)} .
\end{align}
The result of Lemma \ref{Linfty-auxi} follows from \eqref{u_delta_h-u(h)}--\eqref{u_delta_h-u} and the triangle inequality. 
\end{proof}

In the following, we prove Theorem \ref{THM2} by using the technical result in Proposition \ref{tabular-neighbor}. 

Let $\Omega^t$ be the domain in Proposition \ref{tabular-neighbor}, satisfying $\Omega(\lambda t)\subseteq\Omega^t \subseteq \Omega(\lambda^{-1} t)$ for $ t\in[0,\delta] $, with $\Omega(\lambda t)=\{x\in \mathbb{R}^N\;:\; {\rm dist}(x,\Omega)<\lambda t\} $. 
For the simplicity of notation, we still denote by $ f\in L^p(\Omega^t)$ an extension of $\tilde f\in L^p(\Omega\cup \Omega_h)$ satisfying 
$
\| f\|_{L^p(\Omega^t)} \le C\|\tilde f\|_{L^p(\Omega\cup \Omega_h)} \le C\| f\|_{L^p(\Omega)} .
$ 

Under assumption \ref{Assumption}, the curvilinear polyhedral domain $\Omega$ can be extended to a larger convex polyhedron $\Omega_*$ with a piecewise flat boundary such that $\overline\Omega\subset \Omega_*$ and the triangulation $\K$ can be extended to a quasi-uniform triangulation $\K_*$ on $\Omega_*$
(thus the triangulation in $\Omega_*\backslash\overline\Omega$ is also isoparametric on its boundary $\partial\Omega$).

Let $\tilde u$ be an extension of $u^{(h)}$ such that $\tilde u=u^{(h)}$ on $\Omega_h$ and $\tilde u=0$ in $\Omega_*\backslash\Omega_h$. 
Let $\mathring S_h(\Omega_*)\subset H^1_0(\Omega_*)$ be the $H^1$-conforming isoparametric finite element space on $\Omega_*$ with triangulation $\K_*$.
Let $\tilde u_h\in\ring S_h(\Omega_*)$ be the Ritz projection of $\tilde u$ defined by 
$$
\int_{\Omega_*} \nabla (\tilde u - \tilde u_h)\cdot\nabla \chi_h = 0 
\quad\forall\,\chi_h\in \ring S_h(\Omega_*) . 
$$
Then 
\begin{align}\label{uh-uh-1}
\|u^{(h)}-u_h\|_{L^\infty(\Omega_h)} 
&=\|\tilde u-u_h\|_{L^\infty(\Omega_h)} \notag\\
&\le \|\tilde u-\tilde u_h\|_{L^\infty(\Omega_h)} + \|\tilde u_h - u_h\|_{L^\infty(\Omega_h)} \notag\\
&\le \|\tilde u-\tilde u_h\|_{L^\infty(\Omega_*)} + \|\tilde u_h - u_h\|_{L^\infty(\Omega_h)} ,
\end{align}
where $\|\tilde u-\tilde u_h\|_{L^\infty(\Omega_*)}$ is the error of the Ritz projection of an $H^1$-conforming FEM in a standard convex polyhedron and therefore can be estimated by using the result on a standard convex polyhedron (or using the interior maximum-norm estimate as in \cite[Theorem 5.1]{Schatz-Wahlbin-1977} and \cite[Proof of Theorem 5.1]{Leykekhman_Li_2021}), i.e.,
\begin{align}\label{tildeu-uh-2}
\|\tilde u-\tilde u_h\|_{L^\infty(\Omega_*)} 
&\le C\ell_h\|\tilde u-I_h\tilde u\|_{L^\infty(\Omega_*)} \notag\\
&\le C\ell_h\| u^{(h)} - I_hu^{(h)} \|_{L^\infty(\Omega_h)} \notag\\
&\le C\ell_h\| u -I_hu \|_{L^\infty(\Omega_h)} +C\ell_hh^{r+1}\|f\|_{L^p(\Omega^t)},
\end{align}
where the last inequality uses the triangle inequality and \eqref{ea:Linfty-auxi}, and $I_h\widetilde{u}$ is the interpolation operator associated with the larger triangulation $\mathscr{K}_*$ which extends the interpolation operator $I_h:C(\overline{\Omega_h})\to S_h(\Omega_h)$ associated with $\mathscr{K}$. Since $\tilde u_h - u_h$ is discrete harmonic in $\Omega_h$, i.e.,
$$
\int_{\Omega_h} \nabla(\tilde u_h - u_h)\cdot\nabla \chi_h \d x 
= \int_{\Omega_h} \nabla(\tilde u - u^{(h)})\cdot\nabla \chi_h \d x 
= 0 \quad\forall\,\chi_h\in \mathring S_h(\Omega_h) ,
$$
it follows from Theorem \ref{THM1} that $\tilde u_h - u_h$ satisfies the discrete maximum principle, i.e.,
\begin{align}\label{uh-uh-3}
\|\tilde u_h - u_h\|_{L^\infty(\Omega_h)}
&\le C\|\tilde u_h - u_h\|_{L^\infty(\partial\Omega_h)} \notag\\
&= C\|\tilde u_h\|_{L^\infty(\partial\Omega_h)} \notag\\
&= C\|\tilde u_h-\tilde u\|_{L^\infty(\partial\Omega_h)}\quad\mbox{(since $\tilde{u}|_{\partial\Omega_{h}}=0$)} \notag\\
&\le C\|\widetilde{u}_h-\widetilde{u}\|_{L^\infty(\Omega_*)}.
\end{align} 
Substituting \eqref{tildeu-uh-2} and \eqref{uh-uh-3} into \eqref{uh-uh-1} yields 
\begin{align*}
\|u^{(h)}-u_h\|_{L^\infty(\Omega_h)} 
&\le C\ell_h\| u - I_hu \|_{L^\infty(\Omega_h)} +C\ell_hh^{r+1}\|f\|_{L^p(\Omega^t)} .
\end{align*}
Since $u^{(h)}=u_h=0$ in $\Omega\backslash\Omega_h$, it follows that 
\begin{align*}
\|u^{(h)}-u_h\|_{L^\infty(\Omega)} 
=\|u^{(h)}-u_h\|_{L^\infty(\Omega\cap\Omega_h)} 
&\le C\ell_h\| u - I_hu \|_{L^\infty(\Omega_h)} +C\ell_hh^{r+1}\|f\|_{L^p(\Omega^t)} .
\end{align*}
Then, combining this with \eqref{ea:Linfty-auxi}, we obtain the following error bound: 
\begin{align*}
\|u-u_h\|_{L^\infty(\Omega)} 
&\le C\ell_h\| u - I_hu \|_{L^\infty(\Omega_h)} +C\ell_hh^{r+1}\|f\|_{L^p(\Omega^t)} .
\end{align*}
Finally, we note that
\begin{align*}
	\|u-\check{I}_hu\|_{L^\infty(\Omega)}=&\|u\circ\Phi_h-I_h(u\circ \Phi_h)\|_{L^\infty(\Omega_h)}\\
	\geq& \|u-I_hu\|_{L^\infty(\Omega_h)}-C\|u-u\circ\Phi_h\|_{L^\infty(\Omega_h)}\\
	\geq& \|u-I_hu\|_{L^\infty(\Omega_h)}-C\|u\|_{W^{1,\infty}(\mathbb{R}^d)}\|\Phi_h-{\rm Id}\|_{L^\infty(\Omega_h)}\\
	\geq&\|u-I_hu\|_{L^\infty(\Omega_h)}-Ch^{r+1}\|u\|_{W^{1,\infty}(\mathbb{R}^d)}\\
	\geq& \|u-I_hu\|_{L^\infty(\Omega_h)}-Ch^{r+1}\|f\|_{L^p(\Omega^t)} .
\end{align*}
This proves the result of Theorem \ref{THM2}.
\qed

\section{Conclusion}\label{section:conclusion}

We have proved the weak maximum principle of the isoparametric FEM for the Poisson equation in curvilinear polyhedral domains with edge openings smaller than $\pi$, which include smooth domains and smooth deformations of convex polyhedra. The proof requires using a duality argument for an elliptic equation with some discontinuous coefficients arising from the use of isoparametric finite elements. Hence, the standard $H^2$ elliptic regularity does not hold for the solution of the corresponding  dual problem. We have overcome the difficulty by decomposing the solution into a smooth $H^2$ part and a nonsmooth $W^{1,p}$ part, separately, and replaced the $H^2$ regularity required in a standard duality argument by some $W^{1,p}$ estimates for the nonsmooth part of the solution. 

As an application of the weak maximum principle, we have proved an $L^\infty$-norm best approximation property of the isoparametric FEM for the Poisson equation. 
All the analysis for the Poisson equation in this article can be extended to elliptic equations with $W^{1,\infty}$ coefficients. However, the current analysis does not allow us to extend the results to curvilinear polyhedral domains with edge openings bigger than $\pi$ (smooth deformations of nonconvex polyhedra) or graded mesh in three dimensions. 
These would be the subject of future research. 

There are other approaches to the maximum principle of finite element methods for elliptic equations using non-obtuse meshes, which is restricted to piecewise linear finite elements and Poisson equation with constant coefficients; see \cite{Gao-Qiu-2021}. The approach in the current manuscript is applicable to elliptic equations with $W^{1,\infty}$ coefficients, general quasi-uniform meshes, and high-order finite elements, and therefore requires completely different analysis from the approaches using non-obtuse meshes.

\end{document}